\renewcommand{\subset}{\subseteq}
\newcommand{\vertiii}[1]{{\left\vert\kern-0.25ex\left\vert\kern-0.25ex\left\vert #1 
    \right\vert\kern-0.25ex\right\vert\kern-0.25ex\right\vert}}
\def\CC{\mathbb{C}}
\def\NN{\mathbb{N}}
\def\RR{\mathbb{R}}
\def\HH{\mathbb{H}}
\def\cB{ {\mathcal B} }
\def\cC{ {\mathcal C} }
\def\cE{ {\mathcal E} }
\def\cH{ {\mathcal H} }
\def\cI{ {\mathcal I} }
\def\cR{{ \mathcal R }}
\def\cS{{\mathcal S} }
\def\cT{{\mathcal T}}
\def\cW{{\mathcal W}}
\def\x{x}
\def\xs{\x^{\ast}}
\def\ax{\langle \x \rangle}
\def\axs{\langle \x, \xs \rangle}
\newcommand{\matof}[3]{M_{#1}(#2)^{#3}}
\newcommand{\lm}[1]{T(#1)}
\newcommand{\lc}[1]{\operatorname{lc}(#1)}
\newtheorem{theorem}            {Theorem}[section]
\newtheorem{thm}           [theorem]{Theorem}
\newtheorem{cor}          [theorem]{Corollary}
\newtheorem{prop}        [theorem]{Proposition}
\newtheorem{lem}              [theorem]{Lemma}
\theoremstyle{definition}
\newtheorem{example}            [theorem]{Example}
\newtheorem{rem}             [theorem]{Remark}
\def\beq{ \begin{equation}}
\def\eeq{  \end{equation} }
\def\bes{\begin{equation*}}
\def\ees{\end{equation*}}
\def\ben{\begin{enumerate} }
\def\een{\end{enumerate} }
\def\bmat{\begin{bmatrix}}
\def\emat{\end{bmatrix}}
\def\barr{\begin{array}}
\def\earr{\end{array}}
\def\bet{\begin{theorem}
}
\def\eet{\end{theorem}}
\newcommand{\df}[1]{{\bf{#1}}{\index{#1}}}
\def\i{\index}
\def\fin{\mbox{\tiny finite}}
\def\lead{\mathrm{lead}}
\def\hom{{\mathrm{hom}}}
\def\hard{\mathrm{hard}}
\def\soft{\mathrm{soft}}
\def\lft{\mathrm{left}}
\def\real{\mathrm{real}}
\newcommand{\rr}[1]{\sqrt[\real]{#1}}
\newcommand{\ihard}[1]{\cI_\hard(#1)}
\newcommand{\vhard}[1]{V_\hard(#1)}
\newcommand{\rhard}[1]{\sqrt[\hard]{#1}}
\newcommand{\ichard}[2]{\cI_\hard^#1(#2)}
\newcommand{\vchard}[2]{V_\hard^#1(#2)}
\newcommand{\rchard}[2]{\sqrt[#1-\hard]{#2}}
\newcommand{\ileft}[1]{\cI_\lft(#1)}
\newcommand{\vleft}[1]{V_\lft(#1)}
\newcommand{\rleft}[1]{\sqrt[\lft]{#1}}
\newcommand{\icleft}[2]{\cI_\lft^#1(#2)}
\newcommand{\vcleft}[2]{V_\lft^#1(#2)}
\newcommand{\rcleft}[2]{\sqrt[#1-\lft]{#2}}
\newcommand{\isoft}[1]{\cI_\soft(#1)}
\newcommand{\vsoft}[1]{V_\soft(#1)}
\newcommand{\rsoft}[1]{\sqrt[\soft]{#1}}
\def\chrisA{\mathcal A}
\def\F{\mathbb F}
\def\nss{Nullstellensatz}
\def\nsse{Nullstellens\" atze}
\def\FA{\mathbb \F \langle x,x^*\rangle}
\def\irr{\Pi_\mathrm{irr}}
\title[Real ideals in $*$-algebra]{Real Nullstellensatz and $*$-ideals in $*$-Algebras}
\author[Cimpri\v c, Helton, McCullough and Nelson]{Jakob Cimpri\v c${}^1,$ J. William Helton${}^2,$  Scott McCullough${}^3$ and Christopher Nelson${}^{2\dag}$}
\subjclass{16W10, 16S10, 16Z05, 14P99, 14A22, 47Lxx, 13J30}
\thanks{${}^1$Research supported by the grant P1--0222 from the Slovenian Research Agency – ARRS, ${}^2$Research supported by NSF grants
DMS-0700758, DMS-0757212, and the Ford Motor Co. and 
 ${}^3$Research supported by the NSF grant DMS-0758306.}
\begin{document}

\maketitle


\begin{abstract}
 Let $\F$ denote either $\RR$ or $\CC$. An ideal $I$ in the free $\ast$-algebra $\F\axs$ in $g$ freely
 noncommuting variables $\{x_1,\dots,x_g\}$ and their formal
 adjoints $\{x_1^*,\dots,x_g^*\}$ is a $\ast$-ideal if $I=I^*$.  When a 
 real $\ast$-ideal has finite codimension, it satisfies a strong
  Nullstellensatz. Without the finite codimension assumption, there are examples of such ideals
  which do not satisfy, very liberally interpreted, any Nullstellensatz.
  A polynomial $p\in\F\axs$ is analytic if it is a polynomial
  in the variables $x_j$ only; that is if $p\in\F\ax$. 
  As shown in this article,  $\ast$-ideals generated
  by analytic polynomials do satisfy a natural Nullstellensatz
  and those generated by homogeneous analytic polynomials 
  have a particularly simple description.   The article 
  also connects the results here for $\ast$-ideals to 
  the literature on Nullstellensatz for left ideals in $\ast$-algebras generally
  and in $\F\axs$ in particular. It also develops the
  concomitant general theory of $\ast$-ideals in general $\ast$-algebras. 
\end{abstract}

\section{Introduction}
Let $\F$ be either $\RR$ or $\CC$ with complex conjugation as involution.
Let $\axs$ be the monoid freely generated by $\x=(x_1,\ldots, x_g)$ and 
$\xs=(x_1^*,\ldots,x_g^*)$, i.e., $\axs$ consists of  \df{words} in the $2g$
noncommuting letters $x_{1},\ldots,x_{g},x_1^*,\ldots,x_g^*$
(including the empty word $\emptyset$ which plays the role of the identity $1$).
Let $\FA$ denote the $\F$-algebra freely generated by $\x,\xs$, i.e., the elements of $\FA$
are \df{polynomials} in the noncommuting variables $\x,\xs$ with coefficients in $\F$.  
Equivalently, $\FA$ is the \df{free $\ast$-algebra} on $\x$.   Elements of the
 free algebra $\F\ax$ generated by $\x=(x_1,\dots,\x_g)$ are known
 as \df{analytic polynomials}.  A polynomial is \df{homogeneous}
  if it is an $\F$ linear combination of words of the same length. 

A left ideal $I\subseteq\F\axs$ is a $\ast$-\df{ideal} if $I^*=I$ and it is not hard to see that
such an ideal must also be a two-sided ideal. 
For $*$-ideals in $\FA$, there is a major distinction between those
that have finite codimension and those that have not.
In the first case there is a very strong real \nss\
whereas in the second case even a very weak version of real \nss\ fails in general.
In the positive direction, we show, independent of any finiteness hypotheses,
  that if the $\ast$-ideal 
 is generated by analytic polynomials, then it automatically satisfies
 a natural Nullstellensatz; and if it generated by analytic homogeneous
  polynomials, then it  has an especially simple representation.

 The body of the paper is organized as follows.  
 Section \ref{sec:realIdeals} presents basic properties of real ideals,
 including the Nullstellensatz in the case the real $\ast$-ideal $I$ has finite codimension. 
 Negative examples which illustrate that, absent additional hypotheses on $I$,
 a general Nullstellensatz is problematic are in Section \ref{sec:mainex}. There
  too is some needed additional theory applicable to general $\ast$-algebras. 
  The results for $\ast$-ideals generated by analytic polynomials
  are in Section \ref{sec:analytic}.   The articles \cite{chmn} and \cite{chkmn} gave real \nss\
for left ideals in $\FA$ and more general (not necessarily commutative, but associative) $*$-algebras.
 The relationship between the Nullstellensatz  in those papers for left ideals and
 those in this article for $\ast$-ideals are outlined in Section \ref{sec:leftZeroes}.
 Section \ref{sec:softZeroes} contains results for soft zero sets,
 namely, for $\{X \colon \det p(X) =0 \}$.

In the remainder of this introduction, we state our main results precisely, introducing notations 
and terminology as needed.  The focus initially is on the concrete example $\FA$.

\subsection{Ideals in $\FA$}
 \label{sec:idealsinFA}
Let $\chrisA$ be a unital associative $\F$-algebra with involution $\ast$, or \df{$\ast$-algebra} for short. 
A left ideal in the $*$-algebra $\chrisA$ is \df{real} if  $a_1,\dots,a_n \in \chrisA$ and 
\[
 \sum a_j^* a_j \in I+I^*
\]
implies $a_j\in I$ for each $j$. 
 A two-sided ideal is real if it is real as a left ideal.
 Moreover, as seen in Lemma \ref{lem:zi}\eqref{it:zi1}, a two-sided real ideal is in fact a $\ast$-ideal.

The \df{real radical},   denoted $\rr{I}$ of  a left ideal $I$
\i{$\rr{I}$ }
is the intersection of all real left ideals containing $I$ or equivalently, the smallest real left ideal containing $I$.  
The real radical of a $\ast$-ideal is also a $\ast$-ideal (see Lemma \ref{lem:zi}\eqref{it:zi2}).
 
When $\chrisA =\FA$ there is a natural way to generate real $*$-ideals.  Given a positive integer $n$, let
$\matof{n}{\F}{g}$ denote the set of $g$-tuples $X=(X_1,\dots,X_g)$ of $n\times n$ matrices. 
Let $\matof{}{\F}{g}$ denote the graded set $(\matof{n}{\F}{g})_n.$
An element $p\in \FA$ is naturally evaluated at $X$ by substituting  $X_j$ for $x_j$ and the 
adjoint $X_j^*$ for $x_j^*$ with the result $p(X)$ being  an $n\times n$ matrix.
We say that $X$ is a \df{(hard) zero} of $p$ if $p(X)=0$.

Given a sequence $S=(S_n)_n$  of subsets $S_n$ of $\matof{n}{\F}{g}$, define its \df{hard vanishing set}
\[
 \ihard{S} = \{p\in\FA \colon p(X)=0 \mbox{ for every } n
  \mbox{ and every } X\in S_n \}.
\]
\index{$ \ihard{S} $}
 It is easily checked that $\ihard{S}$ is indeed  a real $*$-ideal.
 Moreover, if $S$ is a finite set, then the dimension of
  $\FA/\ihard{S}$ is finite. 
  
  The connection with \nsse\ is the following.  
  The \df{hard variety} \i{$\vhard{I} $}
  $\vhard{I}=(\vhard{I}_n)$ of an ideal $I$ in $\FA$
  is the sequence
\[
  \vhard{I}_n =\{X\in\matof{n}{\F}{g} \colon p(X)=0 \mbox{ for every } p\in I\}.
\]
  The \df{hard radical} of $I$ is
   \i{$\rhard{I}$}
\[
 \rhard{I} = \ihard{\vhard{I}},
\]
 which is necessarily a $*$-ideal. Finally, 
  the $*$-ideal has the \df{\nss \  property} if
\[
   \rhard{I}  = I
\]
  and $I$ satisfies the \df{real \nss \ } if 
\[
 \rhard{I}=\rr{I}.
\]
  
 When the codimension of $I$ in $\F\axs$ if finite, 
 the relation between real Nullstellensatz and real ideals
 is clean, readily described and essentially a consequence
  of the existing theory of formally real $\ast$-algebras.

\begin{prop} 
\label{thm:mainfindim}
 Let $I \subset \F\axs$ be a two-sided ideal.
\begin{enumerate}[(i)]
\item\label{it:finGen}
$I$ is finitely generated as a left ideal if and only if either $I = \{0\}$
 or $I$ has finite codimension in $\F\axs$. 
 \item 
  \label{it:big}
  If I is a  $*$-ideal, 
 then $I$ is real and  $0 < \dim(\F\axs/I) < \infty$
 if and only if there exists
 an $n \in \NN$ and a $X \in
\matof{n}{\F}{g}$ such that $I = \ihard{\{X\}}$.
  In particular, if $I$ is an ideal in $\FA$
  and $0<\dim(\F\axs/\rr{I})<\infty$, then
  $\rhard{I}=\rr{I}.$
\end{enumerate}
\end{prop}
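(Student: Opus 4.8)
The proposition splits into two largely independent parts, the last assertion of part~(ii) being a formal consequence of the rest. The plan is to read part~(i) as a statement about the underlying free associative algebra on the $2g$ letters $x_1,\dots,x_g,x_1^*,\dots,x_g^*$, and to deduce part~(ii) from the classical structure theory of finite-dimensional formally real $\ast$-algebras (Wedderburn together with positivity), which is the ``existing theory'' referred to above.

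For part~(i): the implication ``$I=\{0\}$ or $\dim(\F\axs/I)<\infty$ $\Rightarrow$ $I$ finitely generated as a left ideal'' is a direct construction. Assuming $\dim(\F\axs/I)=d<\infty$, pick words $w_1=\emptyset,w_2,\dots,w_d$ whose images form a basis of $\F\axs/I$; this is possible because words span and, once the words of length $\le\ell$ surject onto $\F\axs/I$, so do those of length $\le\ell$ after one more left multiplication by a generator. For each generator $y$ and each $w_j$ write $yw_j\equiv\sum_l c^{\,y}_{jl}w_l\pmod I$, and let $N$ be the left ideal generated by the resulting $2gd$ elements $yw_j-\sum_l c^{\,y}_{jl}w_l$. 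Then $N\subseteq I$; the span of $w_1,\dots,w_d$ in $\F\axs/N$ contains $1$ and is stable under left multiplication by every generator, hence is all of $\F\axs/N$, so $\dim(\F\axs/N)\le d$, and $N\subseteq I$ forces $N=I$. The converse — a nonzero two-sided ideal finitely generated as a left ideal has finite codimension — is the substantive half, and the main obstacle overall. Here I would invoke that $\F\axs$, being a free associative algebra on finitely many generators, is a free ideal ring, so a finitely generated left ideal $I$ is free as a left module, $I\cong(\F\axs)^m$; two-sidedness makes $I$ a sub-bimodule, and comparing Hilbert series forces a Kraft-type constraint on the degrees of a homogeneous basis. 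In the graded case this pins $I$ down to a power of the augmentation ideal, hence of finite codimension; the general case is reduced to the graded one by passing to leading terms. That reduction is the delicate point (alternatively one may simply quote the statement as a known property of free algebras).

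For part~(ii): the ``if'' direction is immediate from facts already recorded — $\ihard{\{X\}}$ is a real $\ast$-ideal, $\F\axs/\ihard{\{X\}}$ is finite-dimensional because $\{X\}$ is a finite set, and its dimension is positive since $1\notin\ihard{\{X\}}$. For ``only if'', suppose $I$ is a real $\ast$-ideal with $0<\dim(\F\axs/I)<\infty$ and put $A:=\F\axs/I$. As $I=I^*$, $A$ is a finite-dimensional $\ast$-algebra over $\F$; as $I$ is real, $A$ is \emph{formally real}, i.e.\ $\sum_j a_j^*a_j=0$ in $A$ forces every $a_j=0$. If $J=\operatorname{rad}A\ne0$, a nonzero element $r$ of its top nonvanishing power $J^t$ (with $t\ge1$, and $J$ being $\ast$-invariant) would satisfy $r^*r\in J^tJ^t=J^{2t}=0$, contradicting formal reality, so $A$ is semisimple; and formal reality also excludes the exchange-type $\ast$-simple summands. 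Hence $A$ is a product of matrix algebras over real division rings (over $\C$ when $\F=\C$) carrying positive involutions, and therefore admits a $\ast$-embedding $A\hookrightarrow M_n(\F)$ for some $n$, with $M_n(\F)$ given its standard involution. The composite $\rho\colon\F\axs\twoheadrightarrow A\hookrightarrow M_n(\F)$ is a unital $\ast$-homomorphism, so $\rho(x_j^*)=\rho(x_j)^*$, whence $\rho$ is evaluation at $X=(\rho(x_1),\dots,\rho(x_g))\in\matof{n}{\F}{g}$; since the embedding is injective, $\ker\rho=I$, while $\ker\rho=\{p\in\F\axs\colon p(X)=0\}=\ihard{\{X\}}$, so $I=\ihard{\{X\}}$.

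For the final ``in particular'': given a two-sided ideal $I$ with $0<\dim(\F\axs/\rr I)<\infty$, the real radical $\rr I$ is a real $\ast$-ideal (real by definition, a $\ast$-ideal by Lemma~\ref{lem:zi}) of finite and positive codimension, so by part~(ii) there is an $X$ with $\rr I=\ihard{\{X\}}$. The Galois connection between $\ihard{\,\cdot\,}$ and $\vhard{\,\cdot\,}$ then gives $\rhard{\rr I}=\ihard{\vhard{\ihard{\{X\}}}}=\ihard{\{X\}}=\rr I$. Since $I\subseteq\rr I$, we get $\rhard I=\ihard{\vhard I}\subseteq\ihard{\vhard{\rr I}}=\rhard{\rr I}=\rr I$; and $\rhard I$ is itself a real $\ast$-ideal containing $I$, so $\rr I\subseteq\rhard I$ by minimality of $\rr I$. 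Hence $\rhard I=\rr I$. Thus the only genuinely hard input is the converse in part~(i); part~(ii) and its corollary are an application of the established theory of finite-dimensional formally real $\ast$-algebras.
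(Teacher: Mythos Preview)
Your treatment of part~(ii) and the final ``in particular'' is correct and follows the same route as the paper: the quotient $\F\axs/I$ is a finite-dimensional formally real $\ast$-algebra, hence semisimple with each simple factor carrying a positive involution, hence $\ast$-embeddable in some $M_n(\F)$. This is precisely the paper's Proposition~\ref{structure} and Corollary~\ref{cor:ihardx}. Your semisimplicity argument via $r^*r\in J^{2t}=0$ is a clean alternative to citing \cite{munn}, and your Galois-connection derivation of the ``in particular'' is more explicit than the paper's. For the easy direction of part~(i) you give a direct construction where the paper simply cites \cite{lewin}; both are fine.

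The genuine gap is the converse direction of part~(i). Your outline (free ideal ring, so $I$ is free of finite rank; then Hilbert-series/Kraft constraints) never makes explicit how two-sidedness enters, and the claim that in the graded case $I$ is forced to be a \emph{power of the augmentation ideal} is false. With $g=1$ (so $\F\axs$ is free on the two letters $x,x^*$), the homogeneous two-sided ideal $I=\F x + \operatorname{span}\{\text{words of length}\ge 2\}$ is finitely generated as a left ideal (by $x$, $xx^*$ and $x^*x^*$) but lies strictly between the augmentation ideal and its square. The paper's argument is concrete and short. With $d$ the maximal degree of a left generating set, decompose $\F\axs_d^{\hom}=I_d^{\lead}\oplus V_d^{\hom}$; the key identity, quoted from \cite{chmn}, is $I_{d+e}^{\lead}=\F\axs_e^{\hom}\,I_d^{\lead}$ for all $e\ge 0$. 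If some nonzero $\nu\in V_d^{\hom}$ existed, pick any nonzero $\iota\in I$ of degree $e$; by \emph{two-sidedness} $\iota\nu\in I$, so its leading part $\iota^{\lead}\nu$ lies in $\F\axs_e^{\hom}V_d^{\hom}\cap I_{d+e}^{\lead}=\F\axs_e^{\hom}V_d^{\hom}\cap\F\axs_e^{\hom}I_d^{\lead}=\{0\}$, contradicting $\iota^{\lead}\nu\ne 0$ in a domain. Hence $V_d^{\hom}=0$, every degree-$d$ monomial is a leading term from $I$, and $\dim(\F\axs/I)\le\dim\F\axs_{d-1}<\infty$. The decisive move---right-multiplying an element of $I$ by $\nu$ to force its leading term into an impossible location---is exactly the use of two-sidedness that your Hilbert-series sketch is missing.
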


\begin{rem}
 Thus if a $*$-ideal $I$ has finite codimension, then 
  $I$ has the Nullstellensatz property if and only if 
  it is real.

  The article contains two proofs of Proposition \ref{thm:mainfindim}. 
  The first is based on a standard applications of the well developed theory of formally real
  $\ast$-algebras. Another proof is an easy consequence of the theory developed here
  in Section \ref{sec:leftZeroes}
  to connect Nullstellensatz for left ideals for those for two-sided and $\ast$-ideals. 
\qed \end{rem}

\subsection{The Case of Infinite Codimension} 
 \label{sec:infinitecodim} 
   The \df{(free) Toeplitz algebra} $\cT$ is the quotient of $\F\langle x,x^*\rangle$ 
  by the $\ast$-ideal $I$ generated by $1-x^\ast x$. 
  It turns out  $I$ is real but $1-xx^\ast \in\rhard{I} \setminus I$
  and hence $I$ does not satisfy the  \nss\ property.
  Thus $I$ provides an example
  which shows that the finite codimension hypothesis is needed in
  Proposition \ref{thm:mainfindim} \eqref{it:big}.  
  The details can be found in Section \ref{sec:mainex}.
  A more elaborate example provided by
  Proposition \ref{prop:jaka} below shows that there are
  real ideals $I$ in $\FA$ which, even with a most liberal interpretation,
  do not satisfy a Nullstellensatz.

In spite of these negative examples, the main results of this article 
show that natural Nullstellensatze hold for ideals generated by
analytic polynomials, thus providing optimism that a satisfying
general theory may emerge.

\begin{cor}
\label{cor:genByAnalReal1}
If $I \subset \F\axs$ is a $\ast$-ideal generated by
analytic polynomials, then 
$$\rr{I} = I.$$
 In particular, $I$ is real. 
\end{cor}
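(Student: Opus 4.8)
The first move is to recast the claim. Since $\rr{I}$ is the smallest real left ideal containing $I$, the equality $\rr{I}=I$ holds if and only if $I$ is real; and because $I$ is a $\ast$-ideal, $I+I^*=I$, so ``$I$ real'' means exactly that $\sum_{j=1}^n a_j^*a_j\in I$ forces every $a_j\in I$. Equivalently, the $\ast$-algebra $A:=\F\axs/I$ admits no nonzero solution to $\sum_j\bar a_j^*\bar a_j=0$; this is what I would establish.

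The point is that the analytic hypothesis forces a very rigid description of $A$. If $S\subseteq\F\ax$ is a set of analytic generators of $I$, then the two-sided ideal generated by $S\cup S^*$ is already $\ast$-closed, hence equals $I$, so $I=L+L^*$ with $L$ the two-sided ideal of $\F\axs$ generated by $S$. Regarding $\F\axs$ as the free product $\F\ax\ast_{\F}\F\langle x^*\rangle$ of $\F$-algebras, and using that a free product modulo the ideal generated by ideals of the two factors is the free product of the quotients, one identifies $A$ with $B\ast_{\F}C$, where $B:=\F\ax/J$ ($J$ the two-sided ideal of $\F\ax$ generated by $S$), where $C$ is the copy generated by the images of the $x_j^*$ (anti-isomorphic to $B$), and --- the feature that makes the argument run --- the involution of $A$ is the ``flip'' anti-automorphism interchanging the free factors $B$ and $C$. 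I would then fix a linear basis $\mathcal{B}_1$ of a complement of $\F 1$ in $B$, put $\mathcal{B}_2:=\mathcal{B}_1^*\subseteq C$, and use the standard basis of the free product: the reduced words whose consecutive letters alternate between $\mathcal{B}_1$ and $\mathcal{B}_2$, together with $1$. This basis splits $A$ into word-length pieces, and the adjoint of a reduced word of length $d$ is again a reduced word of length $d$.

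The core is then a single leading-word computation in $A=B\ast_{\F}C$. Given $\sum_j\bar a_j^*\bar a_j=0$ with not all $\bar a_j=0$, let $L$ be the largest word-length occurring in any $\bar a_j$ and $\alpha_j=\sum_{|w|=L}c_w^{(j)}\,w$ the length-$L$ part of $\bar a_j$. Extracting the length-$2L$ part of $\sum_j\bar a_j^*\bar a_j$ leaves $\sum_j$(length-$2L$ part of $\alpha_j^*\alpha_j$), since the cross terms with the lower-order part of $\bar a_j$ have length $<2L$. In $\alpha_j^*\alpha_j=\sum_{w,w'}\overline{c_w^{(j)}}\,c_{w'}^{(j)}\,w^*w'$ the only terms of full length $2L$ are the concatenations $w^*w'$ for which $w$ and $w'$ begin in the same free factor --- if they begin in different factors the join of $w^*$ and $w'$ merges and the word collapses to shorter length --- and these length-$2L$ words $w^*w'$ are pairwise distinct as $(w,w')$ runs over such pairs, with $w=w'$ contributing the coefficient $\sum_j|c_w^{(j)}|^2$. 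Vanishing of the length-$2L$ part therefore forces $\sum_j|c_w^{(j)}|^2=0$ for each length-$L$ word $w$, hence $c_w^{(j)}=0$ for all $j$ and $w$; thus every $\bar a_j$ has length $<L$, contradicting the choice of $L$ (the case $L=0$, in which the $\bar a_j$ are scalars and $\sum_j|\bar a_j|^2=0$, is immediate). Hence all $\bar a_j=0$, $A$ has the required property, and $\rr{I}=I$.

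I expect the only genuinely delicate point to be the bookkeeping in the previous paragraph: verifying carefully that, when $w$ and $w'$ begin in the same factor, no merging occurs at the junction of $w^*$ and $w'$ and that the resulting full-length reduced words are genuinely distinct --- routine normal-form manipulation in a free product of algebras, but it must be done precisely. The identification $A\cong B\ast_{\F}C$ is standard once one knows that $\F\ax$ injects into $A$ with image $B$. No analysis intervenes anywhere; the analytic hypothesis is used exactly in obtaining the free-product description of $A$, and it is essential: for instance the $\ast$-ideal $I_0$ generated by the non-analytic polynomial $x_1^*x_1$ satisfies $x_1^*x_1\in I_0$ yet $x_1\notin I_0$, so is not real, and the deeper failures (such as $\rhard{I}\neq I$ for the free Toeplitz ideal) are described in Section~\ref{sec:mainex} and Proposition~\ref{thm:mainfindim}\eqref{it:big}.
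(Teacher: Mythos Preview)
Your argument is correct. The identification $\F\axs/I\cong B\ast_{\F}C$ with the flip involution is exactly right (it follows from the universal property of free products, since the two-sided ideal of $\F\axs$ generated by $S\cup S^*$ coincides with the one generated by $J\cup J^*$), and the leading-word computation in the normal form basis of the free product goes through as you describe: the diagonal length-$2L$ terms $w^*w$ are distinct from all other length-$2L$ terms, forcing $\sum_j|c_w^{(j)}|^2=0$. One minor expository point: your aside ``once one knows that $\F\ax$ injects into $A$ with image $B$'' has the logic reversed --- the injection of $B$ into $A$ is a \emph{consequence} of the free-product identification, not an input --- but this does not affect the proof.

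The paper proceeds quite differently. Rather than proving formal realness directly, it establishes the stronger Corollary~\ref{cor:genByAnalReal2} (that $\rr{I}=\rchard{\cR}{I}=I$) by first developing Gr\"obner basis machinery (Proposition~\ref{prop:GBofAnStarIdeal}) and then using it to construct an explicit positive hermitian functional $L$ with $I=\{a:L(a^*a)=0\}$ (Proposition~\ref{prop:analLinFun}), so that $\F\axs/I$ is $O^*$-representable via GNS. Your route is more elementary and self-contained for the statement at hand, avoiding both Gr\"obner bases and the inductive functional construction; the paper's route, while heavier, yields the extra conclusion that $\F\axs/I$ has a faithful $\ast$-representation, which is what feeds into the later Nullstellensatz results such as Proposition~\ref{prop:homAnalHard}. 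It is worth noting that your free-product normal form and the paper's Proposition~\ref{prop:GBofAnStarIdeal} are two packagings of the same structural fact about $I$.
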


\begin{proof}
This corollary  is an immediate consequence of Corollary \ref{cor:genByAnalReal2}.
\end{proof}

\begin{thm}
\label{thm:homAnalHilb}
 If  $I \subsetneq \F\axs$ is a homogeneous $\ast$-ideal generated by
 analytic polynomials, then there exists a Hilbert space $\cH$ and a tuple of bounded
 operators $X$ on $\cH$ such that $p(X) = 0$ if and only if $p \in I$. 
 Thus in an operator theoretic sense $I=\ihard{X}$. 
 In particular, $I$ has the Nullstellensatz property.
\end{thm}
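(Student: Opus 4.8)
The plan is to reduce the theorem to the assertion that the quotient $\ast$-algebra $A:=\F\axs/I$ has a separating family of finite-dimensional $\ast$-representations, and then to glue that family into one bounded tuple. First record the structure of $A$ (writing $\bar a$ for the image in $A$ of $a\in\F\axs$). Since $I$ is homogeneous and $I\subsetneq\F\axs$, the algebra $A=\bigoplus_{n\ge 0}A_n$ is a finitely generated graded $\ast$-algebra with $A_0=\F$ and with the involution preserving degree; and since $I$ is real by Corollary \ref{cor:genByAnalReal1}, $A$ is a \emph{real} $\ast$-algebra, i.e. $\sum_i a_i^*a_i=0$ forces each $a_i=0$. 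Moreover, letting $J\subseteq\F\langle x\rangle$ be the (homogeneous) ideal generated by the analytic generators of $I$, one has $\F\axs=\F\langle x\rangle\ast_\F\F\langle x^*\rangle$ and hence $A\cong C_+\ast_\F C_-$, where $C_+=\F\langle x\rangle/J$, $C_-=\F\langle x^*\rangle/J^*$, and the involution interchanges the two free factors; in particular a finite-dimensional $\ast$-representation of $A$ is exactly a finite-dimensional algebra representation $\sigma$ of $C_+$ (the action of $C_-$ being recovered by $x_j^*\mapsto\sigma(x_j)^*$), and such a thing is precisely a point of $\vhard{I}$.

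The crux is therefore to prove that $A\cong C_+\ast_\F C_-$ is residually finite-dimensional, i.e. that the kernels of its finite-dimensional $\ast$-representations meet in $\{0\}$; this is exactly the statement $\rhard{I}=I$. I would argue in two stages. First, the analytic factor $C_+$ is residually finite-dimensional on its own: being graded with $C_+^0=\F$, it is the inverse limit of its finite-dimensional truncations $C_+/C_+^{\ge d}$, whose left regular representations are faithful and jointly separate $C_+$. Second — and this is the main obstacle — one must push this through the free product: show that the adjoint-extensions of a separating family of finite-dimensional representations of $C_+$ remain jointly separating on $A=C_+\ast_\F C_-$. The delicate point is that extending a representation $\sigma$ of $C_+$ by $x_j^*\mapsto\sigma(x_j)^*$ could a priori force relations that are absent in $A$; ruling this out is where the analytic–homogeneous hypothesis does real work, and the argument is in the spirit of Exel–Loring's theorem that full free products of residually finite-dimensional C*-algebras are again residually finite-dimensional, adapted to this non-$C^*$, involution-twisted setting (and presumably leaning on the machinery built earlier in this section).

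Granting $\rhard{I}=I$, the Hilbert space is assembled easily, and here homogeneity is used a second time. Enumerate a separating family of finite-dimensional $\ast$-representations $\rho_\alpha\colon A\to B(\cH_\alpha)$. Because $I$ is homogeneous, for every $t>0$ the rescaling $\rho_\alpha^{(t)}$ determined by $\bar x_j\mapsto t\,\rho_\alpha(\bar x_j)$ (so $\bar x_j^*\mapsto t\,\rho_\alpha(\bar x_j)^*$) still annihilates $I$ and is again a $\ast$-representation, which multiplies the $n$-homogeneous part of $A$ by $t^n$. Choose for each $\alpha$ a positive null sequence $t_{\alpha,1},t_{\alpha,2},\dots$ of distinct numbers small enough that every generator becomes contractive under each $\rho_\alpha^{(t_{\alpha,m})}$; the enlarged family $\{\rho_\alpha^{(t_{\alpha,m})}\}$ is still jointly faithful, since for fixed $\alpha$ an element of $\bigcap_m\ker\rho_\alpha^{(t_{\alpha,m})}$ yields a matrix-valued polynomial $t\mapsto\sum_n t^n\rho_\alpha(\bar p_n)$ vanishing at infinitely many points, whence $\rho_\alpha(\bar p_n)=0$ for all $n$ and $\bar p\in\ker\rho_\alpha$. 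Put $\cH:=\bigoplus_{\alpha,m}\cH_\alpha$ and $X_j:=\bigoplus_{\alpha,m}\rho_\alpha^{(t_{\alpha,m})}(\bar x_j)$, a bounded operator with $\|X_j\|\le 1$. Then $p(X)=\bigoplus_{\alpha,m}\rho_\alpha^{(t_{\alpha,m})}(p+I)$ for $p\in\F\axs$, so $p(X)=0$ iff $p\in I$, which is the claimed operator-theoretic identity $I=\ihard{X}$; and since each tuple $\rho_\alpha^{(t_{\alpha,m})}(\bar x)$ is a matrix point of $\vhard{I}$, any $p\in\rhard{I}$ vanishes at all of them, hence $p(X)=0$ and $p\in I$, giving back $\rhard{I}=I$. (When $I=\{0\}$ this degenerates to the familiar fact that a nonzero element of $\F\axs$ fails to vanish at some tuple of contractive matrices.)
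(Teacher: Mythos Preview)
Your outline correctly splits the argument into (a) proving $\rhard{I}=I$ (equivalently, that $A=\F\axs/I$ has a separating family of finite-dimensional $\ast$-representations) and (b) rescaling and assembling those representations into a single bounded tuple. Part (b) is essentially the paper's argument and your version of it is correct. The gap is in part (a): you identify it yourself as ``the main obstacle'' and then defer it (``presumably leaning on the machinery built earlier in this section''), but you give no argument. Your proposed route---put some inner product on the truncations $C_+/C_+^{\ge d}$ and extend by adjoints to $\ast$-representations of $A=C_+\ast_\F C_-$---is precisely the delicate point you flag and do not resolve: an arbitrary inner product could force relations among the $x_j,x_j^*$ that are absent in $A$, and you supply no mechanism to rule this out. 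The Exel--Loring analogy does not help here, since in that setting one chooses representations of the two free factors independently, whereas here the involution rigidly ties the action of $C_-$ to that of $C_+$, leaving no freedom to exploit.

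The paper fills this gap concretely in Proposition \ref{prop:homAnalHard}, and not along the lines you sketch. For each $d$ it enlarges $I$ to the $\ast$-ideal $I^{(d)}$ generated by $I$ together with all analytic monomials of degree $d{+}1$; this is again generated by analytic polynomials, so Proposition \ref{prop:analLinFun} (which rests on the Gr\"obner-basis analysis of Proposition \ref{prop:GBofAnStarIdeal}) produces a specific positive hermitian functional $L_d$ with $I^{(d)}=\{a:L_d(a^*a)=0\}$. The GNS construction then gives a $\ast$-representation on the pre-Hilbert space $\F\axs/I^{(d)}$, and the key step is to \emph{compress} to the finite-dimensional subspace $\cW=\{[ab]:a\text{ analytic},\ \deg b\le d\}$. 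This subspace is invariant for left multiplication by each $x_j$ (though not for $x_j^*$), and that invariance is exactly what makes the analytic generators of $I$ vanish after compression, while $q(X)[1]=[q]\ne 0$ whenever $q\notin I$ has $\deg q\le d$. Thus the inner product is not arbitrary, and the finite-dimensionality comes from a compression to an analytic-invariant subspace, not from a naive truncation of $C_+$.
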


\begin{proof}
See the end of \S \ref{sec:analyticMain}.
\end{proof}

\subsection{General $\ast$-Algebras}
 Even allowing for a liberal notion of zero, and hence of 
  variety of an ideal, there are real $\ast$-ideals without
  the Nullstellensatz property as we soon explain.
  While the example given here is an ideal
  in a free $\ast$-algebra, the natural context
  for much of the discussion is that 
  of a general (associative)  $\ast$-algebra $\chrisA$ over $\F \in \{\RR,\CC\}$. 
  Its elements will be considered as noncommutative polynomials.
  
  A \df{$\ast$-representation} $\pi$ of $\chrisA$ is a unital $\ast$-homomorphism from $\chrisA$ to the $\ast$-algebra
  of all adjointable operators on some pre-Hilbert space $V_\pi$ over $\F$.
  Let $\cR$ be the class of all $\ast$-representations of the $\ast$-algebra $\chrisA$ and let 
  $\cC$ be a fixed subclass of $\cR$ whose elements will be considered as (evaluations at) real points.
  We say that a \df{real point} $\pi \in \cC$ is a \df{hard zero}
   of a {\it polynomial} $a \in \chrisA$ if $\pi(a)=0$.
  For a subset $T$ of $\cC$, let
  \[
  \ichard{\cC}{T}:= \{a \in \chrisA \colon \pi(a)=0 \mbox{ for every } \pi \in T\} 
  \]
  \i{$\ichard{\cC}{T}$}
  be its \df{hard vanishing set}.  
  For a subset $\cS$ of $\chrisA,$ let
  \[
  \vchard{\cC}{\cS} := \{\pi \in \cC \colon \pi(s)=0 \mbox{ for every } s \in \cS\} 
  \]
  \i{$ \vchard{\cC}{\cS}$}
  be its \df{hard variety} and  
  \[
  \rchard{\cC}{\cS} := \ichard{\cC}{\vchard{\cC}{\cS}}
  \]
  \i{ $\rchard{\cC}{\cS} $}
   its \df{hard radical}. 
      If $I(\cS)$ is the $\ast$-ideal of $\chrisA$ 
      generated by $\cS$, then clearly
$$\vchard{\cC}{\cS}=\vchard{\cC}{I(\cS)} \quad \text{ and } \quad \rchard{\cC}{\cS}=\rchard{\cC}{I(\cS)}.$$
\i{$\vchard{\cC}{\cS}$}
The relation between a $\ast$-ideal  $I$ 
and its various radicals is summarized
 by (see Proposition \ref{prop:basicin})
\[
 I\subset \rr{I} \subseteq \rchard{\cR}{I} \subseteq \rchard{\cC}{I}.
\]
   We say that a $\ast$-ideal $I$ \df{satisfies the real Nullstellensatz 
   over $\cC$} if $\rchard{\cC}{I}=\rr{I}$. We say that $I$ has the 
\df{Nullstellensatz property over $\cC$} if $\rchard{\cC}{I}=I$
(which implies that $I$ is real.)

\begin{example}
\label{ex:fini}
If $\chrisA=\F \axs$ and $\Pi$ is the class of all finite-dimensional $\ast$-representations,
then $\rchard{\Pi}{I}=\rhard{I}$ for every $\ast$-ideal $I$ of $\chrisA$. Here we identify 
every $\pi \in \Pi$ with the $g$-tuple $(\pi(x_1),\ldots,\pi(x_g)) \in \matof{}{\F}{g}$.
Therefore, $I$ satisfies the real Nullstellensatz over $\Pi$ if and only if it satisfies the (hard) real Nullstellensatz.
\qed \end{example}

Motivated by Example \ref{ex:fini} we introduce the following abbreviations when $T \subseteq \Pi$ and $\cS \subseteq \chrisA$:
$$\ihard{T} = \ichard{\Pi}{T}, \quad \vhard{\cS}=\vchard{\Pi}{\cS} \quad \mbox{and} \quad \rhard{\cS}=\rchard{\Pi}{\cS}.$$

\begin{example}
\label{ex:matpoly}
Let $\chrisA=\matof{n}{\F[x]}{}$ be the algebra of all polynomials in
commuting variables $x=(x_1, \cdots, x_g)$
with coefficients in $n \times n$ matrices over $\F$. The involution is trivial
on variables and it is hermitian transpose on coefficients. 
Let $\cE$ \i{$\cE$ }
be the class of all $n$-dimensional $\ast$-representations
(i.e. all evaluations at real points from $\F^g$).
By \cite[Corollary 18]{c1}, every $\ast$-ideal of $\chrisA$ satisfies the real
Nullstellensatz over $\cE$.
The case $n=1$ corresponds to the classical Real Nullstellensatz
\cite{dub,ris,efr}.
\qed \end{example}

The following proposition, based on an example introduced in \cite{pop1},  shows that
a general Nullstellensatz is highly problematic.

\begin{prop}
 \label{prop:jaka}
 Fix  $0<q<1$. 
  The $\ast$-ideal in the  free $\ast$-algebra $\F\langle a,x,a^*,x^*\rangle$ in the two variables $a$ and $x$
 generated by
\[
  a^\ast a-q a a^\ast \quad \text{and} \quad xx^\ast+aa^\ast-1
\]
 is real but it does not satisfy the Real Nullstellensatz 
 over any representation class.
\end{prop}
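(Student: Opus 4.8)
The plan is to produce one polynomial, namely $a$ itself, that lies in $\rchard{\cC}{I}$ for \emph{every} class $\cC$ of $\ast$-representations while simultaneously lying outside $\rr{I}$. Since ``$I$ satisfies the real Nullstellensatz over $\cC$'' means exactly $\rchard{\cC}{I}=\rr{I}$, this disposes of all $\cC$ at once. First I would record that $f_1:=a^\ast a-qaa^\ast$ and $f_2:=xx^\ast+aa^\ast-1$ are $\ast$-symmetric, so $I$ is just the two-sided ideal they generate and $I=I^\ast$ automatically; hence $\rr{I}$ is the smallest real $\ast$-ideal containing $I$, and proving that $I$ is real amounts to proving $\rr{I}=I$.

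For the inclusion $a\in\rchard{\cC}{I}$, take any $\pi\in\vchard{\cC}{I}$ and put $A=\pi(a)$, $X=\pi(x)$, adjointable operators on the pre-Hilbert space $V_\pi$. From $\pi(f_2)=0$ we get $XX^\ast+AA^\ast=\Id$, so $\|A^\ast v\|^2\le\|v\|^2$ for all $v$, whence $A^\ast$, and therefore $A$, is bounded with $\|A\|=\|A^\ast\|\le 1$. From $\pi(f_1)=0$ we get $A^\ast A=qAA^\ast$, and the $C^\ast$-type norm identities for bounded adjointable operators give $\|A\|^2=\|A^\ast A\|=q\,\|AA^\ast\|=q\,\|A\|^2$, so $\|A\|=0$ since $0<q<1$. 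Thus $\pi(a)=0$ for every $\pi$ in the hard variety of $I$, i.e. $a\in\ichard{\cC}{\vchard{\cC}{I}}=\rchard{\cC}{I}$. As a by-product, this shows $B:=\chrisA/I$ has no faithful $\ast$-representation on a pre-Hilbert space, hence no faithful state.

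Next I would show $a\notin I$ by exhibiting a single unital algebra homomorphism $\varphi\colon\chrisA\to\operatorname{End}_\F(\F[t])$, not required to respect $\ast$, with $\varphi(f_1)=\varphi(f_2)=0$ but $\varphi(a)\ne 0$; such a $\varphi$ kills the two-sided ideal $I$, so $a\notin I$. Take $\varphi(a)=\Id$, $\varphi(a^\ast)=0$, $\varphi(x)=D$ (the backward shift $t^n\mapsto t^{n-1}$, $1\mapsto 0$) and $\varphi(x^\ast)=M$ (multiplication by $t$). Then $\varphi(f_1)=0\cdot\Id-q\,\Id\cdot 0=0$, and $\varphi(f_2)=DM+\Id\cdot 0-\Id=0$ because $DM=\Id$, whereas $\varphi(a)=\Id\ne 0$.

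The hard part will be proving that $I$ is real, equivalently that the involution on $B$ is proper: $\sum_k b_k^\ast b_k=0$ in $B$ must force every $b_k=0$. Since $B$ has no faithful state, no GNS-type argument is available, and one must argue combinatorially inside $B$. I would proceed in two moves. First, the $\mathbb{Z}^2$-grading with $\deg a=(1,0)$, $\deg x=(0,1)$, $\deg a^\ast=-\deg a$, $\deg x^\ast=-\deg x$ makes $f_1,f_2$ homogeneous of degree $0$, so it descends to $B$ with $B_{(m,n)}^\ast=B_{(-m,-n)}$; extracting the degree-$(0,0)$ component of $\sum_k b_k^\ast b_k=0$ reduces the claim to homogeneous $b_k$. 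Second, the reduction rules $a^\ast a\to q\,aa^\ast$ and $xx^\ast\to 1-aa^\ast$ strictly decrease the degree-lexicographic order and, crucially, admit \emph{no} overlap ambiguities, so Bergman's Diamond Lemma shows that the words in $a,x,a^\ast,x^\ast$ avoiding the subwords $a^\ast a$ and $xx^\ast$ form a basis of $B$, on which $\ast$ acts by a permutation; one then shows that for a nonzero homogeneous $b=\sum_w c_w w$ the product $b^\ast b$ is nonzero, with leading monomial determined by $\operatorname{lm}(b)$ and leading coefficient a strictly positive multiple of $|c_{\operatorname{lm}(b)}|^2$, so the leading contributions of the finitely many $b_k^\ast b_k$ are all positive and cannot cancel. (Alternatively, realness of $I$ may be deducible from the general realness criteria for $\ast$-ideals developed in Section \ref{sec:mainex}.) The genuinely delicate point, and the place where the real work lies, is the leading-term bookkeeping for the cross-products $w^\ast w'$ with $w\ne w'$: after reduction to normal form these must be shown neither to overtake nor to cancel the diagonal terms $w^\ast w$. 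Once $I$ is known to be real, $\rr{I}=I$, and combining with the first two steps we get, for every class $\cC$, that $a\in\rchard{\cC}{I}\setminus I=\rchard{\cC}{I}\setminus\rr{I}$; hence $I$ is real but satisfies the real Nullstellensatz over no representation class.
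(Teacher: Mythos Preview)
Your overall strategy is correct and is precisely the paper's: show that $B=\chrisA/I$ is formally real (so $\rr{I}=I$) while $a$ lies in the kernel of every $\ast$-representation of $B$ (so $a\in\rchard{\cC}{I}$ for all $\cC$), and $a\notin I$.

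Two genuine differences in execution are worth noting. First, your argument that $\pi(a)=0$ via the $C^\ast$-identity $\|A\|^2=\|A^\ast A\|=q\|AA^\ast\|=q\|A\|^2$ is slicker than the paper's, which works entirely inside the ordered algebra $B$: from $aa^\ast\le 1$ it gets $a^\ast a\le q$, then shows inductively (using $k(k-aa^\ast)=(k-aa^\ast)^2+a(k-a^\ast a)a^\ast$) that $a^\ast a\le q^m$ for all $m$, whence $\pi(a^\ast a)=0$ in any $\ast$-representation. Your boundedness step is needed to justify the $C^\ast$-identity on a mere pre-Hilbert space; the paper's route avoids this by never leaving the $\Sigma_B$-order.

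Second, you pick a different normal form. The paper reduces via $aa^\ast\to 1-xx^\ast$ and $a^\ast a\to q(1-xx^\ast)$, so its canonical monomials avoid $aa^\ast$ and $a^\ast a$; yours avoid $a^\ast a$ and $xx^\ast$. Both give bases, but the leading-term bookkeeping you correctly flag as the delicate point depends heavily on which one you choose. The paper carries this out in full with its normal form: writing each $p_i$ as $\sum z_{ij}(a^\ast)^j+\sum w_{ik}a^k+t_i$ (with $z_{ij},w_{ik},t_i$ not ending in $a$ or $a^\ast$), the identities
\[
(a^\ast)^m a^m=q^m-\sum_{l=0}^{m-1}q^{m-l}(a^\ast)^l xx^\ast a^l,\qquad
a^m(a^\ast)^m=1-\sum_{l=0}^{m-1}a^l xx^\ast (a^\ast)^l
\]
let one extract the minimal-degree part of $\sum p_ip_i^\ast$ as a genuine sum of hermitian squares with strictly positive weights, after which a lexicographic argument on the surviving monomials yields the contradiction. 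Your $\mathbb{Z}^2$-grading is a pleasant preliminary reduction, but it does not replace this step: elements homogeneous of fixed $\mathbb{Z}^2$-degree still have unbounded word-length, so a separate degree/leading-monomial analysis is still required. In short, your sketch of the realness proof is on the right track but incomplete exactly where you say it is, and the paper supplies precisely that missing calculation (with its own choice of normal form).
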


 The proof is based upon results of \cite{pop1} and some general
 theory of $*$-algebras developed here. The details are in Section \ref{sec:mainex}

\section{Properties of Real Ideals}
\label{sec:realIdeals}
In this section the basic properties of real ideals, including the proof 
 of Proposition \ref{thm:mainfindim},  are collected. 

\begin{lem}
 \label{lem:zi0}
   If $I$ is a left ideal in the $\ast$-algebra $\chrisA,$ then
   there is a largest two-sided ideal $Z(I)$ contained in $I$.
   Indeed, $Z(I)$ is the kernel of the left regular representation of
    $\chrisA$ on $\chrisA/I$. 
   Moreover, if $I$ is real, then  $Z(I)$ is  real.
\end{lem}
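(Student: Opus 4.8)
The plan is to exhibit $Z(I)$ explicitly, as $Z(I)=\{a\in\chrisA\colon a\chrisA\subseteq I\}$, and then to verify the three assertions of the lemma in turn: that this set is the kernel of the left regular representation, that it is the largest two-sided ideal contained in $I$, and that it inherits realness from $I$. The first two are formal; the third is where the small amount of work lies.

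First I would introduce the left regular representation $\lambda\colon\chrisA\to\operatorname{End}_\F(\chrisA/I)$ given by $\lambda(a)(b+I)=ab+I$. This is well defined precisely because $I$ is a left ideal, and it is a unital $\F$-algebra homomorphism, so its kernel $Z(I):=\ker\lambda$ is a two-sided ideal of $\chrisA$. Unwinding the definition, $a\in Z(I)$ iff $ab\in I$ for every $b\in\chrisA$, i.e.\ iff $a\chrisA\subseteq I$; since $\chrisA$ is unital, taking $b=1$ gives $Z(I)\subseteq I$. For maximality, if $J$ is any two-sided ideal with $J\subseteq I$ and $a\in J$, then $ab\in J\subseteq I$ for all $b$, so $a\in Z(I)$; hence $J\subseteq Z(I)$.

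The remaining point is that $Z(I)$ is real whenever $I$ is, and this is the step I expect to require the only genuine idea. Suppose $\sum_j a_j^\ast a_j\in Z(I)+Z(I)^\ast$, and write $\sum_j a_j^\ast a_j=z+w^\ast$ with $z,w\in Z(I)$. I would fix an arbitrary $b\in\chrisA$ and compute $\sum_j(a_jb)^\ast(a_jb)=b^\ast\bigl(\sum_j a_j^\ast a_j\bigr)b=b^\ast z b+(b^\ast w b)^\ast$. Because $Z(I)$ is a two-sided ideal, $b^\ast z b$ and $b^\ast w b$ again lie in $Z(I)$, so $\sum_j(a_jb)^\ast(a_jb)\in Z(I)+Z(I)^\ast\subseteq I+I^\ast$. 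Realness of $I$ now forces $a_jb\in I$ for each $j$; since $b\in\chrisA$ was arbitrary, $a_j\chrisA\subseteq I$, that is, $a_j\in Z(I)$.

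The mild subtlety to flag is that this argument uses the two-sidedness of $Z(I)$ in an essential way — to conjugate the sum-of-squares relation by an arbitrary $b$ — and, conveniently, does \emph{not} presuppose that $Z(I)$ is a $\ast$-ideal; the only inclusion invoked is $Z(I)+Z(I)^\ast\subseteq I+I^\ast$, which is immediate from $Z(I)\subseteq I$. No further obstacle is anticipated.
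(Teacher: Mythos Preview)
Your proof is correct and follows essentially the same approach as the paper: identify $Z(I)$ as the kernel of the left regular representation, check maximality directly, and for realness conjugate the sum-of-squares relation by an arbitrary $b\in\chrisA$ using two-sidedness of $Z(I)$ to land back in $Z(I)+Z(I)^\ast\subseteq I+I^\ast$. The paper additionally notes the intermediate fact that each $p_i\in I$ before conjugating, but this is not needed and your version is slightly cleaner.
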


\begin{proof}
  The kernel $Z$ of the left regular representation is a two-sided ideal contained in $I$.
  On the other hand, if $J\subset I$ is a two-sided ideal,
  $\theta\in J$ and $v \in \chrisA$, then $\theta v \in J\subset I$
  and hence $\pi(\theta)=0$ and 
   $\theta\in Z$.   As an aside, note that 
\[
  Z = \{ \vartheta \in I \colon \vartheta p \in
   I \mbox{ for every } p \in \chrisA\}.
\]

 Now suppose $I$ is a  real ideal. To see that $Z$ is a real ideal, suppose
\[
  \sum_i^{\fin} p_i^*p_i \in Z  + Z^*.
\]
 Since $I$ is real, each $p_i \in I$.   Further, 
 if $a,b \in Z$, then
 $q(a + b^*)r = (qar) + (r^*bq^*)^* \in Z +Z^*,$
   for each $q,r \in \chrisA.$
 Therefore for each $q \in \chrisA$,
\[
  \sum_i^{\fin} q^*p_i^*p_iq \in Z +Z^*,
\]
 which implies that each $p_iq \in I$.  Therefore each $p_i \in Z.$
 Hence $Z$ is a real ideal. 
\end{proof}

\begin{lem}
\label{lem:zi}
 Let $I \subset \chrisA$ be a two-sided ideal in the 
 $*-$algebra $\chrisA$.
\begin{enumerate}[(i)]
 \item \label{it:zi1} If $I$ is real, then $I = I^*$.
 \item \label{it:zi2} The radical $\rr{I}$ is the smallest two-sided real ideal containing $I$.
\end{enumerate}
\end{lem}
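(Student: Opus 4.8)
The plan is to handle the two parts in order, using Lemma \ref{lem:zi0} for the second part. For part \eqref{it:zi1}, suppose $I$ is a real two-sided ideal and let $a \in I$ be arbitrary; I want to show $a^* \in I$. The idea is to cook up an expression of the form $\sum p_i^* p_i \in I + I^*$ that forces $a^* \in I$ via realness. Since $I$ is two-sided, $a^* a \in I \subseteq I + I^*$, and $a^* a$ is already a single square $p^* p$ with $p = a$; realness then gives $a \in I$, which we already knew, so that is not enough. Instead I would apply realness to $a a^* = (a^*)^* a^* \in I \subseteq I + I^*$ (using that $I$ is two-sided, so $a a^* \in I$); writing $a a^* = p^* p$ with $p = a^*$, realness yields $p = a^* \in I$. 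Hence $I \subseteq I^*$, and applying the same to $I^*$ (which is also a real two-sided ideal, since realness of $I$ as a left ideal is a symmetric-looking condition — one checks $\sum p_i^* p_i \in I^* + I$ implies, after taking adjoints inside, the $p_i \in I^*$) gives $I^* \subseteq I$, so $I = I^*$.

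For part \eqref{it:zi2}, recall $\rr{I}$ is by definition the smallest real \emph{left} ideal containing $I$. I must upgrade "left" to "two-sided." The key observation is Lemma \ref{lem:zi0}: for the left ideal $\rr{I}$ there is a largest two-sided ideal $Z(\rr{I})$ contained in $\rr{I}$, and since $\rr{I}$ is real, $Z(\rr{I})$ is real. Now $I$ itself is two-sided and $I \subseteq \rr{I}$, so $I \subseteq Z(\rr{I})$ by maximality of $Z(\rr{I})$ among two-sided ideals inside $\rr{I}$. Thus $Z(\rr{I})$ is a real \emph{left} ideal (every two-sided ideal is a left ideal) containing $I$, so by minimality of $\rr{I}$ we get $\rr{I} \subseteq Z(\rr{I})$; combined with $Z(\rr{I}) \subseteq \rr{I}$ this forces $\rr{I} = Z(\rr{I})$, which is two-sided. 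Therefore $\rr{I}$ is a two-sided real ideal containing $I$, and it is contained in every other such ideal (since any two-sided real ideal is in particular a real left ideal, and $\rr{I}$ is the smallest of those). By part \eqref{it:zi1} it is automatically a $\ast$-ideal as well.

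The only place requiring a little care — the main obstacle, such as it is — is confirming that the realness condition is genuinely symmetric under $\ast$, i.e. that $I^*$ is real whenever $I$ is, which is needed in part \eqref{it:zi1}. This follows because $\sum p_i^* p_i \in I^* + (I^*)^* = I^* + I$ is the same relation as the defining one for $I$ with the roles of $I$ and $I^*$ interchanged, and the conclusion "$p_i \in I$" for that relation is exactly "$p_i \in I^*$" read for $I^*$ — one just has to be slightly careful that the defining relation $\sum p_i^* p_i \in I + I^*$ is visibly invariant under swapping $I \leftrightarrow I^*$, which it is since $I + I^* = I^* + I$. Everything else is a direct appeal to Lemma \ref{lem:zi0} and the extremal characterizations of $\rr{I}$.
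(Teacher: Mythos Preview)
Your argument follows the paper's almost exactly. Part \eqref{it:zi2} is identical: invoke Lemma \ref{lem:zi0} to get that $Z(\rr{I})$ is real, observe $I \subseteq Z(\rr{I}) \subseteq \rr{I}$, and collapse by minimality. Part \eqref{it:zi1} uses the same key observation as the paper, namely that $aa^* = (a^*)^* a^* \in I$ forces $a^* \in I$ by realness.

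One small point: your detour through ``$I^*$ is also real'' is both unnecessary and not quite justified. The hypothesis $\sum p_i^* p_i \in I^* + I = I + I^*$ is indeed symmetric, but the \emph{conclusion} is not: realness of $I$ gives $p_i \in I$, not $p_i \in I^*$, and ``taking adjoints inside'' does not convert one into the other. Fortunately you do not need this. Having shown $a \in I \Rightarrow a^* \in I$, you have $I^* \subseteq I$; applying $*$ to that inclusion gives $I \subseteq I^*$ immediately. (This is exactly what the paper does, in one line.)
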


\begin{proof}
First, if $I$ is real, then for each $\iota \in I$, we have $\iota\iota^* \in
I$ since $I$ is two-sided. Thus $\iota^* \in I$ since $I$ is real.
Therefore $I = I^*$.

As in Lemma \ref{lem:zi0}, let $Z(\rr{I})$ be the largest two-sided ideal of $\chrisA$ contained in
$\rr{I}.$, 
Since $I \subset \rr{I}$, and $I$ is a two-sided ideal, $I \subset
Z(\rr{I})$.
Thus $Z(\rr{I}) \subset \rr{I}$ is a real left ideal
containing $I$.  Hence $Z(\rr{I}) = \rr{I}$.
Since all two-sided ideals are also left ideals, and $\rr{I}$ is the
smallest real left ideal containing $I$, it must also be the smallest real
two-sided ideal containing $I$.
\end{proof}

\begin{prop} 
 \label{prop:basicin}
If $I$ is a $\ast$-ideal of $\chrisA,$ then 
$$I\subset \rr{I} \subseteq \rchard{\cR}{I} \subseteq \rchard{\cC}{I}.$$
\end{prop}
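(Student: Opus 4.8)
The plan is to establish the chain of inclusions in Proposition \ref{prop:basicin} one link at a time, since each follows from a basic general principle. The first inclusion $I\subset\rr{I}$ is immediate from the definition of the real radical as the intersection of all real left ideals containing $I$. The last inclusion $\rchard{\cR}{I}\subseteq\rchard{\cC}{I}$ follows from the fact that $\cC\subseteq\cR$: if $T\subseteq T'$ then $\ichard{}{T}\supseteq\ichard{}{T'}$, so $\vchard{\cC}{I}\subseteq\vchard{\cR}{I}$ and applying $\ichard{}{\cdot}$ reverses the inclusion again, giving $\rchard{\cR}{I}=\ichard{\cR}{\vchard{\cR}{I}}\subseteq\ichard{\cC}{\vchard{\cC}{I}}=\rchard{\cC}{I}$. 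Here one should also note the standing identity $\rchard{\cC}{I}=\ichard{\cC}{\vchard{\cC}{I}}$ and that $\rchard{\cC}{\cdot}$ is monotone, both recorded in the text preceding the proposition.

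The substantive step is the middle inclusion $\rr{I}\subseteq\rchard{\cR}{I}$. I would prove this by showing that $\rchard{\cR}{I}$ is itself a real left ideal containing $I$, so that it must contain the smallest such ideal $\rr{I}$. That $\rchard{\cR}{I}\supseteq I$ is clear since every $\pi\in\vchard{\cR}{I}$ kills all of $I$ by definition. That $\rchard{\cR}{I}$ is a two-sided (indeed $\ast$-) ideal follows because it is an intersection over $\pi\in\vchard{\cR}{I}$ of the kernels $\ker\pi$, each of which is a $\ast$-ideal ($\pi$ being a unital $\ast$-homomorphism). The real part is the crux: suppose $a_1,\dots,a_n\in\chrisA$ with $\sum_j a_j^*a_j\in \rchard{\cR}{I}+\rchard{\cR}{I}^*$. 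Since $\rchard{\cR}{I}$ is a $\ast$-ideal this just says $\sum_j a_j^*a_j\in\rchard{\cR}{I}$, hence $\pi\bigl(\sum_j a_j^*a_j\bigr)=0$ for every $\pi\in\vchard{\cR}{I}$, i.e. $\sum_j \pi(a_j)^*\pi(a_j)=0$ as an operator on the pre-Hilbert space $V_\pi$. Evaluating the associated quadratic form at any $v\in V_\pi$ gives $\sum_j \langle \pi(a_j)v,\pi(a_j)v\rangle=\sum_j\|\pi(a_j)v\|^2=0$, forcing $\pi(a_j)v=0$ for every $j$ and every $v$, hence $\pi(a_j)=0$. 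Since this holds for all $\pi\in\vchard{\cR}{I}$, each $a_j\in\rchard{\cR}{I}$, which is exactly the real ideal condition.

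The one point requiring a little care — and the place I expect to be the main obstacle — is the positivity argument $\sum_j\|\pi(a_j)v\|^2=0\Rightarrow\pi(a_j)v=0$ in the context of a \emph{pre-Hilbert} space over $\F\in\{\RR,\CC\}$: one must confirm that the inner product on $V_\pi$ is genuinely positive definite (not merely positive semidefinite), which is part of the definition of a pre-Hilbert space, so that a vanishing sum of squared norms forces each term to vanish. Once that is in hand the argument is routine. I would therefore organize the proof as: (1) note $I\subset\rr{I}$ by definition; (2) show $\rchard{\cR}{I}$ is a real $\ast$-ideal containing $I$ via the kernel description and the pre-Hilbert positivity argument above, concluding $\rr{I}\subseteq\rchard{\cR}{I}$ by minimality of $\rr{I}$; (3) deduce $\rchard{\cR}{I}\subseteq\rchard{\cC}{I}$ from $\cC\subseteq\cR$ and monotonicity.
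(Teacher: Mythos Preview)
Your argument is correct and follows the same approach as the paper's: show that $\rchard{\cR}{I}$ (indeed $\rchard{\cC}{I}$ for any $\cC$) is a real $\ast$-ideal containing $I$---because it is an intersection of kernels of $\ast$-representations, each of which is real by the pre-Hilbert positivity computation you spelled out---and then invoke minimality of $\rr{I}$; the paper simply asserts that ``the kernel of a $\ast$-representation is always a real $\ast$-ideal'' where you give the details. One small wording quibble: the inclusion $\vchard{\cC}{I}\subseteq\vchard{\cR}{I}$ follows directly from $\cC\subseteq\cR$, not from the inclusion-reversing property of $\ichard{}{\cdot}$ that you stated just before it; the logic is right but the phrasing in that sentence is slightly out of order.
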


\begin{proof}
It is clear that the kernel of a $\ast$-representation is always a real
$\ast$-ideal. In particular,
$\ichard{\cC}{T}$ is a real $\ast$-ideal for every subset $T$ of $\cC$. For $T=\vchard{\cC}{I}$
we get that
$\rchard{\cC}{I}$ is a real $\ast$-ideal. Since $\rchard{\cC}{I}$ contains $I$, it
follows that
$\rr{I} \subseteq \rchard{\cR}{I}$. The third inclusion is clear from $\cC
\subseteq \cR$.
\end{proof}

\subsection{Formally Real $\ast$-Algebras}
 A $\ast$-algebra $\chrisA$ is \df{formally real} (or \df{very
proper})
if $a_1,\ldots,a_k \in \chrisA$ and  $\sum_{i=1}^k a_i^\ast a_i
=0$ implies $a_1=\ldots=a_k=0$.
 Let  $\Sigma_\chrisA$ denote the set of all finite sums of elements $a^\ast a$, $a
\in \chrisA$.
 Alternately,  $\chrisA$ is formally real if and only if it is proper (i.e. $a^\ast
a=0$ implies $a=0$ for every
$a \in \chrisA$) and $-\Sigma_\chrisA \cap \Sigma_\chrisA={0}$ (i.e. 
for every $a_1,\ldots,a_k \in \chrisA$ such that $\sum_{i=1}^k a_i^\ast a_i =0$
we have that 
$a_i^\ast a_i =0$ for all $i$.)

Let $I_h=\{a \in I \colon a^*=a\}=I \cap \chrisA_h$ denote
 the \df{hermitian} elements of $I$. 

\begin{lem}
\label{fr}
For a $\ast$-ideal $I$ of a $\ast$-algebra $\chrisA$, the following are
equivalent.
\begin{enumerate}
\item $I$ is real; i.e., $I=\rr{I};$ 
\item Both $(\Sigma_A+I_h) \cap -(\Sigma_A+I_h)=I_h$ and $a^\ast a \in I$ implies $a
\in I$ for every $a \in \chrisA$,
\item The quotient $\chrisA/I$ is formally real.
\end{enumerate}
\end{lem}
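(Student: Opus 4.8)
The plan is to prove the equivalence of the three conditions in Lemma \ref{fr} by establishing the cycle $(1)\Rightarrow(3)\Rightarrow(2)\Rightarrow(1)$, working throughout with hermitian elements since a $\ast$-ideal is spanned by its hermitian part and formal reality is a statement about sums $\sum a_i^\ast a_i$.

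For $(1)\Rightarrow(3)$: assume $I$ is real, and suppose $\sum_{i=1}^k \bar a_i^\ast \bar a_i = 0$ in $\chrisA/I$, where $\bar a_i$ denotes the coset of $a_i$. Lifting to $\chrisA$, this says $\sum a_i^\ast a_i \in I = I + I^*$, so by realness each $a_i \in I$, i.e., $\bar a_i = 0$. Hence $\chrisA/I$ is formally real.

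For $(3)\Rightarrow(2)$: assume $\chrisA/I$ is formally real. The second clause of (2), namely $a^\ast a \in I \Rightarrow a \in I$, is immediate: $a^\ast a \in I$ means $\bar a^\ast \bar a = 0$ in $\chrisA/I$, so $\bar a = 0$ by properness (a consequence of formal reality). For the first clause, one inclusion $I_h \subseteq (\Sigma_\chrisA + I_h)\cap -(\Sigma_\chrisA + I_h)$ is trivial since $0 \in \Sigma_\chrisA$ and $I_h = -I_h$. For the reverse, suppose $h \in (\Sigma_\chrisA + I_h)\cap -(\Sigma_\chrisA + I_h)$, so $h = \sum a_i^\ast a_i + u$ and $-h = \sum b_j^\ast b_j + v$ with $u,v \in I_h$. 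Adding gives $\sum a_i^\ast a_i + \sum b_j^\ast b_j = -(u+v) \in I_h$, hence $\sum \bar a_i^\ast \bar a_i + \sum \bar b_j^\ast \bar b_j = 0$ in $\chrisA/I$; formal reality forces each $\bar a_i = 0$, so $\sum a_i^\ast a_i \in I$, whence $h = \sum a_i^\ast a_i + u \in I$, and since $h$ is hermitian $h \in I_h$.

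For $(2)\Rightarrow(1)$: assume (2) and suppose $\sum_{i=1}^n p_i^\ast p_i \in I + I^* = I$ (using that $I$ is a $\ast$-ideal). I want to conclude each $p_i \in I$. Since $\sum p_i^\ast p_i$ is hermitian and lies in $I$, it lies in $I_h$; then $-\sum p_i^\ast p_i = \sum p_i^\ast p_i - 2\sum p_i^\ast p_i \in \Sigma_\chrisA + I_h$ while trivially $\sum p_i^\ast p_i \in \Sigma_\chrisA \subseteq \Sigma_\chrisA + I_h$, so $\sum p_i^\ast p_i \in (\Sigma_\chrisA + I_h) \cap -(\Sigma_\chrisA + I_h) = I_h$. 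That alone only recovers membership in $I$, which we already knew, so the real work is to peel off the individual summands. The standard device is to use the first clause of (2) iteratively: write $\sum_{i=1}^n p_i^\ast p_i = p_n^\ast p_n + \big(\sum_{i=1}^{n-1} p_i^\ast p_i\big)$; since the total lies in $I_h$ and $\sum_{i=1}^{n-1} p_i^\ast p_i \in \Sigma_\chrisA$, we get $-p_n^\ast p_n = \sum_{i=1}^{n-1} p_i^\ast p_i - \sum_{i=1}^n p_i^\ast p_i \in \Sigma_\chrisA + I_h$ and $p_n^\ast p_n \in \Sigma_\chrisA \subseteq \Sigma_\chrisA + I_h$, so $p_n^\ast p_n \in (\Sigma_\chrisA + I_h)\cap -(\Sigma_\chrisA+I_h) = I_h \subseteq I$; by the second clause of (2), $p_n \in I$. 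Then $\sum_{i=1}^{n-1}p_i^\ast p_i = \sum_{i=1}^n p_i^\ast p_i - p_n^\ast p_n \in I$ and we repeat. This completes the cycle.

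The main obstacle is the last implication, specifically recognizing that the seemingly weak condition $(\Sigma_\chrisA + I_h)\cap -(\Sigma_\chrisA+I_h) = I_h$ in (2) is exactly strong enough to let one split off summands one at a time, because the partial sum $\sum_{i<n} p_i^\ast p_i$ serves as the "$\Sigma_\chrisA$ slack" needed on the negative side; once that observation is in hand the rest is bookkeeping about $\ast$-ideals being generated by hermitian elements and $I+I^*=I$.
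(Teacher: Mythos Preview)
Your proof is correct. The paper actually states Lemma~\ref{fr} without proof (it proceeds directly to Remark~\ref{rem:Jaka-procedure}), presumably regarding the equivalence as routine; your argument supplies exactly the missing details. The cycle $(1)\Rightarrow(3)\Rightarrow(2)\Rightarrow(1)$ is the natural one, and the key step---using the hypothesis $(\Sigma_\chrisA+I_h)\cap -(\Sigma_\chrisA+I_h)=I_h$ to peel off one summand at a time by letting the remaining partial sum $\sum_{i<n}p_i^\ast p_i$ serve as the $\Sigma_\chrisA$-slack on the negative side---is carried out cleanly.

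One minor stylistic remark: in $(2)\Rightarrow(1)$ your first paragraph (observing that $\sum p_i^\ast p_i\in(\Sigma_\chrisA+I_h)\cap-(\Sigma_\chrisA+I_h)$ only recovers membership in $I_h$) is superfluous, since you immediately note it yourself and move on to the real argument; you could simply begin with the peeling step.
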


\begin{rem}
 \label{rem:Jaka-procedure}
   There is an iterative description of 
  $\rr{I}$ along the lines of 
  that for the real radical of a left ideal
  as described in \cite[Section 5]{chmn}.
\qed \end{rem}

\subsection{Proof of Proposition \ref{thm:mainfindim}}
 The length of the longest word in a noncommutative polynomial $f\in \FA$ is the
\df{degree} of $f$ and is denoted by $\deg( f)$. The set
of all words of degree at most $k$ is $\axs_k$, and $\FA_k$ is the vector
space of all noncommutative polynomials of degree at most $k$.

Given a subspace  $W$ of $\F\axs,$ let
\[ W_d = \{ w \in W \colon \deg(w) \leq d\}  = W\cap \FA_d,,\]
denote the elements of $W$ of degree at most $d$. 
Likewise, let
\[
W_d^{\hom} := \{w \in W \colon w = 0 \mbox{ or } w\ \text{is homogeneous of
degree}\ d \}, 
\]
 denote the homogeneous of degree $d$ elements of $W$.
  The \df{leading polynomial} of a nonzero polynomial $p \in \F\axs$ is
  the unique homogeneous polynomial $p^{\lead}$ such that $\deg(p) =
\deg(p^{\lead})$ and $\deg(p - p^{\lead}) < \deg(p)$. Let
\[W_d^{\lead} := \{0\} \cup \{ w' \in \F\axs_d^{\hom} \colon w' \mbox{ is the
leading polynomial of an element of } W \}.\]

\begin{proof}[Proof of Proposition \ref{thm:mainfindim} (\ref{it:finGen})]
If $\chrisA$ is a finitely-generated algebra, and $I \subset \chrisA$ is an
ideal such that the dimension of 
$\chrisA/I$ is finite, 
then $I$ is finitely generated as a left ideal by 
\cite[Lemma 3]{lewin}. 
The special case $\chrisA=\FA$ is all that is required for this proof.

Next, suppose that $I$ is finitely generated as a left ideal,
but $I \neq 0$.
There exists some degree $d$ such that $I$ is generated by
some
nonzero
polynomials of degree bounded by $d$.
By \cite{chmn}[Proposition 2.19] we can decompose $\F\axs$ as
\[\F\axs = I \oplus V\]
where
\[V = \F\axs V_d^{\hom} \oplus V_{d-1}\]
with
\[\F\axs_d^{\hom} = I_d^{\lead} \oplus V_d^{\hom}.\]
Further, by \cite{chmn}[Equation (2.3)] we have, for each $e \geq 0$,
\[
I_{d+e}^{\lead} = \F\axs_e^{\hom} I_d^{\lead}.
\]
Let $\iota \in I \setminus \{0\}$.
If $V_d^{\hom} \neq \{0\}$, then let $\nu \in V_d^{\hom} \setminus \{0\}$.
If $\deg(\iota) = e$, then the leading polynomial of $\iota \nu$ is in the space
\[\F\axs_e^{\hom} V_d^{\hom} \cap I_{d+e}^{\lead} = \F\axs_e^{\hom}(V_d^{\hom}
\cap I_d^{\lead}) = \{0\}.\]
 This equality leads to the contradiction $\iota \nu =0$.  Therefore
$V_{d}^{\hom} = \{0\}$.  Therefore
\[\dim(\F\axs/I) = \dim(V) = \dim(V_{d-1}) < \infty,\]
since $V_{d-1}$ is a subspace of $\F\axs_{d-1}$, which is finite dimensional.

\end{proof}

\begin{rem}
It is not true in general that if $\chrisA$ is a finitely-generated $*-$algebra and
$I$ is a two-sided ideal which is also a finitely-generated left ideal, then
$\dim(\chrisA/I) < \infty$. 
 As an example, consider the (commutative) polynomial
 ring  $\F [t_1, \ldots, t_g]$ in two or more variables $g$ with
  the trivial involution.  Since this algebra is commutative,
  there is no distinction between left and two-sided ideals.  In 
  particular, the ideal $I$ generated by $t_1$ is finitely generated, but
  $\F [t_1, \ldots, t_g]/I = \F [t_2,\ldots, t_g]$ is not finite dimensional.
\qed \end{rem}

We now turn our attention to proving Proposition \ref{thm:mainfindim}\eqref{it:big}.
Proposition \ref{structure} summarizes the structure theory of 
finite-dimensional formally-real $\ast$-algebras. Remark 
\ref{remstruct} explains the history of this result.

\begin{prop}
\label{structure}
If  $\chrisA$ is a finite-dimensional formally real $\ast$-algebra, then
$\chrisA$ is a 
$\ast$-algebra direct sum of formally real simple $\ast$-algebras. Moreover:
\begin{enumerate}
\item If $F=\CC,$ then every simple $\ast$-algebra is of the form
$\matof{n}{\CC}{}$
where $n \in \NN$ and the involution
is conjugate transpose.
\item If $F=\RR,$ then every simple $\ast$-algebra is of the form
$\matof{n}{\RR}{}$ or
$\matof{n}{\CC}{}$ or $\matof{n}{\mathbb H}{}$
where $n \in \NN$ and the involution is conjugate transpose.  Here $\mathbb{H}$
denotes the quaternions.
\end{enumerate}
In particular, $\chrisA$ has a faithful finite-dimensional
$\ast$-representation.
\end{prop}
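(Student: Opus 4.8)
The plan is to prove Proposition \ref{structure} in three steps: (i) show $\chrisA$ is semisimple; (ii) decompose it as a $\ast$-direct sum of simple $\ast$-algebras, each again formally real; (iii) classify the formally real simple $\ast$-algebras over $\F$ and extract the faithful representation. For (i), recall that formal reality makes $\chrisA$ \emph{proper}: $a^*a=0$ forces $a=0$. Since $\chrisA$ is finite dimensional its Jacobson radical $J$ is nilpotent, and $J$ is a $\ast$-ideal because $J^*$ is again a nilpotent ideal, hence contained in $J$, and symmetrically. If $J\neq\{0\}$, pick $b\in J\setminus\{0\}$ and set $c=b^*b\in J$; then $c$ is hermitian and, by properness, nonzero, so it has a nilpotency index $N\ge 2$. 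Taking $i=\lceil N/2\rceil$ one gets $i\le N-1$, so $c^i\neq 0$, while $(c^i)^*c^i=c^{2i}=0$, contradicting properness. Hence $J=\{0\}$ and $\chrisA$ is semisimple.

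For (ii), Wedderburn--Artin gives $\chrisA=A_1\oplus\cdots\oplus A_m$ with the $A_k$ the minimal two-sided ideals, each a simple algebra. The involution, being an anti-automorphism, permutes the $A_k$; if it mapped some $A_i$ onto a distinct $A_j$, then for $0\neq a\in A_i$ we would have $a^*\in A_j$ and $a^*a\in A_jA_i\subseteq A_i\cap A_j=\{0\}$, again contradicting properness. So each $A_k$ is $\ast$-invariant, i.e.\ a simple $\ast$-algebra, and it is formally real because any relation $\sum a_i^*a_i=0$ valid in $A_k$ is also valid in $\chrisA$.

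For (iii), a finite-dimensional simple $\F$-algebra is $M_n(D)$ for a finite-dimensional division $\F$-algebra $D$, and Frobenius' theorem forces $D=\CC$ when $\F=\CC$ and $D\in\{\RR,\CC,\HH\}$ when $\F=\RR$. Every $\ast$-algebra involution on $M_n(D)$ is, up to an inner automorphism, of the form $X\mapsto \Phi^{-1}X^*\Phi$, where $X^*$ is the conjugate transpose relative to the standard involution of $D$ (identity on $\RR$, complex conjugation on $\CC$, quaternionic conjugation on $\HH$) and $\Phi$ is $\ast$-invertible with $\Phi^*=\pm\Phi$, determined up to real scalars and $\ast$-congruence. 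When $\Phi$ can be normalized to be positive definite the involution is $\ast$-isomorphic to conjugate transpose; such $(M_n(D),\ast)$ with $D\in\{\RR,\CC,\HH\}$ is formally real because it acts faithfully and $\ast$-preservingly on the finite-dimensional \emph{real} Hilbert space $D^n$ with inner product $\langle v,w\rangle=\operatorname{Re}\operatorname{tr}(w^*v)$ (equivalently, via $\CC\hookrightarrow M_2(\RR)$ and $\HH\hookrightarrow M_2(\CC)$ one gets a faithful $\ast$-representation on a finite-dimensional $\F$-inner-product space). In every other case — $\Phi$ indefinite hermitian, or $\Phi$ skew-hermitian — the form defined by $\Phi$ is isotropic, so one exhibits $a_1,\dots,a_k$, not all zero, with $\sum a_i^*\Phi a_i=0$, i.e.\ $\sum a_i^\# a_i=0$ for the corresponding twisted involution $\#$, showing that $\ast$-algebra is \emph{not} formally real. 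Hence the formally real simple summands are exactly $M_n(D)$ with $D\in\{\RR,\CC,\HH\}$ and conjugate transpose, and the direct sum over $k$ of the above faithful representations is a faithful finite-dimensional $\ast$-representation of $\chrisA$.

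The main obstacle is step (iii): the classification of involutions on $M_n(D)$ and the check that each one other than conjugate transpose destroys formal reality. This relies on the normal forms of hermitian and skew-hermitian sesquilinear forms over $\RR$, $\CC$, and $\HH$ (Witt-type theory) together with a case analysis, with the quaternionic skew-hermitian (``symplectic'') involutions and the first-kind involutions on $M_n(\CC)$ regarded as an $\RR$-algebra being the subtler cases; steps (i) and (ii) are routine.
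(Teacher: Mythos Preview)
Your outline is correct and follows the same three-step strategy as the paper, though you supply direct arguments where the paper cites references. For (i) the paper simply invokes \cite{munn} for semisimplicity; your nilpotent-radical argument is the standard proof behind that citation. For (ii) the paper quotes Jacobson's trichotomy for simple $\ast$-algebras (exchange, conjugate-transpose, symplectic) and discards the exchange and symplectic types as not even proper; your observation that a nontrivial permutation of the Wedderburn factors would force $a^*a=0$ with $a\neq 0$ is exactly the exchange case handled by hand. For (iii) the paper, having reduced to $M_n(D)$ with conjugate transpose, uses Frobenius for $D$ and then Noether--Skolem on $\HH$ to exhibit the explicit identity $1+ii^\#+jj^\#+kk^\#=0$ eliminating the non-standard quaternionic involutions. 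Your $\Phi$-parametrization is a legitimate alternative route, but one claim needs repair: ``$\Phi$ skew-hermitian $\Rightarrow$ the form is isotropic'' is false for $n=1$, $D=\HH$ (for $\Phi=i$ one has $|\bar v\,i\,v|=|v|^2$, so the form is anisotropic), yet the corresponding involution on $\HH$ is still not formally real, as witnessed by $a_1=1$, $a_2=j$ giving $1^\#1+j^\#j=1+j^2=0$. So that particular case genuinely requires the ad hoc identity the paper writes down, not an isotropy argument. You correctly flag the quaternionic-skew and the first-kind $M_n(\CC)$-over-$\RR$ cases as the subtler ones; just be aware that the former is not actually covered by your isotropy template.
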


\begin{proof}
Let $\chrisA$ be finite dimensional and formally real. By \cite[Theorem
2.2]{munn}, $\chrisA$ is semisimple.
By \cite[Chapter 0]{jac}, every semisimple algebra with involution is a
$\ast$-algebra direct sum of
simple $\ast$-algebras and every simple $\ast$-algebra is one of the following
types (where $D$ is a division algebra with involution): 
(i) $\matof{n}{D}{} \otimes {\matof{n}{D}{}}^{\mathrm{op}}$ with exchange
involution, (ii)
$\matof{n}{D}{}$ with conjugate transpose involution
or (iii) $\matof{2n}{D}{}$ with symplectic involution. The exchange and the
symplectic involution are clearly not formally real. (They are not even proper.)
Therefore every formally real simple $\ast$-algebra is of type (ii). Finally we
use the Frobenious theorem which 
says that $\RR$, $\CC$ and $\mathbb{H}$ are the only finite-dimensional division
algebras over $\RR$.
It remains to show that the only formally real involution on $\mathbb{H}$ is
the standard involution.
By the Noether-Skolem theorem, every involution $\#$ on $\mathbb{H}$ is of the
form $x^\#=h x^* h^{-1}$
where $\ast$ is the standard involution and $h \in D$. Now $x^{\#\#}=x$ implies
that $h^\ast h^{-1}$ is in $Z(\mathbb{H})=\RR$.
Since $h^{**}=h$, we get $h^\ast=\pm h$.
If $h^* =h$, then $h \in \RR$ and so $x^\#=h x^* h^{-1}=x^*$ for every $x \in
\mathbb{H}$.
If $h^\ast=-h$, then $h=\alpha i+\beta j+\gamma k$ for some $\alpha,\beta,\gamma
\in \RR$. It follows that 
$ihi+jhj+khk=(-\alpha i+\beta j+\gamma k)+(\alpha i-\beta j+\gamma k)+(\alpha i+\beta j-\gamma k)=\alpha i+\beta j+\gamma k =h$.
Multiplying through by $h^{-1}$ we get $1+ii^\#+jj^\#+kk^\#=0$.
Therefore, $\#$ is equal to $\ast$ in the first case while it is is not formally
real in the second case.
\end{proof}

\begin{rem}
\label{remstruct}
Part (1) of Proposition \ref{structure} is true even for proper involutions.
This is well-known (see e.g. \cite[Theorem 9.7.22]{palmer}) and it is also clear from our proof.

Part (2) is is a slight generalization of the structure theorem for positive involutions on finite-dimensional real $\ast$-algebras. 
The history of this result is explained in \cite[Section 2]{lewis} and \cite{ob}.
Recall that an involution $\ast$ on a finite-dimensional real algebra $\chrisA$ is \df{positive}
if $\operatorname{Tr}(a^\ast a)>0$ for every nonzero $a \in \chrisA$. 
Clearly, every positive involution is formally real but the converse is false; see \cite{cim}.

Part (2) does not generalize to proper involutions because there are proper involutions
on $\HH$ which are not standard. For example, a short computation shows that the involution 
defined by $i^\ast=i,j^\ast=j$ (and so $k^\ast=-k$) is proper but it is not formally real.
\qed
\end{rem}

We are now able to prove the following generalization of Proposition \ref{thm:mainfindim} \eqref{it:big}.

\begin{cor}
\label{cor:ihardx}
If $I$ is a $*$-ideal of a $\ast$-algebra $\chrisA$, then $I$ is real with $\dim(\chrisA/I) < \infty$
if and only if $I$ is the kernel of some finite-dimensional $\ast$-representation.
In particular, if $I$ is a $*$-ideal of a $\ast$-algebra $\chrisA$ such that
$\dim (\chrisA/\rr{I}) < \infty$, then $\rhard{I}=\rr{I}.$
\end{cor}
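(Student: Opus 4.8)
The plan is to prove the two-directional characterization first, and then derive the statement about $\rhard{I}=\rr{I}$ as a formal consequence. For the "only if" direction, suppose $I$ is a real $\ast$-ideal with $\dim(\chrisA/I)<\infty$. By Lemma \ref{fr}, the quotient $\chrisA/I$ is a formally real $\ast$-algebra, and by hypothesis it is finite-dimensional. Proposition \ref{structure} then gives that $\chrisA/I$ admits a faithful finite-dimensional $\ast$-representation $\rho\colon \chrisA/I\to \mathrm{End}(V)$. Composing with the quotient map $\chrisA\to\chrisA/I$ yields a finite-dimensional $\ast$-representation $\pi$ of $\chrisA$ whose kernel is exactly $I$ (faithfulness of $\rho$ is what makes $\ker\pi=I$ rather than something larger). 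For the "if" direction, suppose $I=\ker\pi$ for some finite-dimensional $\ast$-representation $\pi$ on $V_\pi$ (so $\dim V_\pi<\infty$). Then $\chrisA/I\cong \pi(\chrisA)\subseteq \mathrm{End}(V_\pi)$ has dimension at most $(\dim V_\pi)^2<\infty$. The kernel of any $\ast$-representation is a real $\ast$-ideal — this is recorded in the proof of Proposition \ref{prop:basicin} — so $I$ is real. This closes the equivalence.

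For the final assertion, suppose $\dim(\chrisA/\rr{I})<\infty$. Since $\rr{I}$ is a real $\ast$-ideal (Lemma \ref{lem:zi}\eqref{it:zi2} gives that it is a two-sided real ideal, hence a $\ast$-ideal by Lemma \ref{lem:zi}\eqref{it:zi1}), the equivalence just proved shows $\rr{I}=\ker\pi$ for some finite-dimensional $\ast$-representation $\pi$. In the concrete setting $\chrisA=\FA$, identify $\pi$ with the tuple $X=(\pi(x_1),\dots,\pi(x_g))\in\matof{n}{\F}{g}$ where $n=\dim V_\pi$; then $\ker\pi=\ihard{\{X\}}$. Since $X\in\vhard{I}_n$ (because $I\subseteq\rr{I}=\ker\pi$ means every $p\in I$ vanishes at $X$), we get $\rhard{I}=\ihard{\vhard{I}}\subseteq\ihard{\{X\}}=\rr{I}$. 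The reverse inclusion $\rr{I}\subseteq\rhard{I}$ is part of Proposition \ref{prop:basicin}. Hence $\rhard{I}=\rr{I}$.

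The main obstacle — really the only nontrivial point — is the "only if" direction, and it rests entirely on having Proposition \ref{structure} available: the structure theory of finite-dimensional formally real $\ast$-algebras is what guarantees a \emph{faithful} finite-dimensional $\ast$-representation, and without faithfulness the kernel of the composed representation could be strictly larger than $I$. Everything else is bookkeeping: translating between $\ast$-ideals of $\chrisA$ and $\ast$-ideals of the quotient, invoking Lemma \ref{fr} to pass between "real" and "formally real quotient," and unwinding the definitions of $\rhard{\cdot}$ and $\vhard{\cdot}$ in the free-algebra case. One should also be mildly careful that the statement as phrased allows a general $\ast$-algebra $\chrisA$ for the equivalence, whereas the concluding identity $\rhard{I}=\rr{I}$ is naturally read in $\FA$ (or, in the general-$\ast$-algebra language of the paper, over the class $\Pi$ of finite-dimensional $\ast$-representations, where $\rhard{I}$ means $\rchard{\Pi}{I}$); in that generality the same argument shows $\rchard{\Pi}{I}\subseteq\ker\pi=\rr{I}$, again combined with $\rr{I}\subseteq\rchard{\Pi}{I}$ from Proposition \ref{prop:basicin}.
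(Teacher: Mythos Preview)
Your proof is correct and follows essentially the same route as the paper: Lemma \ref{fr} plus Proposition \ref{structure} for the ``only if'' direction, and the observation that kernels of $\ast$-representations are real with finite-dimensional quotient for the ``if'' direction. You spell out the ``in particular'' clause more carefully than the paper does (the paper leaves it implicit after noting $I=\rhard{I}$ in the real finite-codimension case), and your care in distinguishing the general $\ast$-algebra setting from the free-algebra notation is appropriate.
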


\begin{proof} First, if $I$ is a real $\ast$-ideal with $\dim(\chrisA/I) < \infty$,
then $\chrisA/ I$ is a finite-dimensional formally real $\ast$-algebra.
Therefore, Proposition \ref{structure} implies that $\chrisA/I$ has a
finite-dimensional faithful $\ast$-representation, which implies that $I =\rhard{I}$.

Conversely, if $I =\operatorname{ker} \pi$ for some $\pi \in \Pi$, then
$\chrisA/I \cong \operatorname{im}\pi$ implies  $\dim(\chrisA/I)  < \infty$.  
Further, if \[\sum_i^{\fin} p_i^*p_i \in \operatorname{ker} \pi,\]
then $\sum_i \pi(p_i)^*\pi(p_i) = 0$, which implies that each $\pi(p_i) = 0$, or
equivalently, each $p_i \in \operatorname{ker} \pi$.  Therefore $I$ is real.
\end{proof}

An alternative proof of Proposition \ref{thm:mainfindim} \eqref{it:big}
(which does not generalize to arbitrary $\ast$-algebras) will be given
in Section \ref{sec:leftZeroes}.

\section{No General Real Nullstellensatz for Free $\ast$-Algebras}
\label{sec:mainex}
  This cautionary section gives the details of the Toeplitz
  algebra mentioned at the outset of Subsection \ref{sec:infinitecodim}.
  It also contains a discussion of the Weyl algebra which has no bounded
  representations and  the proof of Proposition \ref{prop:jaka}.
  The additional theory of $\ast$-algebras used in the proof of 
  Proposition \ref{prop:jaka} is developed in Subsection \ref{sec:ostar}.
  These examples, taken together, support the general theme that 
  Nullstellensatz for various representation classes impose 
  serious restrictions on a $\ast$-algebra.

\begin{example}
\label{ex:toeplitz}
 Let $\chrisA=\F\langle x,x^* \rangle$ denote the free $\ast$-algebra in one
variable $x$ and
 let $I$ be the $\ast$-ideal of $\chrisA$ generated by $1-x^\ast x$.
 The algebra $\chrisA/I$ is called \df{the Toeplitz algebra}.
 Let $\mathbf{X}$ denote the shift operator  on $\ell^2(\NN)$ and let $\pi \colon
\chrisA \to L(\ell^2(\NN))$ denote the 
  (bounded) $\ast$-representation of  evaluation at $\mathbf{X}$.
 As is shown below,  $I=\operatorname{Ker} \pi$.
 It follows that $I$ is a real ideal and it satisfies the Real Nullstellensatz
over $\cC=\{\pi\}$.
 (Hence it also satisfies the Real Nullstellensatz over the class  of all bounded $\ast$-representations.)
 
 Clearly $\mathbf{X}^\ast \mathbf{X}=\mathbf{I}$, so $I \subseteq
\operatorname{Ker} \pi$. Conversely, let
 $p$ be an element of $\chrisA$ and let $\sum_{i=0}^m \sum_{j=0}^n c_{ij} x^i
(x^\ast)^j$
 be its canonical form modulo $I$.  Suppose that $\pi(p)=0$. It follows that for
every integer $k$,
 $$0=(\sum_{i=0}^m \sum_{j=0}^n c_{ij} X^i (X^\ast)^j)e_k=\sum_{j=0}^k
\sum_{i=0}^m c_{ij} e_{k-j+i}$$
 where $e_0,e_1,e_2,\ldots$ is the standard basis of $\ell^2(\NN)$. For $k=0$ we
get that $c_{i0}=0$ for every $i$.
 For $k=1$ we deduce that $c_{i1}=0$ for every $i$ and so on. Hence
$I=\operatorname{Ker} \pi$.
 
 On the other hand, $I$ does not satisfy the real Nullstellensatz over the class
$\Pi$ of all
 finite-dimensional $\ast$-representations (i.e.  evaluations on same size
square matrices over $F$.)
 Namely, we will show that the element $1-xx^\ast$ belongs to $\rhard{I}$
but it does not belong to $I$.
 We already know that $I=\rr{I}$.
 For a square matrix $\mathbf{Y}$, the relation $\mathbf{Y}^T \mathbf{Y}=\mathbf{I}$ is
equivalent to 
 $\mathbf{Y} \mathbf{Y}^T=\mathbf{I}$, hence $1-xx^\ast \in \rhard{I}$.
 Since $(1-\mathbf{X}\mathbf{X}^\ast)e_0=e_0 \ne 0$, it follows that $1-xx^\ast
\not\in I$. \qed
\end{example}

The following is also standard.

\begin{example}
\label{ex:weyl}
Let $I$ be the $\ast$-ideal in $\chrisA=\F\langle a,a^* \rangle$ generated by
$aa^\ast-a^\ast a-1$.
The algebra $\chrisA/I$ is called \df{the Weyl algebra}. It has a faithful $\ast$-representation, 
see \cite[Example2]{sch2}), but it does not have any bounded $\ast$-representations, see \cite[Theorem 13.6]{rud}.
Hence $I$ satisfies the Real Nullstellensatz over $\cC=\{\pi_0\}$ but it does
not satisfy the
Real Nullstellensatz over the class of all bounded $\ast$-representations of $\chrisA$.
\qed
\end{example}

For nice generalizations of Examples \ref{ex:toeplitz} and \ref{ex:weyl} see
\cite{ss} and \cite{sch1}.

 The more elaborate negative example of Proposition \ref{prop:jaka} requires
  some additional theory of $\ast$-algebras which is developed
  in Subsection \ref{sec:ostar}. The proof of the proposition follows
 in Subsection \ref{sub:proofJaka}.

\subsection{$O^*$-Representable Algebras}
\label{sec:ostar}
For a pre-Hilbert space $V$, the $\ast$-algebra $L(V)$ of all adjointable
linear operators on $V$ is formally real. 
It follows that the kernel of any $\ast$-representation is a real two-sided
ideal. 
In particular, every \df{$O^\ast$-representable} algebra
(i.e. a $\ast$-algebra having a faithful $\ast$-representation) is formally
real.

\begin{lem}
\label{ostar}
For a $\ast$-ideal $I$ of a $\ast$-algebra $\chrisA$, the following are
equivalent.
\begin{enumerate}
\item $I=\rchard{\cR}{I}$,
\item $I = \ichard{\cR}{T}$ for a subset $T$ of $\cR$,
\item $I$ is the kernel of some $\ast$-representation,
\item $\chrisA/I$ is $O^\ast$-representable.
\end{enumerate}
\end{lem}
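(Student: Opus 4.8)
The plan is to prove Lemma \ref{ostar} by verifying the cycle of implications $(1)\Rightarrow(2)\Rightarrow(3)\Rightarrow(4)\Rightarrow(1)$, since each arrow is then a short observation. For $(1)\Rightarrow(2)$: by definition $\rchard{\cR}{I}=\ichard{\cR}{\vchard{\cR}{I}}$, so taking $T=\vchard{\cR}{I}\subseteq\cR$ exhibits $I$ in the required form. For $(2)\Rightarrow(3)$: given $I=\ichard{\cR}{T}=\bigcap_{\pi\in T}\ker\pi$, one forms the direct sum representation $\pi:=\bigoplus_{\pi\in T}\pi$ acting on the pre-Hilbert space $V_\pi:=\bigoplus_{\pi\in T}V_\pi$ (algebraic direct sum, so adjointability is preserved), and observes $\ker\pi=\bigcap_{\pi\in T}\ker\pi=I$; one should note the degenerate case $T=\emptyset$, where $I=\chrisA$ and the zero representation on $V=\{0\}$ works. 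For $(3)\Rightarrow(4)$: if $I=\ker\pi$ for a $\ast$-representation $\pi$, then $\chrisA/I\cong\operatorname{im}\pi$ as $\ast$-algebras, and $\operatorname{im}\pi$ is a $\ast$-subalgebra of $L(V_\pi)$ acting faithfully on $V_\pi$, so $\chrisA/I$ is $O^*$-representable essentially by definition.

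The one implication that carries a little content is $(4)\Rightarrow(1)$. Suppose $\chrisA/I$ is $O^*$-representable, say via a faithful $\ast$-representation $\rho$ of $\chrisA/I$ on a pre-Hilbert space $W$. Composing with the quotient map $q\colon\chrisA\to\chrisA/I$ yields a $\ast$-representation $\pi:=\rho\circ q$ of $\chrisA$ with $\ker\pi=\ker q=I$ (using faithfulness of $\rho$). Thus $\pi\in\vchard{\cR}{I}$, so $\rchard{\cR}{I}=\ichard{\cR}{\vchard{\cR}{I}}\subseteq\ker\pi=I$; the reverse inclusion $I\subseteq\rchard{\cR}{I}$ holds always (it is the first inclusion of Proposition \ref{prop:basicin}). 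Hence $I=\rchard{\cR}{I}$, which is $(1)$. I do not anticipate a genuine obstacle here: the only points requiring any care are making sure the algebraic direct sum of pre-Hilbert spaces is again a pre-Hilbert space on which each $\pi(a)$ is adjointable (it is, since only finitely many coordinates are nonzero on any given vector), and handling the edge cases $T=\emptyset$ and $I=\chrisA$ consistently with the conventions already in force in the paper.
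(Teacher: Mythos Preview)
Your proof is correct and essentially follows the paper's own argument: the same key constructions (the direct sum $\bigoplus_{\pi\in T}\pi$ for $(2)\Rightarrow(3)$, and composing a faithful representation of $\chrisA/I$ with the quotient map) appear in both. The only organizational difference is that you close the cycle $(4)\Rightarrow(1)$ directly, whereas the paper proves the pairwise equivalences $(1)\Leftrightarrow(2)$, $(2)\Leftrightarrow(3)$, $(3)\Leftrightarrow(4)$; in particular the paper gets $(2)\Rightarrow(1)$ from the Galois-connection inclusion $T\subseteq\vchard{\cR}{\ichard{\cR}{T}}$, which is implicit in your $(4)\Rightarrow(1)$ step.
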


\begin{proof}
Clearly, (1) implies (2). Conversely, (2) implies (1) because $\vchard{\cR}{\ichard{\cR}{T}} \supseteq T$ and
$\cI$ is inclusion-reversing. The implication
(3) implies (2) results from $\operatorname{Ker} \pi =\ichard{\cR}{\{\pi\}}$. To show that (2)
implies (3) take for $\pi$ 
the direct sum of all $\ast$-representations from $T$. The equivalence of (3)
and (4) is clear.
\end{proof}

Lemma \ref{ostar} will sometimes be used in combination with the following:

\begin{lem}
\label{hilba}
 Let $I$ be a $\ast$-ideal of  a $\ast$-algebra $\chrisA.$
  If there exists a  positive hermitian linear functional $L$ on $\chrisA$
   such that
\[
  I=\{a \in\chrisA \colon L(a^*a)=0\},
\]
 then the $\ast$-algebra $\chrisA/I$ is $O^\ast$-representable.
\end{lem}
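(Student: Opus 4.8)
The plan is to run the Gelfand--Naimark--Segal construction associated with $L$, purely algebraically. First I would put on $\chrisA$ the sesquilinear form $\langle a,b\rangle := L(b^\ast a)$. Positivity of $L$ gives $\langle a,a\rangle = L(a^\ast a)\ge 0$, and the hermitian property of $L$ gives $\langle b,a\rangle = \overline{\langle a,b\rangle}$; thus $\langle\,\cdot\,,\,\cdot\,\rangle$ is a positive semidefinite hermitian form, so the Cauchy--Schwarz inequality
\[
 |\langle a,b\rangle|^2 \le \langle a,a\rangle\,\langle b,b\rangle
\]
holds for all $a,b\in\chrisA$. Hence the radical $\{a\in\chrisA\colon\langle a,a\rangle=0\}$ of the form equals $\{a\in\chrisA\colon L(a^\ast a)=0\}=I$, so the form passes to a positive definite inner product on the quotient vector space $V:=\chrisA/I$, making $V$ a pre-Hilbert space over $\F$ with $\langle a+I,b+I\rangle = L(b^\ast a)$.

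Next I would introduce the left regular representation $\pi\colon\chrisA\to L(V)$ defined by $\pi(a)(b+I):=ab+I$. It is well defined because $I$ is a left ideal, and it is plainly a unital algebra homomorphism. The one substantive point is that each $\pi(a)$ is adjointable with $\pi(a)^\ast=\pi(a^\ast)$: for all $b,c\in\chrisA$,
\[
 \langle \pi(a)(b+I),\,c+I\rangle = L(c^\ast a b) = L\bigl((a^\ast c)^\ast b\bigr) = \langle b+I,\,\pi(a^\ast)(c+I)\rangle .
\]
Since adjointable operators on a pre-Hilbert space form a $\ast$-algebra (as recalled at the start of this subsection), $\pi$ is a $\ast$-representation of $\chrisA$ on $V$.

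Finally I would identify $\operatorname{Ker}\pi$. We have $\pi(a)=0$ exactly when $a\chrisA\subseteq I$. If $a\in\operatorname{Ker}\pi$ then, evaluating at $b=1$, we get $a\in I$; conversely, since a $\ast$-ideal is two-sided, $a\in I$ forces $ab\in I$ for all $b$, i.e.\ $a\in\operatorname{Ker}\pi$. (Equivalently, $\operatorname{Ker}\pi=Z(I)=I$ in the notation of Lemma \ref{lem:zi0}.) Thus $\pi$ descends to a faithful $\ast$-representation $\bar\pi\colon\chrisA/I\to L(V)$, which is precisely the assertion that $\chrisA/I$ is $O^\ast$-representable; alternatively, having shown $I=\operatorname{Ker}\pi$, one may simply invoke the equivalence of (3) and (4) in Lemma \ref{ostar} for the ideal $I$.

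The argument has no genuinely hard step -- it is the classical GNS construction -- and the two places needing a little care are: (a) the use of Cauchy--Schwarz for the a priori degenerate form to pin its radical down to $I$, which is exactly where both the positivity and the hermitian property of $L$ enter; and (b) the verification that the left-multiplication operators are adjointable on the pre-Hilbert space $V$, after which the hypothesis that $I$ is a $\ast$-ideal (hence two-sided) is what makes $\operatorname{Ker}\pi$ equal to $I$ rather than merely contained in it.
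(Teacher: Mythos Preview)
Your proof is correct and follows essentially the same approach as the paper: both run the GNS construction for $L$, equipping $\chrisA/I$ with the inner product $\langle [a],[b]\rangle=L(b^\ast a)$, verifying that the left regular representation is a $\ast$-representation, and then identifying its kernel with $I$ (the paper phrases this last step via Lemma~\ref{lem:zi0}, you via the two-sidedness of a $\ast$-ideal directly). Your version is more explicit about why the form descends to the quotient (Cauchy--Schwarz), but there is no substantive difference in strategy.
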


\begin{proof}
Note that for a left ideal $I$ of $\chrisA$, the following are equivalent:
\begin{enumerate}
\item There exists a positive hermitian linear functional $L$ on $\chrisA$ such
that $I=\{a \in \chrisA \colon L(a^*a)=0\}$.
\item There exists an inner product $\langle \cdot,\cdot \rangle$ on the vector
space $\chrisA/I$ such that
$\langle [x y],[z] \rangle=\langle [y],[x^* z] \rangle$ for every $x,y,z \in
\chrisA$.
\item There exists an inner product on the $\ast$-algebra $\chrisA/I$ such that
the left regular representation of $\chrisA$ on the pre-Hilbert space
$\chrisA/I$ is a $\ast$-representation.
\end{enumerate}
Namely, to show that (1) implies (2), take $\langle [y],[z] \rangle := L(z^*y)$
and to show that (2) implies (1), take $L(x):=\langle [x],[1] \rangle$.
Clearly, (3) just rephrases (2). Finally (3) implies the claim because, 
 by Lemma \ref{lem:zi0}, 
the kernel of the left regular representation of $\chrisA$ on $\chrisA/I$ 
is equal to the largest two-sided ideal contained in $I$.
\end{proof}

Let $\cB=\cB(\chrisA)$ be the class of all bounded $\ast$-representations of
$\chrisA$. Note that a $\ast$-ideal
$I$ of $\chrisA$ satisfies $I=\rchard{\cB}{I}$ if and only if $R_\ast(\chrisA/I):=
\bigcap_{\pi \in \cB(\chrisA/I)} \operatorname{Ker} \pi=\{0\}$.

\subsection{Proof of Proposition \ref{prop:jaka}}
\label{sub:proofJaka}

\begin{proof}
In \cite{pop1}, it was shown that $\chrisA/I$ is formally real but not
$O^\ast$-representable. An alternative simpler 
 proof of this fact is given  below. Lemmas \ref{fr} and \ref{ostar}
now imply that
$$\rr{I}=I \ne \rchard{\cR}{I}.$$
In particular, $I$ is real. Since $\rchard{\cR}{I} \subseteq \rchard{\cC}{I}$ for
every representation class $\cC$, we have
$$\rr{I} \ne \rchard{\cC}{I}.$$
So $I$ does not satisfy the real Nullstellensatz over any representation class
$\cC$.
\end{proof}

\begin{prop}[\cite{pop1}]
  The $\ast$-algebra $\chrisA/I$ from Proposition \ref{prop:jaka} is formally real but is not $O^\ast$-representable. 
\end{prop}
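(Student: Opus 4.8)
The statement has two halves. The genuinely clean part is the failure of $O^\ast$-representability; formal reality is the laborious part, and there the plan is to follow (and streamline) the argument of \cite{pop1} via an explicit normal form. So first I would prove that $\chrisA/I$ has no faithful $\ast$-representation, and in fact that $a$ itself lies in the kernel of \emph{every} $\ast$-representation. The key initial observation is that every $\ast$-representation of $\chrisA/I$ is automatically bounded: if $\pi$ is a $\ast$-representation on a pre-Hilbert space $V$, then the relation $xx^\ast+aa^\ast-1\in I$ gives $\pi(x)\pi(x)^\ast+\pi(a)\pi(a)^\ast=1$ on $V$, so $\|\pi(a)^\ast v\|^2=\|v\|^2-\|\pi(x)^\ast v\|^2\le\|v\|^2$ and likewise $\|\pi(x)^\ast v\|\le\|v\|$ for all $v\in V$; hence $\pi(a)^\ast,\pi(x)^\ast$ are bounded by $1$, and by density of $V$ in its completion their adjoints $\pi(a),\pi(x)$ are bounded as well. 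Thus $\pi$ extends to a bounded $\ast$-representation on a Hilbert space satisfying the same two relations.

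On that Hilbert space, put $A=\pi(a)$. From $A^\ast A=qAA^\ast$ I would argue spectrally: $\sigma(A^\ast A)=q\,\sigma(AA^\ast)$ while $\sigma(A^\ast A)\setminus\{0\}=\sigma(AA^\ast)\setminus\{0\}$, so the set $S:=\sigma(AA^\ast)\setminus\{0\}$ satisfies $S=qS$; since $0<q<1$, any $\lambda\in S$ would force $\lambda q^{-n}\in S$ for all $n$, contradicting boundedness of $\sigma(AA^\ast)$. Hence $S=\emptyset$, so the positive operator $AA^\ast$ has spectrum $\{0\}$, i.e.\ $AA^\ast=0$, i.e.\ $\pi(a)=0$. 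Therefore $a\in\ker\pi$ for every $\ast$-representation $\pi$. To conclude non-representability it then suffices to check $a\neq 0$ in $\chrisA/I$, which I would do via Bergman's Diamond Lemma: orient the two defining relators as rewriting rules $a^\ast a\to q\,aa^\ast$ and $xx^\ast\to 1-aa^\ast$; for the degree-lexicographic order induced by $a<x<a^\ast<x^\ast$ both rules are reducing, and the two left-hand words $a^\ast a$ and $xx^\ast$ have no overlap ambiguities and neither is a subword of the other, so there are no ambiguities at all and the irreducible words form an $\F$-basis of $\chrisA/I$. Since $a$ is itself an irreducible word, $a\neq 0$ in $\chrisA/I$, so no $\ast$-representation of $\chrisA/I$ is faithful.

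For formal reality one cannot use representations at all — indeed by Lemma \ref{hilba} a faithful positive hermitian functional on $\chrisA/I$ would force $O^\ast$-representability, which we have just ruled out — so the argument must be purely algebraic, carried out in the basis of irreducible words just produced. Here I would first exploit the $\mathbb{Z}$-grading in which $\deg a=\deg x=1$ and $\deg a^\ast=\deg x^\ast=-1$: both relators are homogeneous of degree $0$, so $\chrisA/I$ is $\mathbb{Z}$-graded, the involution reverses the grading, and taking degree-$0$ components reduces the hypothesis $\sum_i b_i^\ast b_i=0$ to the case where each $b_i$ is homogeneous. Then I would pass to the associated graded algebra for the filtration by word-length in which $x,x^\ast$ carry large weight; under this degeneration $xx^\ast+aa^\ast-1$ becomes simply $xx^\ast$, the algebra acquires a second grading, and the rewriting $a^\ast a\to q\,aa^\ast$ preserves weighted length while never creating the forbidden subword $xx^\ast$. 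Consequently the leading (top weighted-length) part of a square $b^\ast b$, reduced to normal form, is a sum $\sum_{v,w}\overline{c_v}\,c_w\,q^{k(v,w)}\,u(v,w)$ in which every exponent $k(v,w)$ is $\ge 0$; using $q>0$ one checks that this leading part cannot vanish unless all the $c_v$ vanish, and then one peels off leading parts inductively (and argues properness of $\chrisA/I$ the same way).

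The clean steps are the automatic boundedness of $\ast$-representations, the spectral argument killing $a$, and the ambiguity-free Diamond Lemma. The main obstacle is the last step: controlling the normal form of $\chrisA/I$ and of the products $v^\ast w$ of irreducible words precisely enough to see that the leading term of $\sum_i b_i^\ast b_i$ is a genuinely positive combination of the coefficients $c_v^i$ — that is, organizing the combinatorics so that the sign hypothesis $0<q<1$ does its work and cancellations are impossible. This bookkeeping, together with the choice of the correct degeneration of $\chrisA/I$, is exactly where one follows and simplifies \cite{pop1}.
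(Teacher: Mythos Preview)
Your non-$O^\ast$-representability argument is correct but follows a genuinely different route from the paper. The paper never invokes boundedness or spectra; it works purely in the sums-of-squares preorder on $\chrisA/I$, observing that $aa^\ast\le 1$ gives $a^\ast a\le q$, and then from the algebraic identity $k(k-aa^\ast)=(k-aa^\ast)^2+a(k-a^\ast a)a^\ast$ bootstraps $a^\ast a\le k\Rightarrow aa^\ast\le k\Rightarrow a^\ast a\le qk$, hence $a^\ast a\le q^m$ for all $m$, forcing $\pi(a)=0$ in any $\ast$-representation. Your analytic argument (automatic boundedness from $xx^\ast+aa^\ast=1$, then the spectral identity $\sigma(A^\ast A)\setminus\{0\}=\sigma(AA^\ast)\setminus\{0\}$ together with $A^\ast A=qAA^\ast$) is a clean alternative; it buys you the conclusion with no algebraic identity, at the cost of passing through the Hilbert-space completion. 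Your Diamond-Lemma verification that $a\neq 0$ is fine, though note you chose a different normal form than the paper: you eliminate $a^\ast a$ and $xx^\ast$, whereas the paper eliminates $a^\ast a$ and $aa^\ast$ (via $aa^\ast=1-xx^\ast$ and $a^\ast a=q(1-xx^\ast)$).

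For formal reality, however, the degeneration you sketch does not work as stated. In the associated graded algebra for your weighted filtration the relation $xx^\ast+aa^\ast-1$ becomes $xx^\ast=0$, and that algebra is \emph{not} formally real: $x^\ast\neq 0$ is an irreducible word, yet $(x^\ast)^\ast x^\ast=xx^\ast=0$. So positivity of the leading part of $b^\ast b$ fails already for $b=x^\ast$, and you cannot simply lift formal reality from the degeneration. The paper avoids this by using the \emph{other} normal form (no $aa^\ast$ or $a^\ast a$ subwords), writing each $p_i$ as $\sum_j z_{ij}(a^\ast)^j+\sum_k w_{ik}a^k+t_i$ with $z_{ij},w_{ik},t_i$ not ending in $a$ or $a^\ast$, and then using the explicit reductions of $(a^\ast)^m a^m$ and $a^m(a^\ast)^m$ to isolate the \emph{lowest}-degree part of $\sum_i p_ip_i^\ast$ as a genuine sum of squares $\sum q^j z'_{ij}(z'_{ij})^\ast+\sum w'_{ik}(w'_{ik})^\ast+\sum t'_i(t'_i)^\ast$ in the free algebra. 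The choice of normal form is what makes this possible; with your normal form the combinatorics do not organize themselves the same way. If you want to salvage your outline, you would need to switch to the paper's normal form and argue with the minimal-degree piece rather than a top-weight degeneration.
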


\begin{proof}
To prove that $\chrisA/I$ is not $O^\ast$-representable we define a relation
$\le$ on $\chrisA/I$ by 
$u \le v$ if and only if  $v-u \in \Sigma_{\chrisA/I}$. 
By the second relation $aa^\ast \le 1$, hence $a^\ast a \le q$ by the
first relation. Suppose that $a^\ast a\le k$ for some $k \in \RR$.
Since $k(k-aa^\ast)=(k-aa^\ast)^2+a(k-a^\ast a)a^\ast$,
it follows that $a a^\ast  \le k$. Hence $a^\ast a\le q k$ by the first
relation. By induction $a^\ast a \le q^m$ for every $m$.
It follows that $a$ is in the kernel of every  $\ast$-representation of
$\chrisA/I$. 

To prove that $\chrisA/I$ is formally real we use the relations
$aa^\ast=1-xx^\ast$ and $a^\ast a=q(1-xx^\ast)$ to 
reduce each element of $\chrisA/I$ to its \df{canonical form}
whose monomials do not contain $a^\ast a$ or $aa^\ast$ as subwords.
Such monomials are linearly independent, so the canonical form is unique. 
Suppose that 
$$\sum_{i=1}^n p_i p_i^\ast=0$$
for some nonzero $p_1,\ldots,p_k \in \chrisA/I$.
Write each $p_i$ in its canonical form as 
$$p_i=\sum_{j=1}^{m_i} z_{ij} (a^\ast)^j+\sum_{k=1}^{m_i} w_{ik} a^k +t_i$$
where neither of the monomials in $z_{ij},w_{kj},t_i$ has either $a$ or $a^\ast$
as its last letter. Let $d$ be the
minimum of degrees of all monomials that appear in $z_{ij},w_{kj},t_i$. (Note
that degree is well-defined for canonical forms.)
Write $z_{ij}=z_{ij}'+z_{ij}''$, $w_{ik} = w_{ik}' + w_{ik}''$, and $t_{i} =
t_i' + t_i''$, where all monomials in $z_{ij}'$, $w_{ik}'$, and $t_{i}'$ have
degree $d$ while those in $z_{ij}''$, $w_{ik}''$, and $t_{i}''$ have degrees
$>d$.
Since
$$(a^\ast)^m a^m=q^m-\sum_{l=0}^{m-1}q^{m-l}(a^\ast)^l xx^\ast a^l 
\quad \text{and} \quad
a^m (a^\ast)^m=1-\sum_{l=0}^{m-1}a^l xx^\ast (a^\ast)^l$$
for each $m$, the canonical form of $\sum_{i=1}^n p_i p_i^\ast$ is equal to
$s+o$ where
\begin{equation}
\label{eq1}
s:=\sum_{i,j} q^j z_{ij}'(z_{ij}')^\ast+\sum_{i,k} w_{ik}' (w_{ik}')^\ast
+\sum_i t_i' (t_i')^\ast
\end{equation}
consists of monomials of degree $2d$ and $o$ consists of monomials of degree
$>2d$. By the uniqueness of the
canonical form, we have that 
\begin{equation}
\label{eq2}
s=0.
\end{equation}
Let us order the monomials that appear in $z_{ij}',w_{ik}',t_i'$
lexicographically and let $m$ be the first of them. Therefore, we can rewrite
\eqref{eq1} and \eqref{eq2} as
\begin{equation}
\label{eq3}
0=\sum_l (\alpha_l m+r_l)(\alpha_l m+r_l)^\ast
\end{equation}
with $m$ before each monomial of every $r_l$. Since 
$y_1:=\sum_l (\alpha_l m)(\alpha_l m+r_l)^\ast$ and $y_2:=\sum_l r_l(\alpha_l
m+r_l)^\ast$
have disjoint monomials and $y_1+y_2=0$ by \eqref{eq3}, it follows that
$y_1=y_2=0$. 
Canceling $m$ in $y_1^\ast=0$ gives $\sum_i \bar{\alpha}_l(\alpha_l m+r_l)=0$.
Consequently, $\sum_l \bar{\alpha}_l \alpha_l=0$, a contradiction with $p_i \ne
0$.
\end{proof}

On the positive side, every $\ast$-ideal generated by unshrinkable words (i.e.
words not decomposable as $d^\ast d u$
or $u d^\ast d$ where $d,u$ are words and $d$ is nonempty) is real and satisfies
the Real Nullstellensatz over $\cR$,
see \cite{chkmn,pop1,pop2}. It is not known yet if such ideals satisfy the Real
Nullstellensatz over the class $\cB$. 
Many more positive examples are given in Section 4 of \cite{pop1}.

\section{Ideals Generated by Analytic Polynomials}
\label{sec:analytic}
  A classical commutative polynomial 
  of several complex variables is analytic
  if it depends only on $z=(z_1,\dots,z_g)$ 
  (and not on $\overline{z}$).
  By analogy, a polynomial in $\F\axs$ is \df{analytic} if it does not contain
  any  $x_j^*$ variables. For example, $ p=x_1 x_2 +x_1$
  is analytic and $p= x_1^* x_2 $ is not. 
  Let $\F\ax$ denote the analytic polynomials in $\FA$.

 Let $I(P)$ denote the $\ast$-ideal generated by a collection  $P$ (not necessarily finite)
  of analytic polynomials. 
  Recall that $I(P)$ has the \df{(hard) Nullstellensatz property} if
\[
  \rhard{I(P)}  = I(P).
\]

 The ideal $I_p=I(\{p\})$  generated by the single polynomial 
  $p = x_1x_2 - x_2x_1 + 1$ does not have the  hard
  Nullstellensatz property.  Indeed, $p(X)$ is never equal to $0$
 since $\operatorname{Tr}(p(X)) = \operatorname{Tr}(1) >0 $.  
   Thus $\rhard{I_p} = \F\axs.$   However,   $I_p\ne\F\axs$ 
  (the Gr\"obner basis for $I$ is $\{p\}$, and so $1 \not\in I_p$). 
  On the other hand, we do not know
 of a polynomial $p$ for which $p(X) =0$ has a solution
 but $\rhard{I_p} \neq I_p$.

In this section we give conditions on $P$ which imply that
  $I(P)$ has the hard Nullstellensatz property.
 For instance,  
 Theorem \ref{thm:analHardNSS} says if $I(P)$ is 
homogeneous, then it has the Nullstellensatz property.  
  We note that left ideals with analytic generators have a good
\nss\ (see \cite{HMP07}) using zeros of the type as described
  in Section \ref{sec:leftZeroes}.

We start by introducing Gr\"obner basis machinery which is required to prove
Theorem \ref{thm:analHardNSS}.

\subsection{Non-Commutative Gr\"obner Bases}
A \df{monomial order} $\prec$ on $\axs$ is a total order
on the elements of $\axs$ with the following properties.
\begin{enumerate}
 \item $\prec$ is a well ordering, that is, each nonempty subset of $\axs$ has a
minimal element.
\item If $a, b \in \axs$ with $a \prec b$, and $c \in \axs$, then $ca \prec cb$
and $ac \prec bc$.
\end{enumerate}
Given a monomial order $\prec$, every nonzero polynomial $p \in \F\axs$
can be written as $p=\sum_{i=1}^s c_i t_i$ where $c_1,\ldots,c_s$ are nonzero
elements of $\F$
and $t_1 \succ \ldots \succ t_s$ belong to $\axs$. In this case, $\lm{p}:= t_1$ is
the \df{leading monomial} of $p$, according to $\prec$, and $\lc{p}:= c_1$
is the \df{leading coefficient} of $p$, according to $\prec$.
We say that $p$ is \df{monic} if $\lc{p} =1$.
Given two words $a, b \in \axs$, we say $a$ \df{divides} $b$ if
$b = cad$ for some $c, d \in \axs$.

Given a two-sided ideal $I \subset \F\axs$, a \df{reduced Gr\"obner basis}
for
$I$ is a set $G \subset I$ with the following properties:
\begin{enumerate}
 \item for each $f \in I$ there exists $g \in G$ such that $\lm{g}$ divides
$\lm{f}$.
 \item each element of $G$ is monic, and
 \item if $g_1$ and $g_2$ are distinct elements of
$G$, then $\lm{g_1}$ does
not divide any term of $g_2$.
\end{enumerate}

By \cite[Proposition 1.1]{tm}, every two-sided ideal of $\F \axs$ has a unique
reduced Gr\" obner basis.

 Let $\ax \subset \axs$ be the set of \df{analytic monomials} in $\axs$,
and let $\F\ax \subset \F\axs$ be the set of \df{analytic polynomials} in
$\F\axs$.

\begin{prop}
\label{prop:GBofAnStarIdeal}
Let $\prec$ is a monomial order on $\F \axs$.
 Let $I \subset \F\axs$ be a $\ast$-ideal generated by some
nonzero analytic polynomials.  Then the reduced Gr\"obner basis for $I$ is of
the form $G \cup H^*$, where $G$ and $H$ consist entirely of analytic
polynomials.
\end{prop}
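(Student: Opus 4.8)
The plan is to understand how Buchberger-type reduction interacts with the analytic/anti-analytic dichotomy of the generators, and then argue that the reduced Gröbner basis $G_0$ of $I$ splits cleanly. First I would record the relevant ``symmetry'' of $I$: since $I$ is a $\ast$-ideal, $f \in I$ iff $f^\ast \in I$. Next, I would isolate the structural feature of the generating set: $I$ is generated (as a two-sided ideal) by a set $\{p_i\} \cup \{p_i^\ast\}$ where each $p_i \in \F\ax$ is analytic, so each $p_i^\ast$ lies in $\F\langle \xs\rangle$, the anti-analytic polynomials. A word $w \in \axs$ will be called analytic if $w \in \ax$ and anti-analytic if $w \in \langle \xs\rangle$. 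The key combinatorial observation is that any word that occurs as a subword of an analytic word is analytic, and likewise for anti-analytic words, so divisibility respects these two classes.

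The heart of the argument is to track leading monomials through the Gröbner basis computation. Starting from the generating set $\{p_i\}\cup\{p_i^\ast\}$, the reduced Gröbner basis is obtained by forming overlap (S-polynomial) relations and reducing. I would argue by induction on the steps of Buchberger's algorithm that every intermediate basis element is either analytic or anti-analytic, and moreover that the anti-analytic ones are exactly the adjoints of the analytic ones. For the inductive step there are two points to check: (1) an S-polynomial formed from two analytic elements is again analytic — this is immediate since it is an $\F$-linear combination of left/right multiples $u\, g - g'\, v$ that, after the leading terms cancel, only involves words appearing in analytic polynomials, hence analytic words; symmetrically for two anti-analytic elements; and (2) an S-polynomial formed from one analytic element $g$ and one anti-analytic element $h^\ast$ requires an overlap of $\lm{g}$ (analytic) with $\lm{h^\ast}$ (anti-analytic). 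Such an overlap word would have to be simultaneously a suffix of an analytic word and a prefix of an anti-analytic word (or vice versa); the only word that is both analytic and anti-analytic is the empty word $\emptyset$, and a nontrivial overlap cannot be the empty word. So no ``mixed'' S-polynomials arise, and the two halves of the computation never interact. Finally, by the $\ast$-symmetry, whenever an analytic element $g$ is produced, applying $\ast$ to the entire computation produces $g^\ast$ as an anti-analytic element (using that $\lm{g^\ast} = \lm{g}^\ast$, which holds provided the monomial order is compatible with $\ast$; if $\prec$ is arbitrary one instead argues directly that the anti-analytic part of the basis is closed under the same reductions run ``in reverse'', so it is the reduced Gröbner basis of $I \cap \F\langle\xs\rangle$-generated portion, and coincides with $\{g^\ast : g \in G\}$ up to rescaling to monic form). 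Uniqueness of the reduced Gröbner basis (cited from \cite{tm}) then forces $G_0 = G \cup H^\ast$ with $G, H \subseteq \F\ax$; indeed one may take $H$ to be the analytic polynomials whose adjoints are the anti-analytic elements of $G_0$.

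The main obstacle I anticipate is the interaction between an arbitrary monomial order $\prec$ and the involution: there is no reason in general that $w \prec w'$ implies $w^\ast \prec (w')^\ast$, so the naive ``apply $\ast$ to everything'' argument needs care. The cleanest fix is to avoid relying on $\ast$-compatibility of $\prec$ altogether and instead run the Buchberger argument purely combinatorially on both halves, proving (a) the analytic elements of the reduced basis $G_0$ form the reduced Gröbner basis of the ideal they generate, and their leading monomials are analytic, (b) the remaining elements of $G_0$ are anti-analytic, by the no-mixed-overlap argument above, and (c) a symmetry/minimality argument, using $I = I^\ast$ and uniqueness from \cite[Proposition 1.1]{tm}, to conclude the anti-analytic part is precisely $\{h^\ast : h \in H\}$ for some set $H$ of analytic polynomials. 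One must also handle the reduction step in the definition of reduced Gröbner basis — condition (3), that no leading monomial divides a term of another basis element — which again splits along analytic vs.\ anti-analytic lines because a term of an analytic polynomial is an analytic word and cannot be divisible by an anti-analytic leading monomial unless that monomial is $\emptyset$, which is excluded since basis elements are nonzero and the ideal is proper.
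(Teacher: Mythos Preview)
Your approach is correct and genuinely different from the paper's. Both arguments exploit that analytic and anti-analytic words have no nontrivial common subwords, but they deploy this fact very differently.

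The paper does not run Buchberger's algorithm. Instead it \emph{guesses} the answer: take $G$ to be the reduced Gr\"obner basis of $I\cap\F\ax$ and $H^\ast$ the reduced Gr\"obner basis of $I\cap\F\langle\xs\rangle$, and then verify directly that $G\cup H^\ast$ satisfies the three defining conditions of a reduced Gr\"obner basis for $I$. The nontrivial part is condition (1), for which the paper proves two structural claims: every $f\in I$ is a linear combination of alternating products $p_1\cdots p_k$ with each $p_i$ either a normal-form monomial or a multiple of an element of $G\cup H^\ast$ (Claim 1), and any two such products with the same leading monomial differ by another such combination (Claim 2). An induction on the leading monomial then finishes. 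Your route via Buchberger and the ``no mixed overlaps'' observation is conceptually cleaner---the single combinatorial fact that a nonempty word cannot be simultaneously a subword of an analytic and of an anti-analytic word does all the work---whereas the paper's argument is heavier but entirely self-contained and does not rely on properties of the (possibly non-terminating) noncommutative completion procedure.

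Two small remarks on your write-up. First, your concern about $\ast$-compatibility of $\prec$ is a red herring for the stated result: the proposition only asserts the basis splits as $G\cup H^\ast$ with $G,H$ analytic, not that $G=H$, and the former is immediate once you know every basis element is analytic or anti-analytic. Second, in the noncommutative setting you should be explicit that Buchberger--Mora completion may not terminate; the cleanest fix is to bypass the iterative picture and instead invoke Buchberger's criterion directly on the candidate set $G\cup H^\ast$ (S-polynomials within $G$ reduce to zero by $G$, within $H^\ast$ by $H^\ast$, and there are no mixed S-polynomials), which immediately shows it is a Gr\"obner basis without tracking an infinite process.
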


\begin{proof}
 Let $G$ be the reduced Gr\"obner basis for $I \cap \F\ax$ and let $H^*$ be the
reduced Gr\"obner basis for $I^* \cap \F\ax^*$. Since $I$ is
generated by some analytic polynomials as a $\ast$-ideal, it is generated by
some analytic and some antianalytic
polynomials as a two-sided ideal. Therefore $G\cup H^*$ generates $I$ as a
two-sided ideal in $\F\axs$.
It is clear that $G \cup H^*$ satisfies conditions (2) and (3) of being a
reduced Gr\"obner basis for $I$.
To prove that $G \cup H^*$ satisfies condition (1) we need the following:

\vspace{.5 em}

\noindent \textit{Claim 1. \ 
Every element of $I$ is a linear combination of polynomials of the form
\begin{equation}
 \label{eq:conditionsPi}
p_1p_2 \cdots p_k,
\end{equation}
where $p_i$ alternate between analytic and antianalytic polynomials and each
$p_i$ is either
\begin{enumerate}[(a)]
\item  a nonconstant monomial which is not the leading monomial of an
element of $I;$ or 
\item  a polynomial $asb$, where $a,b \in \axs$ and $s \in G \cup H^*$.
\end{enumerate}
Moreover, we can assume that at least one $p_i$ in each product is of the second
form.
}
\medskip

Since $G\cup H^*$ generates $I$ as a two-sided ideal, each $f \in I$ is of the
form 
\[f=\sum_{j, \text{finite}} a_j g_j b_j+\sum_{j', \text{finite}} c_{j'}
h_{j'}^*d_{j'}\]
where $g_j \in G$, $h_{j'} \in H$ and $a_j,b_j,c_{j'},d_{j'} \in \F \axs$.
Now, every element of $\F \axs$ is clearly a linear combination of products $r_1
\cdots r_m$
where $r_i$ alternate between analytic and antianalytic monomials.
If $r_i$ is analytic then, by \cite[Theorem 1.1, assertion 1]{tm}, we have that 
$r_i=\iota_i+\omega_i$ for some $\iota_i \in I \cap \F \ax$ and some $\omega_i$
which is a linear combination of analytic monomials that are not the leading
monomial of some element of $I \cap \F \ax$.
Further, $\iota_i=\sum_{i', \text{ finite}} a_{i'} g_{i'} b_{i'}$ for some
$g_{i'} \in G$ and $a_{i'},b_{i'} \in \F \ax$.
A similar argument prevails if $r_i$ is antianalytic thus
 completing the proof of Claim 1. 

\vspace{.5 em}

\noindent
\textit{Claim 2. \ 
If $p_1 \cdots p_k$ and $q_1 \cdots q_{\ell}$ are of the form
\eqref{eq:conditionsPi} and
\[\lm{p_1 \cdots p_k} = \lm{q_1 \cdots q_{\ell}},\]
then $q_1 \cdots q_k -  p_1 \cdots p_k$ is a linear combination of polynomials
of the form
\eqref{eq:conditionsPi}.
}
\medskip

The assumption implies that $\lm{p_1} \cdots \lm{p_k} = \lm{q_1} \cdots
\lm{q_{\ell}}$.
It follows that $k = \ell$ and $T(p_i)=T(q_i)$ for each $i$. Moreover, for each
$i$:
\begin{enumerate}[(a)]
\item either $\lm{p_i} = p_i = q_i;$ or 
\item $p_i$ and $q_i$ are both analytic and in the ideal generated by $G;$ or
\item $p_i$ and $q_i$ are both antianalytic and in the ideal generated by
$H^*$.
\end{enumerate}
Consequently, 
$q_1 \cdots q_k -  p_1 \cdots p_k = q_1 \cdots q_k - (p_1 - q_1 + q_1) \cdots
(p_k - q_k + q_k) = \bar{q}$
where $\lm{\bar{q}} \prec \lm{q_1 \cdots q_k}$
and $\bar{q}$ is a linear combination of polynomials of the form
\eqref{eq:conditionsPi},  proving  Claim 2.

\medskip

To complete the proof of the proposition, let $f \in I$ be given.
 By Claim 1, 
\[
f=\sum_{k=1}^n c_k z_k
\]
where $c_k \in \F$ and $z_k$ are of the form \eqref{eq:conditionsPi}.
It can be  assumed that there is an  $m$ such that
  $\lm{z_1} = \ldots = \lm{z_m} \succ
\lm{z_{m+1}},\ldots,\lm{z_n}$.
 For each $k=2,\ldots, m,$ Claim 2 implies  $z_k-z_1$ is a linear combination 
of polynomials of the form \eqref{eq:conditionsPi} and $\lm{z_k-z_1} \prec
\lm{z_1}$.
It follows that
\[
f=(\sum_{k=1}^m c_k) z_1+\bar{z}
\]
where $\lm{\bar{z}} \prec \lm{z_1}$.
Now, if $\sum_{k=1}^m c_k \ne 0$, then $\lm{f}=\lm{z_1}$ is divisible by 
the leading coefficient of an element of $G \cup H^*$. If $\sum_{k=1}^m c_k =
0$,
we continue by induction. Therefore the leading monomial of every element of $I$
is
in the ideal generated by the leading monomials of elements of $G \cup H^*$.
\end{proof}

\subsection{$\ast$  Ideals with Analytic Generators}
 A \df{graded order} $\prec$ on $\axs$ is a monomial order
such that $a \prec b$ if $\deg a < \deg b$. (Recall that $\deg a$ is the number
of letters in the word $a$).

\begin{lem}
\label{lem:IplusW}
  Let $I \subset \F\axs$ be a two-sided ideal.
 If  $\prec$ be a graded order on $\axs$ and if
  $W$ is the space spanned by all monomials which are not the leading
monomial of an element of $I$,   then $\F\axs = I \oplus W$.  Further, if
$p = \iota + \omega \in \F\axs$, with $\iota \in I$ and $\omega
\in W$, then $\deg(p) = \max\{\deg(\iota), \deg(\omega)\}$.
\end{lem}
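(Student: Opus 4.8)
The plan is to obtain the decomposition $\F\axs = I \oplus W$ from the standard noncommutative Gr\"obner basis reduction, and then to extract the degree statement by tracking degrees through that reduction. First I would verify $I \cap W = \{0\}$: a nonzero $\omega \in W$ is a linear combination of monomials, none of which is the leading monomial of an element of $I$, so in particular $\lm{\omega}$ is not such a leading monomial; but a nonzero $p \in I$ does have $\lm{p}$ equal to the leading monomial of an element of $I$, namely of $p$ itself. Hence no nonzero element can lie in both $I$ and $W$.

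Next I would show $I + W = \F\axs$ by induction on $\lm{p}$ with respect to the well-order $\prec$ (the case $p = 0$ being trivial). Given $p \neq 0$, set $t = \lm{p}$ and $c = \lc{p}$. If $t$ is not the leading monomial of any element of $I$, then $ct \in W$ and $\lm{p - ct} \prec t$ (or $p - ct = 0$); applying the inductive hypothesis to $p - ct$ and adding $ct$ back to the $W$-part yields a decomposition of $p$. If instead $t = \lm{g}$ for some $g \in I$, rescale so that $\lc{g} = 1$; then $p - cg \in I + (\text{the same } W\text{-coset as } p)$ and $\lm{p - cg} \prec t$, so the inductive hypothesis applied to $p - cg$ finishes this case as well. (This step is essentially \cite[Theorem 1.1]{tm} and could be quoted directly.)

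For the degree statement the hypothesis that $\prec$ is graded enters: for any nonzero $q \in \F\axs$ one has $\deg(\lm{q}) = \deg q$, since a monomial of $q$ of strictly larger degree would be $\succ \lm{q}$. I would then re-run the induction above, this time proving the sharper claim that $p$ admits a decomposition $p = \iota + \omega$ with $\deg \iota \le \deg p$ and $\deg \omega \le \deg p$: in the first case $\deg(ct) = \deg t = \deg p$ and $\deg(p - ct) \le \deg p$, while in the second case $\deg g = \deg(\lm{g}) = \deg t = \deg p$, so $\deg(cg) \le \deg p$ and $\deg(p - cg) \le \deg p$; either way, combining the inductive hypothesis for the (smaller) polynomial with a freshly added term of degree at most $\deg p$ keeps both summands of degree at most $\deg p$. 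Since the decomposition $\F\axs = I \oplus W$ is unique by the first step, these are \emph{the} $\iota$ and $\omega$, so $\max\{\deg \iota, \deg \omega\} \le \deg p$; as $\deg p \le \max\{\deg \iota, \deg \omega\}$ is immediate from $p = \iota + \omega$, equality follows.

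I do not anticipate a real obstacle here: the only points needing care are the legitimacy of the well-ordered induction and the identity $\deg(\lm{q}) = \deg q$ for graded orders, with the remainder being routine bookkeeping; indeed one may simply cite \cite{tm} for the decomposition and present only the degree argument.
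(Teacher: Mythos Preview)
Your argument is correct. For the decomposition $\F\axs = I \oplus W$ you do exactly what the paper does, namely invoke (or reprove) the standard reduction result \cite[Theorem~1.1]{tm}.

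For the degree statement your route differs from the paper's. You re-run the reduction induction, keeping track of degrees at each step to obtain $\deg\iota \le \deg p$ and $\deg\omega \le \deg p$, and then combine with the trivial inequality. The paper instead gives a one-line contradiction: if $\deg p < \max\{\deg\iota,\deg\omega\}$ then the top-degree homogeneous parts of $\iota$ and $\omega$ must cancel, and since $\prec$ is graded this forces $\lm{\omega}=\lm{\iota}$; but $\lm{\iota}$ is the leading monomial of an element of $I$ while no monomial appearing in $\omega$ has this property. Your approach has the virtue of being entirely constructive and of making explicit exactly where gradedness is used (via $\deg(\lm{q})=\deg q$), whereas the paper's argument is shorter and avoids rerunning the induction.
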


\begin{proof}
The first part is true for every monomial order; see assertion (1) in
\cite[Theorem 1.1]{tm}.

Next, suppose $p = \iota + \omega$, where $\iota \in I$ and $\omega \in W$.
The only way that $\deg(p) \neq
\max\{\deg(\iota), \deg(\omega)\}$ is if the highest degree terms of $\iota$
and $\omega$ cancel each other out.
In this case $\lm{\omega}=\lm{-\iota}$ since $\prec$ is a graded monomial order.
On the other hand, $\lm{\omega}$ is not the leading monomial of an element from
$I$; a contradiction.
\end{proof}

Proposition \ref{prop:analLinFun} and Corollary \ref{cor:genByAnalReal2} are the main results of this subsection.

\begin{prop}
\label{prop:analLinFun}
 Let $I \subset \F\axs$ be a $\ast$-ideal generated by
analytic polynomials. 
There exists a positive hermitian linear functional $L$ such that
\begin{equation*}
I = \{ a \in \F\axs \colon L(a^*a) =0 \}.
\end{equation*}
Hence, the $\ast$-algebra $\F\axs/I$ is $O^*$-representable.
\end{prop}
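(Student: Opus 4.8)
The plan is to reduce, via Lemma \ref{hilba}, to producing a faithful state on $A:=\F\axs/I$, and then to construct that state by hand from the Gr\"obner data of Proposition \ref{prop:GBofAnStarIdeal}. First note that any $L$ as in the conclusion must vanish on $I$ (if $a\in I$ then $a^\ast a\in I$), so it factors through a positive hermitian functional $\bar L$ on $A$, and the condition $I=\{a:L(a^\ast a)=0\}$ says precisely that $\bar L$ is \emph{faithful}: $\bar L(\bar a^\ast\bar a)>0$ for every nonzero $\bar a\in A$. Conversely, in the only nontrivial case $I\subsetneq\F\axs$ a faithful state on $A$ lifts to a nonzero $L$ of the required shape ($1\notin I$ makes it nonzero), and Lemma \ref{hilba} then delivers that $\F\axs/I$ is $O^\ast$-representable. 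So the entire content is the existence of a faithful state on $A$.

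Fix a graded monomial order, chosen compatible with the involution; by Lemma \ref{lem:IplusW}, $\F\axs=I\oplus W$ with $W$ the span of the standard monomials, so $A$ is modelled on $W$ with multiplication ``reduce the product modulo $I$''. By Proposition \ref{prop:GBofAnStarIdeal} the reduced Gr\"obner basis of $I$ is $G\cup H^\ast$ with $G,H$ analytic, and its leading monomials are the words containing some $\lm{g}$ ($g\in G$) or some $\lm{h^\ast}$ ($h\in H$) as a subword; hence a word is standard exactly when, cut into its maximal analytic and antianalytic runs, each analytic run avoids every $\lm g$ and each antianalytic run avoids every $\lm{h^\ast}$. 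Two consequences I would extract and use repeatedly: no relation of $A$ couples the analytic variables to the antianalytic ones (all reductions occur inside the runs); and for a standard monomial $u$ the word $u^\ast u$ is again standard, since reversing-with-$\ast$ preserves standardness of runs and the middle junction of $u^\ast u$ separates runs of opposite type.

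Now define $L$ on $W$ to be supported on the standard monomials of the special form $w^\ast w$: set $L(w^\ast w):=c_w$ for positive reals $c_w$ to be chosen, and $L(z):=0$ on any standard monomial not of that form (each $w^\ast w$ determines $w$). Extend $L$ to $\F\axs$ through $\F\axs=I\oplus W$; then $L$ kills $I$, and for $a=\iota+\omega$ with $\iota\in I$, $\omega=\sum_u\omega_u u\in W$, one has $L(a^\ast a)=\sum_{u,v}\overline{\omega_u}\,\omega_v\,M_{uv}$ with $M_{uv}:=L(\widehat{u^\ast v})$. The Gram matrix $M$, read with the basis ordered by increasing length, is designed to be positive definite: $M_{uu}=L(u^\ast u)=c_u$, while for $u\neq v$ the entry $M_{uv}$ is a linear combination of the $c_w$ with $|w|<\max(|u|,|v|)$ --- this uses the degree bound $\deg\widehat{u^\ast v}\le|u|+|v|$ together with the fact, to be checked combinatorially, that a $w^\ast w$-shaped standard monomial of the top degree $|u|+|v|$ can appear in $\widehat{u^\ast v}$ only when no reduction happens, i.e.\ when $u^\ast v=w^\ast w$, which forces $u=v$. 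Choosing the $c_w$ recursively on $|w|$ and taking each $c_w$ with $|w|=N$ large enough to dominate, via a Schur-complement estimate, the contribution of the previously fixed shorter $c$'s, one gets that every finite truncation $M^{(N)}$, hence $M$ itself, is positive definite. Therefore $L(a^\ast a)>0$ for all $a\notin I$, so $I=\{a:L(a^\ast a)=0\}$, and Lemma \ref{hilba} finishes the proof.

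The step I expect to be the main obstacle is exactly the combinatorial bookkeeping underlying the preceding paragraph: verifying $u^\ast u$ is standard, controlling precisely which short $c_w$ can occur in $M_{uv}$ and with what magnitude bound, and checking that the chosen monomial order interacts harmlessly with $\ast$ --- all of which rest on the alternating analytic/antianalytic block structure of normal forms captured by Claims 1--2 in the proof of Proposition \ref{prop:GBofAnStarIdeal}. (An alternative, more conceptual route uses the no-coupling property to identify $A$ with the free product of $\F\ax/(I\cap\F\ax)$ and its adjoint copy and then builds the faithful state by a Voiculescu-type free-product-of-representations construction; the hands-on argument above has the advantage of staying inside the Gr\"obner framework already set up.)
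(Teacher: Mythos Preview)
Your approach mirrors the paper's: define $L$ on the complement $W$ to be supported on ``square'' monomials $w^*w$ with positive weights, extend by zero on $I$, and make the Gram matrix positive definite by choosing the weights inductively large degree by degree. The paper uses a single weight $c_d$ per degree rather than one per monomial, and it forces hermiticity by an explicit symmetrization $L(a)=\tfrac12\tilde L(a)+\tfrac12\overline{\tilde L(a^*)}$; your $L$ is not obviously hermitian unless $W=W^*$, which you have not arranged. The off-diagonal control you sketch is exactly what the paper proves as its Claim (that for $m_1\ne m_2$ of degree $\le d$ the value $L(m_1^*m_2)$ depends only on $c_0,\dots,c_{d-1}$), after which Lemma~\ref{lem:overlap} runs the induction.

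The genuine gap is your assertion that $u^*u$ is standard whenever $u$ is, justified by choosing the monomial order ``compatible with the involution.'' No graded monomial order on $\axs$ can be $*$-compatible: if $a\prec b\Leftrightarrow a^*\prec b^*$ held, the involution would be an order-preserving involution on each homogeneous piece, hence the identity there, which it is not. Concretely, take graded lex with $x_1<x_2<x_1^*<x_2^*$ and the $*$-ideal generated by $p=x_1x_2x_1+x_2x_1^2$; then $\lm(p)=x_2x_1^2$ while $\lm(p^*)=x_1^*x_2^*x_1^*$, the antianalytic word $u=(x_1^*)^2x_2^*$ is standard, but $u^*u=x_2x_1^2\cdot(x_1^*)^2x_2^*$ contains $\lm(p)$ and is not standard. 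In that situation $M_{uu}$ never sees $c_u$ and your diagonal-dominance/Schur-complement induction collapses. So ``reversing-with-$*$ preserves standardness of runs'' is false in general: an antianalytic run of a standard word avoids the leading monomials coming from $H^*$, but its adjoint need not avoid those coming from $G$, because $G$ and $H$ are computed with respect to different induced orders on $\F\ax$. The paper does not try to choose a special order; instead it symmetrizes and analyzes directly how the reduction of $m_1^*m_2$ interacts with degree-$2d$ squares (the point being that the middle two letters of a top-degree square must be of the form $y^*y$, which a purely analytic or antianalytic replacement block cannot produce). That analysis, not a compatibility hypothesis on $\prec$, is what you need to carry out. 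Your free-product alternative at the end is a genuinely different route and may well be the cleaner way to finish.
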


\begin{cor}
\label{cor:genByAnalReal2}
 If  $I \subset \F\axs$ is a $\ast$-ideal generated by
analytic polynomials, then 
$$\rr{I} =\rchard{\cR}{I}= I.$$
\end{cor}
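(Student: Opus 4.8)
The plan is to deduce this corollary directly from Proposition \ref{prop:analLinFun} together with the chain of inclusions recorded in Proposition \ref{prop:basicin}. First I would invoke Proposition \ref{prop:analLinFun} to obtain a positive hermitian linear functional $L$ on $\F\axs$ with $I = \{a \in \F\axs \colon L(a^*a) = 0\}$; by Lemma \ref{hilba} this already shows $\F\axs/I$ is $O^\ast$-representable. By Lemma \ref{ostar} (equivalence of items (1) and (4)), being $O^\ast$-representable for the quotient is equivalent to $I = \rchard{\cR}{I}$. So we immediately get $\rchard{\cR}{I} = I$.

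Next I would pin down $\rr{I}$. Proposition \ref{prop:basicin} gives the chain $I \subseteq \rr{I} \subseteq \rchard{\cR}{I}$. Since we have just shown the two outer terms coincide, the squeeze forces $\rr{I} = I$ as well. Combining, $\rr{I} = \rchard{\cR}{I} = I$, which is exactly the claimed string of equalities. In particular $I$ is real, and Corollary \ref{cor:genByAnalReal1} follows as the stated special case.

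I do not expect any real obstacle at this level: the corollary is a bookkeeping consequence once Proposition \ref{prop:analLinFun} is in hand. The genuine work — constructing the functional $L$ whose kernel ideal is precisely $I$ — is located in Proposition \ref{prop:analLinFun} itself, and that in turn rests on the Gröbner-basis structure theorem Proposition \ref{prop:GBofAnStarIdeal} (the reduced basis splits as $G \cup H^*$ with $G, H$ analytic) and the graded-order decomposition $\F\axs = I \oplus W$ of Lemma \ref{lem:IplusW}. Concretely, one would define $L$ on the complement $W$ via a natural inner product coming from the analytic shift-type representation (taking $L$ to send a word $w \in W$ to $1$ if $w = \emptyset$ and to $0$ otherwise, after projecting $\F\axs$ onto $W$ along $I$), then verify positivity of $a \mapsto L(a^*a)$ and that its radical is exactly $I$; the subtlety there is checking that no element outside $I$ is annihilated, which is where the precise form of the Gröbner basis is used. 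But for the present corollary, all of that is packaged inside Proposition \ref{prop:analLinFun}, so the proof here is short.
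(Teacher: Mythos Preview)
Your proof is correct and follows essentially the same route as the paper: invoke Proposition~\ref{prop:analLinFun} (together with Lemma~\ref{ostar}) to get $\rchard{\cR}{I}=I$, then squeeze with Proposition~\ref{prop:basicin}. One small side note: your parenthetical sketch of how $L$ might be built (send the empty word to $1$ and all other monomials in $W$ to $0$) is not the construction the paper actually uses---there $L$ is defined on square monomials of degree $2d$ via inductively chosen constants $c_d$ and Lemma~\ref{lem:overlap}---but as you say, that is all inside Proposition~\ref{prop:analLinFun} and irrelevant to the corollary itself.
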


\begin{proof}[Proof of Corollary \ref{cor:genByAnalReal2}]
Lemma \ref{ostar} and 
Proposition \ref{prop:analLinFun} imply 
 $\rchard{\cR}{I} = I$. 
 By Proposition \ref{prop:basicin} we have  $\rr{I} \subset \rchard{\cR}{I}$
 and of course $I \subset \rr{I} $, so we get the conclusion.
\end{proof}

\begin{proof}[Proof of Proposition \ref{prop:analLinFun}]
Let $\prec$ be a graded 
monomial order on $\axs$ so that $a \prec b$ if $\deg a < \deg b$.
Let $W$ be the space spanned by all monomials which are not the leading
monomial of an element of $I$ so that, by Lemma \ref{lem:IplusW}, $\F\axs = I
\oplus W$.
We will construct a positive hermitian linear functional $L$ on $\F\axs$ such
that
\begin{equation}
\label{eq:Lprop}
I = \{ a \in \F\axs \colon L(a^*a) = 0\}
\end{equation}
as follows.
Set $\tilde{L}(I) = \{0\}$.  For each monomial $m \in W$, set 
\[\tilde{L}(m) = \left\{ \begin{array}{cl} 0 & \text{ if } m \text{ is not a
square} \\
c_d & \text{ if } m \text{ is a square of degree } 2d \end{array} \right.\]
where each $c_d > 0$ is a constant to be chosen inductively.
Finally, we define $L(a)$ to be
\[L(a) := \frac{1}{2} \tilde{L}(a) + \frac{1}{2} \tilde{L}(a^*)^*\]
so that $L$ is hermitian.  Note that if $\iota \in I$, then $\iota^* \in I$, so
$L(\iota) = 0$.

We will need the following:

\vspace{.5 em}

\textit{Claim. \ Let $m_1, m_2 \in W$ be monomials of degree $\le
d$. If $m_1 \ne m_2,$
then $L(m_1^*m_2)$ depends only on $c_1,\ldots,c_{d-1}$.}

\medskip

It suffices to show that $\tilde{L}(m_1^*m_2)$ depends only on
$c_1,\ldots,c_{d-1}$.

If $m_1^* m_2 \in W$, either $\tilde{L}(m_1^*m_2)=0$ or
$\tilde{L}(m_1^*m_2)=c_e$ for some $e<d$
because $m_1 \ne m_2$ implies that $m_1^*m_2$ is not a square of degree $2d$.

If $m_1^*m_2 \not\in W$, we can
decompose $m_1^*m_2$ as $\iota + \omega$, where $\iota \in I$,
$\lm{\iota} = m_1^*m_2$, and $\omega \in W$, and
$\deg(\iota), \deg(\omega) \leq \deg(m_1^*m_2)$.

If $\deg(m_1^*m_2) < 2d$, then $\omega$ is spanned by monomials $u$ which either
are
not squares, in which case $\tilde{L}(u) = 0$, or which are squares, in which
case
$\tilde{L}(u)$ depends only on $c_0, \ldots, c_{d-1}$ because $\deg u < 2d$.

If $\deg(m_1^*m_2) = 2d$, then $\deg m_1 = \deg m_2 = d$.  Let $m_1$ and $m_2$
be of the
form
\[m_1 = u_1 \cdots u_k \quad \mbox{and} \quad m_2 = v_1 \cdots v_{\ell},\]
where the $u_i$ and $v_j$ alternate between being nonempty analytic and
antianalytic
words. Since $m_1,m_2 \in W$, each $u_i$ and each $v_j$ is not the leading
monomial of an element
of $I$. On the other hand, since $m_1^*m_2 \not\in W$,
\[m_1^*m_2 = u_k^* \cdots u_1^*v_1 \cdots v_{\ell}\]
is the leading monomial of some $p \in I$.
Let $G \cup H^*$ be the reduced Gr\"obner basis
of $I$ given by Proposition \ref{prop:GBofAnStarIdeal}.
By property (1) of reduced Gr\"obner bases, $u_k^* \cdots u_1^*v_1 \cdots
v_{\ell}=\lm{p}$
is divisible by the leading monomial of some $q \in G \cup H^*$.
Note that $\lm{q}$ is either analytic or antianalytic but it divides neither of
the words $u_i$ and $v_j$.
The only way this can happen is that $\lm{q}$ divides $u_1^*v_1$ and so
$u_1^*v_1$ is either analytic or antianalytic.
Let us decompose $u_1^*v_1=\iota_1 + \omega_1$, where $\iota_1 \in I$,
$\omega_1 \in W$, and both $\iota_1$ and $\omega_1$ are (anti)analytic
if $u_1^*v_1$ is (anti)analytic.  Also, by Lemma \ref{lem:IplusW},
$\deg(\iota_1), \deg(\omega_1) \leq \deg(u_1^*v_1)$.  Therefore
\[\tilde{L}(m_1^*m_2) = \tilde{L}(u_k^* \cdots u_2^*\iota_1 v_2 \cdots v_{\ell})
+ \tilde{L}(u_k^* \cdots u_2^*\omega_1 v_2 \cdots v_{\ell}).\]
We have $\tilde{L}(u_k^* \cdots u_2^*\iota_1 v_2 \cdots v_{\ell}) = 0$ since
$\tilde{L}(I) =
\{0\}$.  Next, the degree $2d$ terms of $u_k^* \cdots u_2^*\omega_1 v_2
\cdots v_{\ell}$ cannot be squares since  the middle two letters of each degree
$2d$ word of come from pieces of terms of $\omega_1$, which is either analytic
or
antianalytic---the middle piece of a square word is always of the form $yy^*$,
where $y$ is a letter.  Therefore $\tilde{L}(u_k^* \cdots u_2^*\omega_1 v_2
\cdots v_{\ell})$
does not depend on $c_{d}$ since $u_k^* \cdots u_2^*\omega_1 v_2 \cdots
v_{\ell}$ 
has no squares of degree $2d$ in it.
This completes the proof of the claim.

\medskip

Let $M_d$ be a vector whose entries are all monomials of degree $d$ in $W$.
Consider $A = L(M_d^*M_d)$, which is defined by evaluating the entries of $M_d^*M_d$ with the functional $L$.
First, since $L$ is hermitian, clearly $A$ is as well.
Each monomial $m_1^*m_2$, with $\deg m_1 = \deg m_2 = d$, is distinct.  If $m_1
\neq
m_2$, then, by the Claim, $L(m_1^*m_2)$ does not depend on $c_d$.
Finally, if $m_1 =m_2$, then $L(m_1^*m_1) = c_d$.  Therefore the matrix $A$ is
of the form
\[A = c_d \operatorname{Id} + F_d \]
$\operatorname{Id}$ is an identity matrix of
appropriate size, and where $F_d$ is hermitian and depends only on
$c_0, \ldots, c_{d-1}$.  Further, by the Claim, the value of $L$ on
$\F\axs_{2d-1}$ depends only on $c_0, \ldots, c_{d-1}$.  Therefore
Lemma \ref{lem:overlap}, below,  gives the result.
\end{proof}

The following technical lemma was used at the end of the proof of
Proposition \ref{prop:analLinFun}.
It will be also used in the proof of Proposition \ref{prop:homRealOp}.
If $A = (a_{ij})_{1 \leq i,j \leq n}$ is a matrix of polynomials, and $L$ is a linear functional on $\F\axs$, 
let $L(A)$ denote the matrix $(L(a_{ij}))_{1 \leq i,j \leq n}$.

\begin{lem}
\label{lem:overlap}
Let $I \subset \F\axs$ be a left ideal.
Fix a graded order $\prec$, and let $W$ be the space spanned by all monomials
which are not the leading monomial of an element of $I$ so that $\F\axs = I
\oplus W$.
For each degree $d$, let $M_d$ be the row vector whose entries
are all monomials of degree $d$ in $W$.

Suppose
there exist positive definite matrices $A_0, \ldots, A_d, \ldots$ such that
for any positive constants $c_0, \ldots, c_d, \ldots$, a
well-defined linear functional $L$ on $\F\axs$ can be defined inductively with
the following properties:
\begin{enumerate}
 \item $L(I + I^*) = \{0\}$.
 \item If $\deg(p) < 2d$ for some
$d$, then the definition of $L(p)$ depends only on the choice of $c_0,
\ldots, c_{d-1}$.
 \item $L(M_d^*M_d) = c_dA_{d} + F_d$, where the hermitian matrix $F_d$ depends
only on the choice of $c_0, \ldots, c_{d-1}$.
\end{enumerate}
Then there exist values of $c_0, \ldots, c_d, \ldots$ such that the linear
functional so defined satisfies $L(a^*a) \geq 0$ for each $a \in \F\axs$ and
equals $0$ if and only if $a \in I$.
\end{lem}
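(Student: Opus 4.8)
The plan is to reduce the assertion to the positive definiteness of a nested family of Gram matrices, and then to choose the constants $c_0,c_1,\dots$ one at a time so that every matrix in that family is positive definite. First I would reduce to the subspace $W$: given $a\in\F\axs$, write $a=\iota+\omega$ with $\iota\in I$ and $\omega\in W$ using $\F\axs=I\oplus W$. Since $I$ is a left ideal, both $\iota^*\iota$ and $\omega^*\iota$ lie in $I$, and $\iota^*\omega=(\omega^*\iota)^*\in I^*$; hence property (1) gives $L(a^*a)=L(\omega^*\omega)$. Moreover $a\in I$ if and only if $\omega=0$, because $I\cap W=\{0\}$. So it is enough to arrange that $L(\omega^*\omega)\ge 0$ for every $\omega\in W$, with equality precisely when $\omega=0$.

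Next, since $W$ is spanned by monomials, every $\omega\in W$ of degree $N$ has a unique expansion $\omega=\sum_{d=0}^{N}M_d v_d$ by coefficient column vectors $v_d$, and $\omega=0$ iff all $v_d=0$. Applying $L$ entrywise,
\[
L(\omega^*\omega)=\sum_{d,e=0}^{N}v_d^*\,L(M_d^*M_e)\,v_e=v^*B^{(N)}v ,
\]
where $v=(v_0,\dots,v_N)$ and $B^{(N)}:=\bigl(L(M_d^*M_e)\bigr)_{0\le d,e\le N}$ is a hermitian block matrix (hermitian since $L$ is hermitian and $(M_d^*M_e)^*=M_e^*M_d$). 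Thus the lemma follows once the constants are chosen so that $B^{(N)}$ is positive definite for every $N$.

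The structural point is that $M_d^*M_e$ is homogeneous of degree $d+e$, and $d+e<2\max\{d,e\}$ whenever $d\ne e$. By property (2) this forces $L(M_d^*M_e)$ with $d\ne e$ to depend only on $c_0,\dots,c_{\max\{d,e\}-1}$, while by property (3) the diagonal block is $L(M_d^*M_d)=c_dA_d+F_d$ with $F_d$ depending only on $c_0,\dots,c_{d-1}$. Hence $B^{(k)}$ depends only on $c_0,\dots,c_k$, and the last constant $c_k$ enters $B^{(k)}$ only through its $(k,k)$-block, and there additively as $c_kA_k$; writing
\[
B^{(k)}=\begin{bmatrix} B^{(k-1)} & C_k \\ C_k^* & c_kA_k+F_k \end{bmatrix},
\]
all of $B^{(k-1)}$, $C_k$ and $F_k$ depend only on $c_0,\dots,c_{k-1}$.

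Finally I would choose $c_0,c_1,\dots$ inductively so that $B^{(k)}$ is positive definite for each $k$. For $k=0$, $B^{(0)}=c_0A_0+F_0$ with $F_0$ fixed hermitian and $A_0$ positive definite, so $B^{(0)}$ is positive definite once $c_0$ is large. For the inductive step, assume $c_0,\dots,c_{k-1}$ have been fixed with $B^{(k-1)}$ positive definite; then $B^{(k-1)}$, $C_k$ and $F_k$ are all determined, and by the Schur complement criterion $B^{(k)}$ is positive definite iff $c_kA_k+F_k-C_k^*\bigl(B^{(k-1)}\bigr)^{-1}C_k$ is positive definite, which holds once $c_k$ is large since $A_k$ is positive definite and the remaining terms form a fixed hermitian matrix; fix such a $c_k$. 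With these choices $B^{(N)}$ is positive definite for all $N$, so $L(a^*a)=v^*B^{(N)}v\ge 0$ with equality exactly when $v=0$, i.e.\ when $a\in I$. I expect the triangular dependence in the third paragraph to be the crux: everything rests on combining hypothesis (2) with the inequality $d+e<2\max\{d,e\}$ for $d\ne e$, which is what decouples $c_k$ from the previously chosen constants and lets the inductive Schur complement argument close.
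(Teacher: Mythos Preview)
Your proof is correct and follows essentially the same approach as the paper's: both reduce $L(a^*a)$ to $L(\omega^*\omega)$ for $\omega\in W$ via property~(1), express this as a quadratic form in the coefficient vector of $\omega$ with respect to the monomial basis, and then choose the $c_d$ inductively so that the resulting block Gram matrix is positive definite at every stage, using the triangular dependence on the $c_d$ coming from hypothesis~(2) and the inequality $d+e<2\max\{d,e\}$ for $d\ne e$. The only cosmetic difference is that you invoke the Schur complement explicitly, whereas the paper simply asserts that $c_d$ can be taken large enough to make the $2\times 2$ block matrix positive definite; both arguments (and yours as well as the paper's) tacitly use that $L$ is hermitian, which is not stated in the lemma but holds in both applications.
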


\begin{proof}
If $1 \in I$, then the problem is trivial.  Otherwise, for $d = 0$, we
must have $L(1) = c_0 A_{0} + F_0$, where $A_{0}$ is a positive scalar.
Therefore, for a sufficiently large value of $c_0$ we get $L(1) > 0$.

Next assume inductively
assume inductively that $c_0, \ldots, c_{d-1}$ are defined so that
$L(b^*b) > 0$ for each $b \in \F\axs_{d-1} \setminus I_{d-1}$.
Let $a \in \F\axs_d$ so that $a$ can be decomposed as
$a = \iota + \omega$, where $\iota \in I$ and $\omega \in W$.
By Lemma \ref{lem:IplusW}, $\deg(\iota), \deg(\omega) \leq d$.
Since $L(I + I^*) = \{0\}$,
\[L(a^*a) = L(\iota^* \iota)+ L(\iota^*a) + L(\omega^*\iota) + L(\omega^*\omega)
=L(\omega^*\omega).\]
Further, suppose $a \not\in I$, which implies that $\omega \neq 0$.

Let $N_{d-1}$ be the row vector whose entries are all words in $W$ of length
less than $d$. Then $\omega = M_d
\alpha_d + N_{d-1}\alpha_{d-1}$ for some constant column vectors $\alpha_d,
\alpha_{d-1}$.  We see that
\begin{equation}
\label{eq:matrixFormLGamma}
L(\omega^*\omega)
= \begin{pmatrix}
\alpha_d\\ \alpha_{d-1}
\end{pmatrix}^*
\begin{pmatrix}
A&B\\ B^*&C
\end{pmatrix}
\begin{pmatrix}
\alpha_d\\ \alpha_{d-1}
\end{pmatrix},
\end{equation}
where
\[A = L(M_d^* M_d), \quad B = L(M_d^* N_{d-1}) \quad \mbox{and} \quad C =
L(N_{d-1}^*N_{d-1}). \]
If $\alpha_{d} = 0$, then $\deg(\omega) < d$, so $L(\omega^*\omega) =
\alpha_{d-1}^* C \alpha_{d-1} > 0$ since $\omega \not\in I$.  Since $L$ is
hermitian, clearly $C$ is also hermitian.  Since $\alpha_{d-1}$ is arbitrary,
this
implies that $C$ is positive definite.
Next, $B$ depends on polynomials of degree less than $2d$, so by assumption $B$
depends only
on $c_0, \ldots, c_{d-1}$, which are already determined.  Next consider $A =
c_d A_{d} + F_d$.
 We see that $c_0, \ldots, c_{d-1}$ are already determined, and since
$A_{d} \succ 0$, we can choose $c_d$ sufficiently large so
that the matrix
\[
\begin{pmatrix}
c_dA_d + F_d &B\\ B^*&C
\end{pmatrix}
\]
is positive definite.  Given (\ref{eq:matrixFormLGamma}), this
implies that $L(\omega^*\omega) > 0$. The result therefore follows by induction.
\end{proof}

\subsection{Homogeneous Analytic Ideals}
\label{sec:analyticMain}

 An (two-sided, left, right) ideal $I \subset \F\ax$ 
 is called \df{homogeneous} if it is
 generated by homogeneous polynomials, not necessarily of the
same degree.

\begin{prop}
\label{prop:homRealOp}
 If $I \subset \F\axs$ is a real, homogeneous left ideal (not necessarily
 finitely generated), then  there exists a
positive hermitian $\F$-linear functional $L$ on $\F\axs$
\[I = \{ \iota \colon L(\iota^*\iota) = 0\}.\]
\end{prop}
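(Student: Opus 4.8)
The plan is to mimic the proof of Proposition~\ref{prop:analLinFun}: produce, degree by degree, the positive ingredients of the functional and then glue them through Lemma~\ref{lem:overlap}. Fix a graded monomial order $\prec$ on $\axs$ and let $W$ be the span of the monomials that are not leading monomials of elements of $I$, so $\F\axs = I \oplus W$ by Lemma~\ref{lem:IplusW}. Since $I$ is homogeneous and $\prec$ is graded, this decomposition respects the grading, so $\F\axs_d^{\hom} = I_d^{\hom} \oplus U_d$ with $U_d := \operatorname{span}(M_d)$, where $M_d$ is the tuple of degree-$d$ monomials lying in $W$; in particular a homogeneous $b$ of degree $d$ lies in $I$ exactly when its $U_d$-component is zero.

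The heart of the argument is a per-degree construction. For each $d$ I would build a hermitian $\F$-linear functional $\Lambda_d$ on $\F\axs_{2d}^{\hom}$ that kills $(I+I^*)\cap\F\axs_{2d}^{\hom}$ and satisfies $\Lambda_d(b^*b) > 0$ for all $b \in \F\axs_d^{\hom}\setminus I$. Work in the finite-dimensional real space $H$ of hermitian elements of $\F\axs_{2d}^{\hom}$, its subspace $J := (I+I^*)\cap H$, the quotient $q\colon H \to H/J$, and the compact set $K := \{\, q(c^*c) \colon c \in U_d,\ \|c\| = 1 \,\}$ for a fixed norm on $U_d$. Then $\operatorname{conv}(K)$ is compact, convex, and does not contain $0$: a convex combination $\sum_k t_k\, c_k^*c_k$ can be rewritten as $\sum_k (\sqrt{t_k}\,c_k)^*(\sqrt{t_k}\,c_k)$, so if its class vanished then $\sum_k (\sqrt{t_k}\,c_k)^*(\sqrt{t_k}\,c_k) \in I+I^*$, and realness of the left ideal $I$ would force each $c_k \in I$, contradicting $0\ne c_k\in U_d$. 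Separating $0$ from $\operatorname{conv}(K)$ gives a real functional on $H/J$ strictly positive on $K$; composing with $q$ yields a real $\psi$ on $H$ vanishing on $J$ with $\psi(c^*c)>0$ for every nonzero $c\in U_d$. Extending $\psi$ to a hermitian $\F$-linear $\Lambda_d$ on $\F\axs_{2d}^{\hom}$ (vanishing on antihermitian elements when $\F=\RR$) works: it kills $(I+I^*)\cap\F\axs_{2d}^{\hom}$ because $\psi$ vanishes on the hermitian part $J$ of this $\ast$-invariant space, and for $b = b_U + b_I$ with $b_U\in U_d$, $b_I\in I_d^{\hom}$ one has $b^*b - b_U^*b_U \in (I+I^*)\cap H = J$, so $\Lambda_d(b^*b) = \psi(b_U^*b_U) > 0$ when $b\notin I$.

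Finally I would feed these into Lemma~\ref{lem:overlap}. For positive constants $c_0,c_1,\dots$ let $L = L[c_0,c_1,\dots]$ be the hermitian functional on $\F\axs$ with $L|_{\F\axs_{2d}^{\hom}} = c_d\Lambda_d$, $L|_{\F\axs_{2e+1}^{\hom}} = 0$, extended linearly across the grading, and set $A_d := \Lambda_d(M_d^*M_d)$. Then $A_d$ is hermitian and positive definite, since $\gamma^*A_d\gamma = \Lambda_d\big((M_d\gamma)^*(M_d\gamma)\big) > 0$ for $\gamma\ne 0$ (as $M_d\gamma$ is a nonzero element of $U_d$, hence not in $I$). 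Hypothesis (1) of Lemma~\ref{lem:overlap} holds because $I+I^*$ is homogeneous and each $c_d\Lambda_d$ annihilates $(I+I^*)\cap\F\axs_{2d}^{\hom}$; hypothesis (2) holds because the restriction of $L$ to degrees below $2d$ involves only $c_0,\dots,c_{d-1}$; and hypothesis (3) holds with $F_d = 0$, as $L(M_d^*M_d) = c_dA_d$. Lemma~\ref{lem:overlap} then supplies constants for which $L(a^*a)\ge 0$ for all $a$ with equality precisely for $a\in I$, giving the required positive hermitian functional (consistently with the fact that $I$, being real, satisfies the equivalent conditions of Lemma~\ref{fr}).

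The step I expect to be the main obstacle is the middle one: recognizing that the only new input beyond the machinery of Proposition~\ref{prop:analLinFun} is \emph{realness} of $I$, and that it is precisely what makes $0\notin\operatorname{conv}(K)$ — upgrading the soft statement ``$b\notin I \Rightarrow b^*b\notin I+I^*$'' to the positive-definiteness of $A_d$ that Lemma~\ref{lem:overlap} demands. The homogeneity hypothesis enters only to make $\F\axs = I\oplus W$ grading-preserving, so that the degree-$2d$ functionals $\Lambda_d$ can be chosen independently and assembled.
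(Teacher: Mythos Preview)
Your argument is correct and follows the same blueprint as the paper: build, for each degree $d$, a hermitian functional $\Lambda_d$ on $\F\axs_{2d}^{\hom}$ that annihilates $(I+I^*)_{2d}^{\hom}$ and is strictly positive on $b^*b$ for $b\in\F\axs_d^{\hom}\setminus I$, then assemble these via Lemma~\ref{lem:overlap} with $F_d=0$. The only difference is that the paper imports the per-degree functional $L_{2d}$ from the proof of \cite[Theorem~4.1]{chmn}, whereas you supply it directly through the finite-dimensional separation argument (realness of $I$ ensuring $0\notin\operatorname{conv}(K)$); this makes your write-up self-contained but is not a genuinely different route.
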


\begin{proof}

By the proof of \cite[Theorem 4.1]{chmn}, for each degree $d$ there exists a
positive hermitian $\F$-linear
functional $L_{2d}$ on $\F\axs_{2d}^{\hom}$ such that
\[I_d^{\hom} = \{ \iota \in \F\axs_{d}^{\hom} \colon L_{2d}(\iota^*\iota) =
0\},\]
and such that $L_{2d}(\iota) = 0$ for each $\iota \in (I + I^*)_{2d}^{\hom}$.
Define the linear functional $L$ on $\F\axs$ to be $0$ on odd degree monomials
and to be $c_d L_{2d}$ on
$\F\axs_{2d}^{\hom}$, where each $c_d$ is a positive constant to be chosen.
Note that since $I$ is homogeneous, by construction $L(I + I^*) = \{0\}$ since
each homogeneous  polynomial in $I$ is mapped to $0$.
Also, clearly $L$ is hermitian.

Consider $A = L(M_d^*M_d)$.
First, since $L$ is hermitian, clearly $A$ is as well.
Next, if $M_d \alpha  \in \F\axs_d^{\hom}$ for some constant column vector
$\alpha \neq 0$, then by linearity
\[
 L(\alpha^*M_d^*M_d \alpha) =  c_d \alpha^*L_d(M_d^*M_d) \alpha
\]
which is positive by assumption.
Therefore $L_d(M_d^*M_d) \succ 0$ and $A = c_d L_d(M_d^*M_d)$. Further, the
definition of $L$ on $\F\axs_{2d-1}$ depends only on $c_0, \ldots, c_{d-1}$.
 An application of 
Lemma \ref{lem:overlap} gives the
result.
\end{proof}

\begin{prop}
 \label{prop:homAnalHard}
  If  $I \subset \RR\axs$ is a $\ast$-ideal generated by
  homogeneous analytic polynomials, then 
  for each degree $d$, there exists a tuple of matrices $X$ 
  such that $\iota(X) =0$ for each $\iota \in I$ and $q(X) \neq 0$ for each $q \not \in I$
   with degree at most $d$. 
\end{prop}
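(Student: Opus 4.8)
The plan is to realize $X$ as a finite-dimensional compression of the GNS $\ast$-representation attached to $I$, with the grading carrying the argument. Since $I$ is generated by analytic polynomials it is real by Corollary~\ref{cor:genByAnalReal2}, and since its generators are homogeneous it is a homogeneous (two-sided, hence left) ideal; so Proposition~\ref{prop:homRealOp}---or equally Proposition~\ref{prop:analLinFun}---yields a positive hermitian $\RR$-linear functional $L$ on $\RR\axs$ with $I=\{a\in\RR\axs\colon L(a^*a)=0\}$. Following the proof of Lemma~\ref{hilba}, I put $V:=\RR\axs/I$, equip it with the (well-defined, positive definite) inner product $\langle[a],[b]\rangle:=L(b^*a)$, and let $\lambda\colon\RR\axs\to L(V)$, $\lambda(a)[b]=[ab]$, be the left regular representation; by Lemma~\ref{lem:zi0} it is a $\ast$-representation with $\operatorname{Ker}\lambda=I$, the last equality because $I$ is two-sided. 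Since $\lambda$ is a unital $\ast$-homomorphism, $a(\lambda(x_1),\dots,\lambda(x_g))=\lambda(a)$ for every $a\in\RR\axs$; thus, but for the infinite-dimensionality of $V$, the tuple $(\lambda(x_1),\dots,\lambda(x_g))$ would already be the desired $X$.

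The key step is to compress $\lambda$ to a finite-dimensional piece. Because $I$ is homogeneous, $V=\bigoplus_k V_k$ is graded, $V_k$ the image of $\RR\axs_k^{\hom}$; and since left multiplication in $V$ by any single letter $x_j$ or $x_j^*$ raises degree by exactly one, each $\lambda(x_j)$ and $\lambda(x_j^*)$ maps $V_k$ into $V_{k+1}$, hence $\lambda(p)$ maps $V_k$ into $\bigoplus_{j\le\deg p}V_{k+j}$ for every polynomial $p$. Consequently $V^{>d}:=\bigoplus_{k>d}V_k$ is invariant under all $\lambda(a)$; because $\lambda(a)^*=\lambda(a^*)$ and $a\mapsto a^*$ is a bijection of $\RR\axs$, $V^{>d}$ is also invariant under all $\lambda(a)^*$, i.e.\ it reduces $\lambda$. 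Its graded complement $V^{\le d}:=\bigoplus_{k\le d}V_k$ is finite-dimensional, so $V^{>d}$ has finite codimension, and a routine pre-Hilbert-space argument gives an orthogonal decomposition $V=V^{>d}\oplus H$ with $H:=(V^{>d})^{\perp}$ finite-dimensional. Restricting $\lambda$ to the reducing subspace $H$ yields a finite-dimensional $\ast$-representation $\widehat\lambda:=\lambda(\,\cdot\,)|_H$; in an orthonormal basis of $H$ the tuple $X:=(\widehat\lambda(x_1),\dots,\widehat\lambda(x_g))$ consists of real matrices of size $\dim H=\sum_{k\le d}\dim V_k$, and $p(X)=\widehat\lambda(p)$ for every $p\in\RR\axs$.

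It remains to check the two conclusions for this $X$. For $\iota\in I$, $\iota(X)=\widehat\lambda(\iota)=\lambda(\iota)|_H=0$ since $\operatorname{Ker}\lambda=I$. For $q\notin I$ with $\deg q\le d$, decompose $[1]=h_0+v_0$ with $h_0\in H$ and $v_0\in V^{>d}$; then $[q]=\lambda(q)[1]=\lambda(q)h_0+\lambda(q)v_0$, where $\lambda(q)v_0\in V^{>d}$ by invariance while $[q]\in V^{\le d}$ because $\deg q\le d$. Comparing components in the grading splitting $V=V^{\le d}\oplus V^{>d}$ shows the $V^{\le d}$-component of $\lambda(q)h_0$ equals $[q]\ne0$, so $q(X)h_0=\lambda(q)h_0\ne0$ and hence $q(X)\ne0$. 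This establishes the proposition.

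The one genuinely delicate point is the choice of compression: here, in contrast to the Toeplitz example of Section~\ref{sec:mainex}, \emph{every} letter operator $\lambda(x_j),\lambda(x_j^*)$ strictly raises the grading, so $V^{>d}$ is simultaneously finite-codimensional and reducing---exactly what makes the restriction to $H$ remain faithful on polynomials of degree $\le d$. The remaining ingredients (the GNS construction of Lemma~\ref{hilba}, and the orthogonal splitting off of a finite-codimensional subspace of a pre-Hilbert space) are routine.
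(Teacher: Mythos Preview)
Your argument has a genuine gap at the step you describe as ``routine'': the orthogonal decomposition $V=V^{>d}\oplus (V^{>d})^\perp$ need not hold in a pre-Hilbert space, and in fact in the present situation typically fails. Your own observation that \emph{every} letter operator $\lambda(x_j),\lambda(x_j^*)$ strictly raises the grading is exactly what kills the approach. Since $H=(V^{>d})^\perp$ is $\lambda$-invariant and finite-dimensional, for any nonzero $h=[p]\in H$ the vectors $h,\lambda(x_1)h,\lambda(x_1)^2 h,\dots$ all lie in $H$; but $\lambda(x_1)^N h=[x_1^N p]$ has strictly increasing top degree as long as it is nonzero, so these vectors are linearly independent unless $x_1^N p\in I$ for some $N$. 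For $I=0$ (a legitimate instance of the proposition) this never happens when $p\ne 0$, hence $H=\{0\}$, your compressed representation $\widehat\lambda$ is the zero representation, and it cannot distinguish $q\notin I$ from $I$. The same obstruction persists whenever $I$ has a ``free'' variable (e.g.\ $I$ generated by $x_1^2$, using $x_2$ in place of $x_1$ above). So the decomposition $[1]=h_0+v_0$ on which your final paragraph rests is generally unavailable.

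The paper avoids this by \emph{not} seeking a reducing subspace. It first enlarges $I$ to $I^{(d)}$ by adjoining all analytic monomials of degree $d+1$, does GNS on $\RR\axs/I^{(d)}$, and then compresses to the finite-dimensional subspace $\cW=\{[ab]:a\text{ analytic},\ \deg b\le d\}$, which is invariant under the \emph{analytic} letters only. That semi-invariance is exactly enough: for an analytic generator $\iota$ of $I$ one gets $\iota(X)[ab]=[\iota ab]=0$, hence $p(X)=0$ for all $p$ in the $\ast$-ideal $I$; and since $[1]\in\cW$ and each intermediate product stays in $\cW$, $q(X)[1]=[q]\ne 0$ for $q\notin I$ with $\deg q\le d$. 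The trade is that the paper's compression uses only one-sided invariance (for analytic words), which is compatible with ``letters raise degree,'' whereas your reducing-subspace strategy implicitly demands that adjoints \emph{lower} degree on $H$, contrary to the grading behaviour you correctly identified.
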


\begin{proof}
 Fix $d \in \NN$.
Let $I^{(d)}$ be the $\ast$-ideal generated by $I$ as well as by all analytic
monomials of degree $d+1$.
In this case, $(I^{(d)})_d = I_d$.
By Proposition \ref{prop:analLinFun} there exists a nonnegative hermitian linear
functional $L_d$ such that
\[I^{(d)} = \{ a \in \F\axs \colon L_d(a^*a) = 0\}.\]
We follow the GNS construction to define $\cH$ to be the pre-Hilbert space
defined as the 
vector space $\F\axs / I^{(d)}$ with inner product
\[\langle [a], [b] \rangle := L_d(b^*a),\]
and a tuple of linear operators $\widetilde{X}=(\tilde{X}_1,\ldots,\tilde{X}_g)$
on $\cH$ such
that 
$\tilde{X}_i [r] = [x_i r]$ for each $i=1,\ldots,g$ and each $r \in \F\axs$.
Clearly, $\tilde{X}_i^* [r] = [x_i^* r]$ for each $i=1,\ldots,g$ and for each $r
\in
\F\axs$, which implies that $q(\widetilde{X})[r] = [qr]$ for each $q,r
\in \F\axs$.

Define $\cW \subset \cH$ to be the space
\[\cW = \{ [ab] \colon a,b \in \F\axs,\ a \mbox{ analytic, } \deg(b) \leq
d\}.\]
Since every analytic monomial of degree greater than $d$ is in $I^{(d)}$, the
space $\cW$
is finite dimensional.  Let $X$ be the tuple of operators on $\cW$ defined by
\[X = (P_{\cW}\tilde{X}_1P_{\cW}, \ldots, P_{\cW} \tilde{X}_g P_{\cW}),\]
where $P_{\cW}$ is the self-adjoint projection map onto $\cW$.
If $a$ is analytic and $\deg(b) \leq d$, then
$P_{\cW} \tilde{X}_i P_{\cW} [ab] =P_{\cW} \tilde{X}_i  [ab]=P_{\cW}[x_i ab]
=[x_i ab]$ for each $i=1,\ldots,g$ and hence, 
\[\vartheta(X)[ab] = [\vartheta ab]\]
for each analytic $\vartheta$.
If $\iota \in I$ is one of the analytic generators of $I$, this implies that
\[\iota(X)[ab] = [\iota ab] = 0.\]
Therefore $p(X) = 0$ for each $p \in I$.
Also, if $\deg(q) \leq d$, then
\[q(X)[1] = [q].\]
It is clear that $q \in I^{(d)}$ if and only if $q \in I$.
Therefore if $q \not\in I$, then $q(X) \neq 0$.
\end{proof}

\begin{thm}
\label{thm:analHardNSS}
 If  $I \subset \RR\axs$ is a $\ast$-ideal generated by  homogeneous
  analytic polynomials, then  $\rhard{I} = I$.
\end{thm}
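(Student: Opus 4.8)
The plan is to obtain the theorem as a short corollary of Proposition \ref{prop:homAnalHard}. One inclusion is free: since $\rhard{I} = \ihard{\vhard{I}}$ and every $\iota \in I$ vanishes at every point of $\vhard{I}$, we have $I \subseteq \rhard{I}$ regardless of any hypothesis on $I$. So the entire content is the reverse inclusion $\rhard{I} \subseteq I$.

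To prove $\rhard{I} \subseteq I$, I would fix $q \in \rhard{I}$ and set $d := \deg(q)$. Feeding this $d$ into Proposition \ref{prop:homAnalHard} produces an integer $n$ and a tuple $X \in \matof{n}{\RR}{g}$ with $\iota(X) = 0$ for all $\iota \in I$; by the definition of the hard variety this says precisely $X \in \vhard{I}_n$. Because $q \in \rhard{I} = \ihard{\vhard{I}}$, it follows that $q(X) = 0$. On the other hand, the second assertion of Proposition \ref{prop:homAnalHard} guarantees $q(X) \neq 0$ for every $q \notin I$ of degree at most $d$. Since $\deg(q) = d$, these two facts are compatible only if $q \in I$. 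Hence $\rhard{I} \subseteq I$, and combined with the trivial inclusion we obtain $\rhard{I} = I$.

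I expect the final write-up to be essentially the paragraph above; there is no genuine obstacle left at this stage, because all the work has already been absorbed into Proposition \ref{prop:homAnalHard} (and, behind it, the Gr\"obner-basis description of Proposition \ref{prop:GBofAnStarIdeal}, the construction of the positive hermitian functional $L$ in Proposition \ref{prop:analLinFun}, and the inductive Lemma \ref{lem:overlap}). The one conceptual point worth flagging explicitly is that the hard radical is a ``degree-local'' operation: deciding whether $q \in \rhard{I}$ requires only a single point of $\vhard{I}$ that detects non-membership up to degree $\deg(q)$, and Proposition \ref{prop:homAnalHard} manufactures exactly such a point for each prescribed degree. It is also worth remarking that although the theorem is phrased over $\RR$, the argument uses nothing $\RR$-specific beyond what is already built into its prerequisites, so the analogous statement holds over any base field for which those prerequisites are available.
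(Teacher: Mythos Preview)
Your proposal is correct and is exactly the paper's approach: the paper's proof is the single sentence ``This follows directly from Proposition \ref{prop:homAnalHard},'' and you have simply unpacked that sentence by choosing $d=\deg(q)$ and reading off both conclusions of the proposition. There is nothing to add or change.
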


\begin{proof}
This follows directly from Proposition \ref{prop:homAnalHard}.
\end{proof}

\begin{proof}[Proof of Theorem \ref{thm:homAnalHilb}]
One can construct tuples of matrices $X^{(d)}$ on finite-dimensional Hilbert
spaces $\cH^{(d)}$ by Proposition
\ref{prop:homAnalHard} such that $\iota(X^{(d)}) = 0$
for each $\iota \in I$ and $p(X^{(d)}) \neq 0$ if $\deg(p) \leq d$ and $p
\not\in I$.  Since $I$ is homogeneous, one can scale each $X^{(d)}$ by a
scalar and still preserve $\iota(X^{(d)}) = 0$
for each $\iota \in I$ and $p(X^{(d)}) \neq 0$ if $\deg(p) \leq d$ and $p
\not\in I$.  Therefore choose each $X^{(d)}$ to have norm bounded by $1$.
Let $X := \bigoplus_{d \in \NN} X^{(d)}$ be an operator on $\cH := \bigoplus_{d
\in \NN} \cH^{(d)}$.  Then clearly $\|X\| \leq 1$ and $p(X) = 0$ if and only if
$p \in I$.
\end{proof}

We end this section with two remarks.

\begin{rem}
 Returning to the example at the outset of this section of
  the $\ast$-ideal $I_p$ of $\F\axs$ generated by $p=1+x_1x_2-x_2x_1$,
  note that it does not satisfy the condition 
\[
  \rhard{I(P)} \cap \F\ax = I(P)\cap \F\ax
\]
 which is, at least formally, weaker than the 
 hard Nullstellensatz property.
\qed \end{rem}

\begin{rem}
  The real radical of the two-sided ideal in $\F\axs$ generated by
  a collection of analytic polynomials $P$ is the 
  the $\ast$-ideal generated by $P$. 
\qed \end{rem} 

\section{Left Zeroes}
\label{sec:leftZeroes}

  There is a theory of Nullstellensatz for a left ideal $I$
  in a $*$-algebra $\chrisA$ (see \cite{chmn,chkmn} for the most 
  recent results and for historical references)
  and this section briefly explores the connections between Nullstellensatz
  for left ideals and the Nullstellensatz  in this article for two-sided ideals. 
  
  The main result
    of  this section is that, 
    for the important representation classes, 
    the  left  radical of a two-sided ideal 
    coincides with its hard radical. The machinery
    developed in this section leads to an alternate
    proof of Proposition \ref{thm:mainfindim} \eqref{it:big}.
    This machinery will also be used in Section \ref{sec:softZeroes}
    where the relations between hard and soft zeros (defined later)
   are established. 
   

 Given a representation  class  $\cC$ of the $\ast$-algebra $\chrisA$, let
\[
   \cC_\lft=\{(\pi,v) \colon \pi \in \cC, v \in V_\pi\}.
\]  
The elements of $\cC_\lft$ will be considered as ``left real points'' of $\chrisA$.
We say that an element $(\pi,v)$ of $\cC_\lft$ is a \df{left zero} of an element $a \in \chrisA$ if $\pi(a)v=0$. 
If $T\subset \cC_\lft$, then 
\[
 \icleft{\cC}{T}=\{a \in \chrisA \colon \pi(a)v=0 \text{ for all } (\pi,v) \in T\} 
\]
is a left ideal in $\chrisA$ --- the \df{left vanishing ideal} of $T$.
In the case that $T$ is a singleton   $\{(\pi,v)\},$ it is convenient to 
abbreviate  $\icleft{\cC}{\{(\pi,v)\}}$ to $\icleft{\cC}{\pi,v}$.
\i{$\icleft{\cC}{\{(\pi,v)\}}$ to $\icleft{\cC}{\pi,v}$}
Given  $\cS \subseteq \chrisA$, let
\[ 
\vcleft{\cC}{\cS}=\{(\pi,v) \in \cC_\lft \colon \pi(s)v=0 \text{ for all } s \in S\} 
\]
\i{$\vcleft{\cC}{\cS}$}
be its \df{left variety} and let
\[ 
\rcleft{\cC}{\cS} :=  \icleft{\cC}{\vcleft{\cC}{\cS}} 
\]
\i{$\rcleft{\cC}{\cS} $}
be its \df{left radical}. If $J(\cS)$ is the left ideal generated by $\cS$, then clearly
\[
\vcleft{\cC}{\cS}=\vcleft{\cC}{J(\cS)} \quad \mbox{and} \quad  \rcleft{\cC}{\cS}=\rcleft{\cC}{J(\cS)}.
\]\i{$\vcleft{\cC}{\cS}$}
When $T \subseteq \Pi$ and $\cS \subseteq \chrisA$ and $\cC=\Pi$ (recall that $\Pi$ is the class of all finite-dimensional
$\ast$-representations), we will use the abbreviations
\[
\ileft{T}=\icleft{\Pi}{T}, \quad \vleft{\cS}=\vcleft{\Pi}{\cS} \quad \mbox{and} \quad \rleft{\cS}=\rcleft{\Pi}{\cS}.
\]
\i{$\ileft{T}=\icleft{\Pi}{T}$}
\i{$\vleft{\cS}=\vcleft{\Pi}{\cS} $}
\i{$ \rleft{\cS}=\rcleft{\Pi}{\cS}$}

\subsection{Real Left Ideals of Finite Codimension}
\label{subsec:leftfin}

 By Corollary \ref{cor:ihardx}, 
  a two-sided ideal $I$ of $\chrisA$ has the form $\ihard{\pi}$
 for some  $\pi \in \Pi$ 
  (where $\ihard{\pi}:=\ihard{\{\pi\}}=\operatorname{ker} \pi$)
 if and only if $I$ is real and $\dim \chrisA/I < \infty$. 
 Lemma \ref{lem:ileftx} is the one-sided version of this fact.

\begin{lem}
\label{lem:ileftx}
 If $I$ is a  left ideal  of $\chrisA,$ then the  following are equivalent.
\begin{enumerate}[(1)] 
\item $I$ is real and $\dim \chrisA/I < \infty;$
\item There exist $\pi \in \Pi$ and $v \in V_{\pi}$ such that $I=\ileft{\pi,v}$ and $\pi(\chrisA)v=V_\pi$.
\end{enumerate}

 Moreover, if $(\pi,v)\in\cC_\lft$ is such that $\pi(\chrisA)v=V_\pi$ and if  
  $I\subset \icleft{\cC}{\pi,v}$ is  a two-sided ideal, then $I\subset \ichard{\cC}{\pi}.$
   In particular, if $\icleft{\cC}{\pi,v}$ is a two-sided ideal, then $\ichard{\cC}{\pi}=\icleft{\cC}{\pi,v}$.
\end{lem}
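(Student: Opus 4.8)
The plan is to prove the lemma in three parts, corresponding to the equivalence (1)$\iff$(2) and then the two "moreover" statements, with the latter two doing most of the real work since the first "moreover" is really the engine behind everything.

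For (2)$\implies$(1): given $\pi\in\Pi$ and $v\in V_\pi$ with $\pi(\chrisA)v=V_\pi$, the map $a\mapsto \pi(a)v$ is a surjective linear map $\chrisA\to V_\pi$ with kernel $\icleft{\Pi}{\pi,v}=I$, so $\chrisA/I\cong V_\pi$ is finite dimensional. For realness, suppose $\sum_j p_j^*p_j\in I+I^*$. Since $I$ is a left ideal containing, upon inspection, its own description via $\pi$, one uses the standard trick: for any $w\in\chrisA$, $\sum_j (p_jw)^*(p_jw)\in I+I^*$ as well (using that $q(a+b^*)r\in I+I^*$ when $a,b\in I$), hence it suffices to show $a\in I$ whenever $a^*a\in I+I^*$. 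Write $a^*a = \iota_1 + \iota_2^*$ with $\iota_1,\iota_2\in I$. Then $\langle \pi(a)v,\pi(a)v\rangle = \langle \pi(a^*a)v,v\rangle = \langle \pi(\iota_1)v,v\rangle + \langle v, \pi(\iota_2)v\rangle = 0$ since $\pi(\iota_i)v=0$; here the inner product is on the pre-Hilbert space $V_\pi$ and positivity forces $\pi(a)v=0$, i.e. $a\in I$. For (1)$\implies$(2): take $\pi$ to be the left regular representation of $\chrisA$ on $\chrisA/I$, which by Lemma \ref{ostar}/Lemma \ref{hilba}-type reasoning is a $\ast$-representation precisely because $I$ is real (realness gives the compatible inner product on $\chrisA/I$ as in the proof of Lemma \ref{hilba}), let $v=[1]$ be the class of the identity; then $\pi(a)v=[a]$, so $\pi(\chrisA)v=\chrisA/I=V_\pi$ and $\icleft{\Pi}{\pi,v}=\{a\colon [a]=0\}=I$, and $\pi$ is finite dimensional since $\dim\chrisA/I<\infty$.

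For the first "moreover": suppose $(\pi,v)\in\cC_\lft$ with $\pi(\chrisA)v=V_\pi$ and $I\subset \icleft{\cC}{\pi,v}$ a two-sided ideal. I want $I\subset\ichard{\cC}{\pi}=\operatorname{ker}\pi$. Let $\iota\in I$; to show $\pi(\iota)=0$ it suffices to show $\pi(\iota)w=0$ for every $w$ in a spanning set of $V_\pi$, and $\{\pi(a)v\colon a\in\chrisA\}$ spans (indeed equals) $V_\pi$. So compute $\pi(\iota)\pi(a)v=\pi(\iota a)v$; since $I$ is a two-sided ideal, $\iota a\in I\subset\icleft{\cC}{\pi,v}$, hence $\pi(\iota a)v=0$. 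Therefore $\pi(\iota)$ kills all of $V_\pi$, so $\iota\in\operatorname{ker}\pi$. For the final sentence: if $\icleft{\cC}{\pi,v}$ is itself two-sided, apply the previous paragraph with $I=\icleft{\cC}{\pi,v}$ to get $\icleft{\cC}{\pi,v}\subset\ichard{\cC}{\pi}$; the reverse inclusion $\ichard{\cC}{\pi}=\operatorname{ker}\pi\subset\icleft{\cC}{\pi,v}$ is immediate since $\pi(a)=0$ forces $\pi(a)v=0$. Hence equality. Note that the hypothesis $\pi(\chrisA)v = V_\pi$ is not needed for this reverse inclusion but is essential for the forward one.

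The main obstacle, I expect, is getting (1)$\implies$(2) cleanly — specifically verifying that the left regular representation on $\chrisA/I$ is genuinely a $\ast$-representation, which requires producing the inner product on $\chrisA/I$ from realness of $I$ and checking adjointability. This is exactly the content already packaged in Lemma \ref{hilba} (and its proof, via a positive hermitian functional, or more directly the equivalence of its three bulleted conditions), together with Lemma \ref{lem:zi0} identifying $\operatorname{ker}$ of the left regular representation with the largest two-sided ideal in $I$ — but since $I$ here is a left ideal that need not be two-sided, one must check $\icleft{\Pi}{\pi,v}$ recovers $I$ and not merely $Z(I)$, which works because we chose $v=[1]$ so that $\pi(a)v=[a]$ sees all of $I$, not just its two-sided core. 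Everything else is routine positivity-of-inner-product manipulation of the kind already used repeatedly in Lemmas \ref{lem:zi0}, \ref{fr}, and \ref{ostar}.
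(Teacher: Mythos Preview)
Your arguments for (2)$\Rightarrow$(1) and for both ``moreover'' statements are correct and essentially match the paper's. The gap is in (1)$\Rightarrow$(2). You correctly identify that the crux is equipping $\chrisA/I$ with an inner product for which the left regular representation becomes a $\ast$-representation, but you then assert this is ``exactly the content already packaged in Lemma~\ref{hilba}.'' It is not: Lemma~\ref{hilba} (and the three equivalent conditions in its proof) \emph{assumes} the existence of a positive hermitian functional $L$ with $I=\{a:L(a^*a)=0\}$ and deduces the $\ast$-representation. It does not produce $L$ from realness of $I$. That production is the nontrivial step, and realness of a left ideal by itself does not hand you such an $L$.

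The paper fills this gap with genuine structure theory. One passes to $Z(I)$, the largest two-sided ideal inside $I$, which is real by Lemma~\ref{lem:zi0}; then $\chrisA/Z(I)$ is formally real (Lemma~\ref{fr}) and finite dimensional (it embeds in $\operatorname{End}(\chrisA/I)$). Proposition~\ref{structure} then gives a $\ast$-isomorphism $\rho\colon \chrisA/Z(I)\to \bigoplus_i M_{n_i}(F_i)$ with standard involutions. In this semisimple image the left ideal $\rho(I)$ is generated by an idempotent $e$, and one takes $L(b)=\operatorname{trace}\big((1-e)^* b (1-e)\big)$, which is positive and has exactly $\rho(I)$ as its zero set. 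Without this (or an equivalent construction), your (1)$\Rightarrow$(2) does not go through.
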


\begin{proof}
Clearly, (2) implies (1).  To prove the converse, 
 suppose  that $I$ is real and $\dim \chrisA/I < \infty$. 
 Let $\pi$ be the left regular representation of $\chrisA$ on $\chrisA/I$ and $v=1+I$. 
 Let $V_\pi = \chrisA/I$ and note that  $\pi(\chrisA)v=V_\pi$.
 It remains to show that there exists an inner product on $\chrisA/I$ such that
 $\pi$ is a $\ast$-representation.

The kernel $Z(I)$ of the left regular representation $\pi$ is the
  largest two-sided ideal contained in $I$ by Lemma \ref{lem:zi0}. 
   Moreover, $\chrisA/Z(I)$ is isomorphic to a subspace
   of the linear maps on the finite dimensional space
   $\chrisA/I$.  Hence $\chrisA/Z(I)$ is finite dimensional. 
By Lemma \ref{lem:zi0} and  Lemma \ref{lem:zi}\eqref{it:zi1}, 
 $Z(I)$ is a real $\ast$-ideal.
  By Lemma \ref{fr} and  Proposition \ref{structure},
$\chrisA/Z(I)$ is $\ast$-isomorphic to a finite direct sum $\oplus_i M_{n_i}(F_i)$
where $n_i \in \NN$, $F_i \in \{\RR,\CC,\mathbb{H}\}$ and the involution is conjugate transpose.

Let $\rho \colon \chrisA \to \oplus_i M_{n_i}(F_i)$ be the composition of the canonical mapping
$\chrisA \to \chrisA/Z(I)$ and the isomorphism $\chrisA/Z(I) \to \oplus_i M_{n_i}(F_i)$.
Since $\chrisA/I$ is isomorphic to $\rho(\chrisA)/\rho(I)$ as a vector space, it suffices to construct a positive 
linear functional $L$ on $\rho(\chrisA)$ such that $\rho(I)=\{b \in \rho(\chrisA) \colon L(b^*b)=0\}$.
It is well-known that every left ideal in a semisimple algebra is generated by an idempotent; see e.g. \cite[Corollary 2.1A]{faith}.
Therefore, $\rho(I)=\rho(\chrisA)e$ for some idempotent $e \in \rho(\chrisA)$ and we can take
\[
L(b):=\operatorname{trace}\big((1-e)^* b (1-e)\big)
\]
where $1$ is the identity matrix and $b$ runs through $\rho(\chrisA)=\oplus_i M_{n_i}(F_i)$.

 To prove the moreover statement,
  observe since  $\pi(\icleft{\cC}{\pi,v})v=0$ and
 also $I$ is a two-sided ideal,  that $\pi(I)\pi(\chrisA)v=0$. 
  Since $\pi(\chrisA)v= V_\pi$, it follows that $\pi(I)=0$.  Hence
  $I\subset \ichard{\cC}{\pi}.$  In the case that $\icleft{\cC}{\pi,v}$ is a
   two-sided ideal, $\icleft{\cC}{\pi,v}\subset \ichard{\cC}{\pi}.$ The reverse inclusion is evident
   and hence $\ichard{\cC}{\pi} =\icleft{\cC}{\pi,v}$.  
\end{proof}

\begin{rem}
For $\chrisA=\FA$, Lemma \ref{lem:ileftx} 
can also be deduced from \cite[Theorem 4.1]{chmn} and \cite[Lemma 3]{lewin}.
\qed \end{rem}

\subsection{Left Radical of a Two-Sided Ideal Is Two-Sided}
\label{subsec:leftishard}

 Note, if $I$ is a two-sided ideal in $\chrisA$, then 
\[ \rcleft{\cC}{I} \subseteq \rchard{\cC}{I}. \]
 We would like to know when the opposite inclusion holds.

We say that a representation class $\cC$ is \df{regular} if for every  $(\pi,v)
\in \cC_\lft$ there exists
$(\tilde{\pi},\tilde{v}) \in \cC_\lft$ such that
$\tilde{\pi}(\chrisA)\tilde{v}=V_{\tilde{\pi}}$ and 
$\Vert \pi(a)v \Vert= \Vert \tilde{\pi}(a) \tilde{v} \Vert$ for every $a \in
\chrisA$.   
Recall that $\Pi$ is the class of all finite-dimensional $\ast$-representations,
$\cB$ is the class of all bounded $\ast$-representations and $\cR$ is the class
of all $\ast$-representations.

\begin{prop}
\label{prop:regular}
The representation classes $\Pi$, $\cB$ and $\cR$ are regular.
\end{prop}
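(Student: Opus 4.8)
The plan is to treat all three classes at once by restricting a given representation to the cyclic submodule generated by the distinguished vector. Fix $(\pi,v)\in\cC_\lft$ and set $\tilde V:=\pi(\chrisA)v\subseteq V_\pi$, regarded as a pre-Hilbert space with the inner product inherited from $V_\pi$. First I would record that $\tilde V$ is invariant under every $\pi(a)$ and, because $\chrisA=\chrisA^*$, under every $\pi(a)^*=\pi(a^*)$ as well: for $a,b\in\chrisA$ one has $\pi(a)\bigl(\pi(b)v\bigr)=\pi(ab)v\in\tilde V$ and likewise $\pi(a^*)\bigl(\pi(b)v\bigr)=\pi(a^*b)v\in\tilde V$. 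Consequently the restrictions $\tilde\pi(a):=\pi(a)|_{\tilde V}$ are adjointable operators on $\tilde V$ with $\tilde\pi(a)^*=\pi(a^*)|_{\tilde V}=\tilde\pi(a^*)$, so $\tilde\pi\colon\chrisA\to L(\tilde V)$ is a unital $\ast$-homomorphism, i.e.\ a $\ast$-representation. Taking $\tilde v:=v=\pi(1)v\in\tilde V$ we get $\tilde\pi(\chrisA)\tilde v=\tilde V=V_{\tilde\pi}$, and since $\tilde\pi(a)\tilde v=\pi(a)v$ with matching inner products, $\Vert\tilde\pi(a)\tilde v\Vert=\Vert\pi(a)v\Vert$ for every $a\in\chrisA$.

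It then remains to check that $(\tilde\pi,\tilde v)$ belongs to the same class. For $\cC=\cR$ there is nothing to do. For $\cC=\Pi$, if $\dim V_\pi<\infty$ then $\tilde V\subseteq V_\pi$ is finite dimensional, so $\tilde\pi\in\Pi$. For $\cC=\cB$, each $\tilde\pi(a)$ is the restriction of the bounded operator $\pi(a)$ to an invariant subspace, hence bounded with $\Vert\tilde\pi(a)\Vert\le\Vert\pi(a)\Vert$; thus $\tilde\pi$ is a bounded $\ast$-representation. This establishes regularity of all three classes.

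The only point that needs a moment's thought — and it is not really an obstacle — is the invariance of $\tilde V$ under the adjoints $\pi(a)^*$, not merely under the $\pi(a)$: this is exactly what makes the restriction a $\ast$-representation rather than just an algebra representation, and it follows immediately from $\chrisA$ being closed under the involution. Everything else is routine bookkeeping about restricting operators and inner products to a submodule.
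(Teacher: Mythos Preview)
Your proof is correct and follows essentially the same idea as the paper: both pass to the cyclic $\ast$-subrepresentation generated by $v$, you by restricting $\pi$ to the invariant subspace $\pi(\chrisA)v\subseteq V_\pi$, the paper by taking the left regular representation on $\chrisA/I$ with $I=\{a:\pi(a)v=0\}$ and the GNS-style inner product $\langle a+I,b+I\rangle:=\langle\pi(a)v,\pi(b)v\rangle$. These two constructions are canonically unitarily equivalent via $a+I\mapsto\pi(a)v$, and your version has the minor advantage that no well-definedness check for the inner product is needed and boundedness for $\cB$ is immediate from restriction.
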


\begin{proof} Let $\cC$ be a representation class of $\chrisA$ and $(\pi,v) \in
\cC_\lft$.
Then $I:=\{a \in \chrisA \colon \pi(a)v=0\}$ is a left ideal of $\chrisA$. Let
$\tilde{\pi}$ be the left regular representation of $\chrisA$ on
$V_{\tilde{\pi}}:=\chrisA/I$.
We endow $\chrisA/I$ with the inner product $\langle a+I,b+I\rangle:=
\langle \pi(a)v,\pi(b)v \rangle_{V_\pi}$ so that $\tilde{\pi}$ becomes a
$\ast$-representation.
Let us define $\tilde{v}:=1+I$. Clearly, $\tilde{\pi}(\chrisA)=V_{\tilde{\pi}}$
and $\Vert \tilde{\pi}(a)\tilde{v} \Vert= \Vert a+I \Vert=\Vert \pi(a)v \Vert¡$
for every $a \in \chrisA$.

It remains to show that $\tilde{\pi} \in \cC$ if $\cC$ is one of the classes
$\Pi$, $\cB$.
If $\pi$ is finite-dimensional, then $\tilde{\pi}$ is also finite-dimensional
because 
$\dim \chrisA/\mathrm{Ker}\, \pi <\infty$ and $\mathrm{Ker}\, \pi \subseteq I$
implies $\dim \chrisA/I <\infty$.
If $\pi$ is bounded, then $\tilde{\pi}$ is also bounded because $\Vert
\tilde{\pi}(a)(b+I) \Vert=\Vert ab+I \Vert
= \Vert \pi(ab)v \Vert =\Vert \pi(a) \pi(b)v \Vert \le \vertiii{\pi(a)} \,\Vert
\pi(b)v \Vert
= \vertiii{\pi(a)} \, \Vert b+I \Vert$. 
\end{proof}

\begin{prop}
\label{prop:leftvstwo}
 If $\cC$ is a regular representation class of $\chrisA$ and 
 $I$ is a two-sided ideal in $\chrisA$,  then
\[ 
 \rchard{\cC}{I} \subseteq \rcleft{\cC}{I}.
\]
\end{prop}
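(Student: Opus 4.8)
The plan is to show that if $a \notin \rcleft{\cC}{I}$, then $a \notin \rchard{\cC}{I}$; equivalently, to produce a point $\pi \in \vchard{\cC}{I}$ with $\pi(a) \neq 0$ out of a point $(\sigma, w) \in \vcleft{\cC}{I}$ with $\sigma(a)w \neq 0$. So suppose $a \notin \rcleft{\cC}{I} = \icleft{\cC}{\vcleft{\cC}{I}}$. Then there is a pair $(\sigma, w) \in \cC_\lft$ with $\sigma(\iota)w = 0$ for every $\iota \in I$ but $\sigma(a) w \neq 0$. The key idea is to replace $(\sigma, w)$ by a \emph{cyclic} pair, using the regularity hypothesis: since $\cC$ is regular, there exists $(\pi, v) \in \cC_\lft$ with $\pi(\chrisA)v = V_\pi$ and $\|\pi(b)v\| = \|\sigma(b)w\|$ for every $b \in \chrisA$.

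The norm-preservation identity is what does all the work. Applying it with $b = \iota \in I$ gives $\|\pi(\iota)v\| = \|\sigma(\iota)w\| = 0$, so $\pi(\iota)v = 0$ for every $\iota \in I$; that is, $(\pi,v) \in \vcleft{\cC}{I}$ and $I \subseteq \icleft{\cC}{\pi,v}$. Now invoke the moreover clause of Lemma \ref{lem:ileftx}: because $I$ is a \emph{two-sided} ideal contained in $\icleft{\cC}{\pi,v}$ and $\pi(\chrisA)v = V_\pi$, we get $I \subseteq \ichard{\cC}{\pi}$, i.e.\ $\pi(\iota) = 0$ for every $\iota \in I$. This says precisely $\pi \in \vchard{\cC}{I}$. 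On the other hand, applying the norm identity with $b = a$ gives $\|\pi(a)v\| = \|\sigma(a)w\| \neq 0$, so $\pi(a) \neq 0$. Hence $a \notin \ichard{\cC}{\vchard{\cC}{I}} = \rchard{\cC}{I}$, which is the contrapositive of the desired inclusion.

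The only real subtlety, and the step I would be most careful about, is the passage from the left point $(\sigma,w)$ to the cyclic point $(\pi,v)$: one must check that regularity is being applied correctly and that the two-sidedness of $I$ is genuinely used (it is — it is exactly the hypothesis that lets Lemma \ref{lem:ileftx} upgrade "$\pi(I)v = 0$" to "$\pi(I) = 0$", via $\pi(I)\pi(\chrisA)v = \pi(I)v$-type reasoning combined with cyclicity). Everything else is a routine chain of inclusions, so no lengthy computation is needed; the proof is essentially a two-line argument once the right cyclic representation is in hand, and Proposition \ref{prop:regular} guarantees that $\Pi$, $\cB$, and $\cR$ all satisfy the hypothesis so the conclusion applies to all the representation classes of interest.
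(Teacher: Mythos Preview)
Your proof is correct and follows essentially the same approach as the paper: use regularity to replace the given left point by a cyclic one, then invoke the ``moreover'' clause of Lemma~\ref{lem:ileftx} (two-sidedness of $I$ plus cyclicity) to upgrade $\pi(I)v=0$ to $\pi(I)=0$, and transfer conclusions via the norm identity. The only cosmetic difference is that the paper argues the inclusion directly (start with $b\in\rchard{\cC}{I}$ and show $\pi(b)v=0$ for every $(\pi,v)\in\vcleft{\cC}{I}$), whereas you phrase it as the contrapositive.
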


\begin{proof}
Pick any $b \in \rchard{\cC}{I}$ and any $(\pi,v) \in \cC_\lft$ such that
$\pi(I)v=0$.
Let $(\tilde{\pi},\tilde{v}) \in \cC_\lft$ be such that
$\tilde{\pi}(\chrisA)\tilde{v}=V_{\tilde{\pi}}$ and 
$\Vert \pi(a)v \Vert= \Vert \tilde{\pi}(a) \tilde{v} \Vert$ for every $a \in
\chrisA$.
In particular, $\tilde{\pi}(I)\tilde{v}=0$. Since $I\subset \icleft{\cC}{\tilde{\pi},\tilde{v}}$ is a two-sided ideal, 
it follows from Lemma \ref{lem:ileftx} that $I\subset \ichard{\cC}{\tilde{\pi}}$; i.e.,
$\tilde{\pi}(I)=0.$   Now $b \in
\rchard{\cC}{I}$ implies that
$\tilde{\pi}(b)=0$. Hence, $\tilde{\pi}(b)\tilde{v}=0$ which implies that
$\pi(b)v=0$. This proves that $b \in \rcleft{\cC}{I}$.
\end{proof}

As an illustration of Propositions \ref{prop:leftvstwo} and \ref{prop:regular} 
we give an alternative proof of 
Proposition \ref{thm:mainfindim} \eqref{it:big}.

\begin{proof}
Let $I$ be a two-sided ideal in a $\chrisA=\FA$ such that $\dim \chrisA/I < \infty$. 
By \cite[Lemma 3]{lewin}, $I$ is finitely generated as a left ideal. 
Therefore, $\rr{I}=\rleft{I}=\rhard{I}$ where the first 
equality comes from \cite[Theorem 1.6]{chmn} and the second one 
from Propositions \ref{prop:leftvstwo} and \ref{prop:regular}.
By \cite[Theorem 4.1]{chmn} and the GNS construction, there exists $(\pi,v)\in \Pi_\lft$
such that $\rr{I}=\icleft{\cC}{\pi,v}$ and $\pi(\chrisA)v=V_\pi$. By the moreover
 portion of Lemma \ref{lem:ileftx},  $\icleft{\cC}{\pi,v}=\ichard{\cC}{\pi}$.
\end{proof}

\section{Soft Zeros}
\label{sec:softZeroes}

 A tuple $X$ in $\matof{}{\F}{g}$ is a \df{soft zero} 
 of a polynomial $p$ from $\F \axs$ if 
 $\det p(X)=0.$ 
 Replacing hard with soft zeros in the definitions 
 in Subsection \ref{sec:idealsinFA}
 produces the notions of the soft vanishing set, soft variety and soft radical. 
This also works for a general $\ast$-algebra $\chrisA$ and a general representation class $\cC$ of $\chrisA$.
We say that a ``real point'' $\pi \in \cC$ is a \df{soft zero} of a ``polynomial'' $a \in \chrisA$ 
if $\pi(a)$ is not invertible. Again, we can define the soft vanishing set, soft variety and soft radical.
We choose to work with general $\chrisA$ but only the with the simplest $\cC$, i.e. $\cC=\Pi$, the finite-dimensional $\ast$-representations.

Given a subset $T$ of $\Pi$, the \df{soft vanishing set} of $T$ is 
\[
  \isoft{T} =\{a \in \chrisA \colon \det \pi(a) =0, \mbox{ for all } \pi \in T\}.
\] \i{ $\isoft{T}$}
The set $\isoft{T}$ satisfies $\chrisA \, \isoft{T}  \chrisA \subseteq \isoft{T}$ but in general it is not closed under sums.
(Thus it is not an ideal.) Likewise, given a subset $\cS$ of $\chrisA$, the \df{soft variety} of $\cS$ is
\[
  \vsoft{\cS} = \{\pi \in \Pi \colon \det \pi(a) =0, \mbox{ for all } a \in \cS\}
\] \i{$ \vsoft{\cS}$}
and the \df{soft radical} of $\cS$ is
\[
  \rsoft{\cS} = \isoft{\vsoft{\cS}}.
\]
\i{$\rsoft{\cS}$}

For  $\pi \in \Pi$, it is convenient to abbreviate $\isoft{\{\pi\}}$ by $\isoft{\pi}$.
In subsection \ref{sec:structureSoft}, we describe the structure of $\isoft{\pi}$
and in subsection \ref{sec:softishard} we describe exactly when $\isoft{\pi}=\ihard{\pi}$.
This is used in subsection \ref{sec:softisideal} to characterize when 
$\isoft{\pi}$ is a two-sided ideal.

\subsection{The Structure of $\isoft{\pi}$}
\label{sec:structureSoft}

For a left ideal $I$ of $\chrisA,$ let
\[
\widehat{I} := \{p \in \chrisA \colon \text{ there exists } q \in \chrisA \setminus I \text{ such that } pq \in I\}.
\]

\begin{prop}
\label{prop:isoftx}
For a subset $\mathcal{S}$ of a $\ast$-algebra $\chrisA$ the following are equivalent:
\begin{enumerate}
\item $\mathcal{S}=\isoft{\pi}$ for some finite-dimensional $\ast$-representation $\pi$ of $\chrisA$.
\item $\mathcal{S}=\bigcup_{i=1}^k \widehat{I_i}$ for some $k \in \NN$ and some real left ideals $I_1,\ldots,I_k$ of $\chrisA$
with $\dim \chrisA/I_i < \infty$.
\end{enumerate}
\end{prop}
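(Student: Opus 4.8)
The plan is to reduce both implications to Lemma \ref{lem:ileftx}, the finite-dimensional one-sided Nullstellensatz, by means of a single bookkeeping identity: if $\pi$ is a finite-dimensional $\ast$-representation of $\chrisA$, $v\in V_\pi$ is a \emph{cyclic} vector (meaning $\pi(\chrisA)v=V_\pi$), and $I=\ileft{\pi,v}=\{a\in\chrisA\colon\pi(a)v=0\}$, then $\widehat{I}=\isoft{\pi}$. I would establish this identity first: unwinding the definition of $\widehat{I}$, a polynomial $p$ lies in $\widehat{I}$ exactly when there is $q\in\chrisA$ with $\pi(q)v\neq 0$ and $\pi(p)\bigl(\pi(q)v\bigr)=0$, i.e. when $\pi(p)w=0$ for some nonzero $w$ in the submodule $\pi(\chrisA)v$; when $v$ is cyclic this submodule is all of $V_\pi$, so the condition says precisely that $\pi(p)$ is not injective, equivalently (since $\dim V_\pi<\infty$) that $\det\pi(p)=0$, that is, $p\in\isoft{\pi}$.

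For $(2)\Rightarrow(1)$ I would argue as follows. Given real left ideals $I_1,\dots,I_k$ of $\chrisA$ with $\dim\chrisA/I_i<\infty$, Lemma \ref{lem:ileftx} supplies, for each $i$, a finite-dimensional $\ast$-representation $\pi_i$ together with a cyclic vector $v_i$ such that $I_i=\ileft{\pi_i,v_i}$; the identity above then gives $\widehat{I_i}=\isoft{\pi_i}$. Taking the finite-dimensional $\ast$-representation $\pi:=\bigoplus_{i=1}^k\pi_i$ and using $\det\pi(a)=\prod_i\det\pi_i(a)$, one obtains $\isoft{\pi}=\bigcup_i\isoft{\pi_i}=\bigcup_i\widehat{I_i}=\mathcal S$.

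For $(1)\Rightarrow(2)$ I would decompose. Given a finite-dimensional $\ast$-representation $\pi$, write $V_\pi=\bigoplus_{i=1}^k V_i$ as an orthogonal direct sum of nonzero $\pi$-invariant subspaces, each cyclic with some cyclic vector $v_i\in V_i$; this is available because any invariant subspace $U$ of a $\ast$-representation is reducing (for $w\in U^\perp$, $u\in U$, $a\in\chrisA$ one has $\langle\pi(a)w,u\rangle=\langle w,\pi(a^*)u\rangle=0$, so $\pi(a)w\in U^\perp$), hence one may split off cyclic subrepresentations one at a time, the process terminating since $\dim V_\pi<\infty$. Setting $\pi_i:=\pi|_{V_i}$, Lemma \ref{lem:ileftx} shows $I_i:=\ileft{\pi_i,v_i}$ is a real left ideal with $\dim\chrisA/I_i<\infty$, and $\widehat{I_i}=\isoft{\pi_i}$ by the identity; since $\pi=\bigoplus_i\pi_i$ we again get $\isoft{\pi}=\bigcup_i\isoft{\pi_i}=\bigcup_i\widehat{I_i}$, which is of the desired form. (The trivial case $V_\pi=\{0\}$, $k=0$, corresponds to $\mathcal S=\emptyset$, taking $\pi$ the zero representation, and is consistent with both conditions.)

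The step I expect to require the most care is the identity $\widehat{I}=\isoft{\pi}$ itself: one must notice that $\widehat{I}$ detects only vectors sitting inside the cyclic submodule $\pi(\chrisA)v$, which is exactly why cyclicity of $v$ is the right hypothesis and why Lemma \ref{lem:ileftx}, which identifies the real finite-codimension left ideals precisely as the $\ileft{\pi,v}$ with $v$ cyclic, is the tool to invoke. Once that is in place, the remainder is just the semisimplicity of finite-dimensional $\ast$-representations together with multiplicativity of the determinant over a direct sum.
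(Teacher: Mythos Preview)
Your proof is correct and follows essentially the same route as the paper's: both hinge on the identity $\widehat{\ileft{\pi,v}}=\isoft{\pi}$ for cyclic $v$, invoke Lemma \ref{lem:ileftx} to pass between such ideals and real left ideals of finite codimension, and use multiplicativity of the determinant over direct sums. The only cosmetic difference is that for $(1)\Rightarrow(2)$ the paper decomposes $\pi$ into \emph{irreducible} summands (citing that every nonzero vector in an irreducible $\ast$-representation is cyclic), whereas you split off \emph{cyclic} summands directly; your version is marginally more self-contained but the substance is identical.
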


\begin{proof}
To prove that (1) implies (2), recall that every finite-dimensional $\ast$-representation is a finite direct sum of irreducible $\ast$-representations,
see e.g. \cite[Proposition 9.2.4]{palmer}. Furthermore, if $\pi=\oplus_i \pi$, then clearly $\isoft{\pi}=\bigcup_i \isoft{\pi_i}$.
Therefore, in view of Lemma \ref{lem:ileftx}, it suffices to show that for every irreducible $\ast$-representation $\pi$ of $\chrisA$
and every nonzero $w \in V_\pi$ we have that 
\[
\isoft{\pi}=\widehat{\ileft{\pi,w}}.
\]
Clearly, for each  $p \in \isoft{\pi}$ there exists a nonzero $v \in V_\pi$ such that $\pi(p)v=0$.
We claim that for each nonzero $w \in V_\pi$ there exists $q \in \chrisA$ such that $\pi(q)w=v$.
This claim implies that $\pi(p) \pi(q)w = 0$ and $\pi(q)w \ne 0$, so that $p \in \widehat{\ileft{\pi,w}}$.
We will prove the claim by contradiction. If $v \not\in \pi(\chrisA)w$, then $\pi(\chrisA)w$ is a proper 
nontrivial invariant subspace for $\pi(\chrisA)$. Now, \cite[Proposition 9.2.4]{palmer} implies that $\pi$ is reducible.

Suppose now that (2) is true. By Lemma \ref{lem:ileftx}, every $I_i$ is of the form $\ileft{\pi_i,v_i}$
for some finite-dimensional $\ast$-representation $\pi_i$ and some $v_i \in V_{\pi_i}$ such that $\pi_i(\chrisA)v_i=V_{\pi_i}$.
We claim that $\widehat{I_i}=\isoft{\pi_i}$. Namely, take any $p \in \chrisA$ and recall that $p \in \widehat{I_i}$ if and only if
$pq \in I_i$ for some $q \in \chrisA \setminus I_i$. The latter is true if and only if there exists $q \in \chrisA$ such that 
$\pi_i(q)v_i \ne 0$ and $\pi_i(p) \pi_i(q) v_i=0$ which is true if and only if there exists $w_i \in V_{\pi_i}$
such that $w_i \ne 0$ and $\pi_i(p)w_i=0$. The latter is equivalent to $p \in \isoft{\pi_i}$.
The claim implies (1) since 
$\cup_{i=1}^k \widehat{I_i}= \cup_{i=1}^k \isoft{\pi_i}=\isoft{\oplus_{i=1}^k \pi_i}$.
\end{proof}

\subsection{When $\isoft{\pi}$  Has the Form $\ihard{\psi}$ }
\label{sec:softishard}

\begin{prop}
 \label{lem:softhard}
 For a $\ast$-algebra $\chrisA$ and representation $\pi \in \Pi$ the following are equivalent:
\begin{enumerate}
\item \label{it:softhard1}
       $\isoft{\pi} \subseteq \ihard{\psi}$ for some $\psi \in \Pi$.
\item \label{it:softhard2}
      $\isoft{\pi} = \ihard{\pi}$.
\end{enumerate}
If (2) is true and $\F=\CC$, then $\pi(\chrisA)$ is $\ast$-isomorphic to $\CC$ endowed with the  standard involution.
If (2) is true and $\F=\RR$, then $\pi(\chrisA)$ is $\ast$-isomorphic 
to either $\RR$ or $\CC$ or $\mathbb{H}$ with standard involutions.
\end{prop}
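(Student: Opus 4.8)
The plan is to settle (\ref{it:softhard2})$\Rightarrow$(\ref{it:softhard1}) immediately by taking $\psi=\pi$, and to devote the work to (\ref{it:softhard1})$\Rightarrow$(\ref{it:softhard2}), which will also produce the ``moreover'' statement. Since $\pi(a)=0$ forces $\det\pi(a)=0$, one always has $\ihard{\pi}\subseteq\isoft{\pi}$, so only the reverse inclusion is at issue. I would begin with the structure of $\pi(\chrisA)$: it is a finite-dimensional $\ast$-subalgebra of the formally real $\ast$-algebra $L(V_\pi)$, hence itself a finite-dimensional formally real $\ast$-algebra, so Proposition \ref{structure} gives a $\ast$-isomorphism $\pi(\chrisA)\cong\bigoplus_{i=1}^{k}M_{n_i}(\F_i)$ with $\F_i\in\{\RR,\CC,\HH\}$ (only $\CC$ if $\F=\CC$) and conjugate-transpose involution. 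A routine observation — left and right multiplication on the finite-dimensional algebra $\pi(\chrisA)$ by a $b\in\pi(\chrisA)$ that is invertible in $L(V_\pi)$ are injective, hence bijective, so $b$ is already a unit of $\pi(\chrisA)$ — shows that $\det b\ne 0$ holds exactly when $b$ is a unit of $\pi(\chrisA)$. Consequently $a\in\isoft{\pi}$ if and only if $\pi(a)$ is a non-unit of $\pi(\chrisA)$.

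The crux is to show that (\ref{it:softhard1}) forces $\pi(\chrisA)$ to be a division ring. Arguing by contradiction, if $\pi(\chrisA)$ is not a division ring then, reading off the decomposition $\bigoplus_i M_{n_i}(\F_i)$, it contains an idempotent $f$ with $f\ne 0$ and $f\ne 1$ — either a component unit (if $k\ge 2$) or a matrix unit $E_{11}$ (if some $n_i\ge 2$) — so that $f$ and $1-f$ are both non-units. Choosing $a_0\in\chrisA$ with $\pi(a_0)=f$ and setting $a_1=1-a_0$, both $a_0$ and $a_1$ lie in $\isoft{\pi}$ and $a_0+a_1=1$. But $\isoft{\pi}\subseteq\ihard{\psi}=\operatorname{Ker}\psi$, and $\operatorname{Ker}\psi$ is a two-sided ideal, hence closed under addition, so $1\in\operatorname{Ker}\psi$; this is impossible since $\psi(1)=\operatorname{Id}_{V_\psi}\ne 0$. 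Therefore $\pi(\chrisA)$ has no nontrivial idempotent, which forces $k=1$ and $n_1=1$, i.e. $\pi(\chrisA)$ is $\ast$-isomorphic to $\F_1\in\{\RR,\CC,\HH\}$ (just $\CC$ when $\F=\CC$) with the standard involution — precisely the ``moreover'' assertion.

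It then only remains to conclude that $\isoft{\pi}=\ihard{\pi}$: since the division ring $\pi(\chrisA)$ has no nonzero non-units, $a\in\isoft{\pi}$ gives $\pi(a)=0$, i.e. $a\in\ihard{\pi}$, and with the trivial inclusion the two sets coincide. I expect the main obstacle to be conceptual rather than technical — spotting the identity $1=f+(1-f)$, which exhibits $1$ as a sum of two soft zeros and thereby collapses a supposedly proper kernel; the remaining points (the ``$\det b\ne 0$ versus unit'' equivalence, and extracting a nontrivial idempotent from a non-division semisimple algebra) are standard once Proposition \ref{structure} is available.
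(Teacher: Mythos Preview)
Your proof is correct but takes a different route from the paper's. The paper proves (1)$\Rightarrow$(2) by a short spectral argument that does not invoke Proposition~\ref{structure}: for $a\in\isoft{\pi}$ one has $\det\pi(a^*a)=0$, and if $\lambda\ne 0$ were an eigenvalue of $\pi(a^*a)$ then both $a^*a$ and $a^*a-\lambda$ would lie in $\isoft{\pi}\subseteq\ihard{\psi}$, forcing the nonzero scalar $\lambda$ into $\ker\psi$, a contradiction; hence $\pi(a)^*\pi(a)=0$ and $\pi(a)=0$. Only afterwards does the paper appeal to Proposition~\ref{structure}, via the observation that (2) makes $\pi(\chrisA)$ free of zero-divisors, to obtain the classification of $\pi(\chrisA)$. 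You instead front-load Proposition~\ref{structure}, pass to the semisimple decomposition of $\pi(\chrisA)$, and use the identity $1=f+(1-f)$ for a nontrivial idempotent to place $1$ in the ideal $\ker\psi$. This yields the equivalence and the classification in one stroke, at the price of needing the structure theorem for the implication itself; the paper's argument keeps (1)$\Leftrightarrow$(2) self-contained and only uses structure theory for the ``moreover''. Both approaches tacitly assume $V_\psi\ne\{0\}$ so that $1\notin\ker\psi$.
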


\begin{proof}
First, $\ihard{\pi} \subset \isoft{\pi}$ by definition.
Next, suppose $a \in \isoft{\pi}$, which is equivalent to $\det \pi(a) = 0$.
Then $\det(\pi(a)^*\pi(a)) = 0$ as well, and since $\isoft{\pi} \subset
\ihard{\psi}$, we have $a^*a \in \ihard{\psi}$.  Further, $\pi(a)^*\pi(a)$
cannot have any nonzero eigenvalues $\lambda$ since, if it did, then $\lambda$
would be real,
$a^*a - \lambda \in \isoft{\pi} \subset \ihard{\psi}$, and so $a^*a -
(a^*a - \lambda) = \lambda \in \ihard{\pi}$.   Therefore $\pi(a)^*\pi(a) = 0$,
which implies that $\pi(a) = 0$.  Therefore $\isoft{\pi} \subset
\ihard{\pi}$, which implies that $\isoft{\pi} = \ihard{\pi}$.

Suppose that $\isoft{\pi} = \ihard{\pi}$ for some $\pi \in \Pi$.
Since $\pi(\chrisA)$ is contained in $M_n(\F)$ for some $n$, it is
a finite-dimensional formally real $\ast$-algebra. We claim
that $\pi(\chrisA)$ has no zero-divisors. Namely, if  
$\pi(a)\pi(b)=0$ for some $a,b \in \chrisA$, then either $\det \pi(a)=0$ or $\det \pi(b)=0$
which implies that either $\pi(a)=0$ or $\pi(b)=0$.  
The claims about the structure of $\pi(\chrisA)$ now follow from Proposition \ref{structure}.
\end{proof}

A representation $\pi$ is \df{unitarily equivalent} to representation $\psi$
if there exists a unitary operator $T \colon V_\pi \to V_\psi$ such that 
$\psi(a)=T \pi (a) T^\ast$ for every $a \in \chrisA$.

\begin{cor}
 \label{cor:softhard}
For $\ast$-algebra $\chrisA$ and every irreducible $\pi \in \Pi$ the following are equivalent.
\begin{enumerate}
\item $\isoft{\pi} = \ihard{\pi}$.
\item $\pi(\chrisA)$ has no zero divisors.
\item If $\F=\CC$, then $\pi$ is unitarily equivalent to some $\ast$-representation $\psi \colon \chrisA \to \CC$.
If $\F=\RR$, then $\pi$ is unitarily equivalent to some $\ast$-representation of one of the following types:
(i) $\psi \colon  \chrisA \to \RR$ where $\pi(\chrisA)=\RR,$ (ii) $\psi
\colon \chrisA \to M_2(\RR)$ where $\pi(\chrisA)=\CC$
or (iii) $\psi \colon \chrisA \to M_4(\RR)$ where $\pi(\chrisA)=\HH$.
\end{enumerate}
\end{cor}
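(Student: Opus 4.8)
The plan is to prove the cycle $(1)\Rightarrow(2)\Rightarrow(3)\Rightarrow(1)$, leaning on Propositions \ref{structure} and \ref{lem:softhard}; irreducibility of $\pi$ enters only in $(2)\Rightarrow(3)$. The implication $(1)\Rightarrow(2)$ is the zero-divisor argument already run inside the proof of Proposition \ref{lem:softhard}: if $\pi(a)\pi(b)=0$ then $\det\pi(a)\det\pi(b)=0$, so $a\in\isoft{\pi}$ or $b\in\isoft{\pi}$, and since $\isoft{\pi}=\ihard{\pi}$ this gives $\pi(a)=0$ or $\pi(b)=0$.

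For $(2)\Rightarrow(3)$ I would first determine $\pi(\chrisA)$ as an abstract $\ast$-algebra. Since $\pi$ is a $\ast$-representation, $\pi(\chrisA)$ is a $\ast$-subalgebra of $L(V_\pi)$, hence a finite-dimensional formally real $\ast$-algebra, so Proposition \ref{structure} writes it as a direct sum of simple pieces $M_{n_i}(F_i)$ with conjugate-transpose involution, $F_i\in\{\RR,\CC,\HH\}$ (only $\CC$ if $\F=\CC$). Two or more summands, or a single summand with $n_i\ge2$, would produce zero divisors, so $(2)$ forces $\pi(\chrisA)\cong F_1$: a division $\ast$-algebra carrying the standard involution. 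As $\pi$ is irreducible, $V_\pi$ is a simple left module over the division algebra $\pi(\chrisA)\cong F_1$, hence $V_\pi\cong F_1$ as an $F_1$-module, and $\dim_\F V_\pi=\dim_\F F_1\in\{1,2,4\}$, which is precisely the trichotomy in the statement. A module isomorphism $V_\pi\to F_1$ then conjugates $\pi$ into the left regular representation $F_1\hookrightarrow M_n(\F)$.

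It remains to make that conjugation unitary. Transporting the inner product of $V_\pi$ to $F_1$, the $\ast$-invariance $\langle\pi(a)v,w\rangle=\langle v,\pi(a^*)w\rangle$ forces the inner product on $F_1$ to be of the form $\langle x,y\rangle=\tau(x^*y)$ for a positive $\F$-linear functional $\tau$ on $F_1$, and positivity together with symmetry of the inner product forces $\tau$ to annihilate every $z-z^*$; hence $\langle x,y\rangle$ differs from the standard trace inner product of $F_1$ only by a positive scalar. Composing the module isomorphism with right multiplication by a suitable positive real scalar then makes it unitary onto $\F^n$ with its standard inner product, so $\pi$ is unitarily equivalent to the standard $\ast$-representation $\psi$ of the stated type. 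I expect this last, metric step to be the only point that is not a direct citation of Propositions \ref{structure} and \ref{lem:softhard}, and hence the place to be careful: it is really the statement that a $\ast$-invariant inner product on a formally real division $\ast$-algebra is unique up to a positive scalar and an orthogonal change of coordinates.

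Finally, for $(3)\Rightarrow(1)$: in each case $\psi(\chrisA)=F_1$ is a division algebra inside $M_n(\F)$, so $\psi(a)$ is invertible in $M_n(\F)$ whenever $\psi(a)\ne0$; thus $\det\psi(a)=0$ exactly when $\psi(a)=0$, i.e.\ $\isoft{\psi}=\ihard{\psi}$. A unitary equivalence $\pi\sim\psi$ preserves $\det\pi(a)=\det\psi(a)$ and the condition $\pi(a)=0\iff\psi(a)=0$, so $\isoft{\pi}=\isoft{\psi}=\ihard{\psi}=\ihard{\pi}$, closing the cycle.
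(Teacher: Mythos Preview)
Your proof is correct and follows the same cycle $(1)\Rightarrow(2)\Rightarrow(3)\Rightarrow(1)$ as the paper, with identical arguments for $(1)\Rightarrow(2)$ and $(3)\Rightarrow(1)$. The difference lies in $(2)\Rightarrow(3)$: the paper invokes Burnside's theorem for irreducible subalgebras of $M_n(\CC)$ and its real analogue \cite[Theorem~6]{lz} to pin down $\pi(\chrisA)$ up to similarity inside $M_n(\F)$, then appeals to the standard fact that similar $\ast$-representations are unitarily equivalent (the parenthetical ``clearly'' in the paper hides a polar-decomposition argument). You instead use Proposition~\ref{structure} to classify the formally real $\ast$-algebra $\pi(\chrisA)$ abstractly, observe that absence of zero divisors forces it to be one of $\RR,\CC,\HH$, then argue directly that irreducibility makes $V_\pi$ a one-dimensional $F_1$-space and compute that any $\ast$-invariant inner product on $F_1$ is a positive scalar multiple of the reduced-trace form. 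Your route is longer but entirely self-contained within the paper (no external citation of Burnside or \cite{lz} is needed), and it makes the unitary-equivalence step explicit rather than deferring to a ``clearly.'' Either approach is fine; yours trades an external reference for a short direct computation.
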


\begin{proof}
In the proof of Proposition \ref{lem:softhard} we showed that (1) implies (2). To show that (3) implies (1)
note that every element $A \in \CC \subset M_2(\RR)$ satisfies $\det A=0$ if and only if
 $A=0$ and that every element
$B \in \HH \subset M_4(\RR)$ satifies $\det B=0$ iff $B=0$. Finally, (2) implies (3) by the 
Burnside's theorem for irreducible subalgebras of $M_n(\CC)$ and its real version \cite[Theorem 6]{lz}.
(Clearly, if $\pi$ is similar to $\psi$, then $\pi$ is also unitarily equivalent to $\psi$.)
\end{proof}

%
%

\subsection{When $\rsoft{\{p\}}$ and $\rr{\chrisA p}$ Are Two-sided Ideals}
\label{sec:softisideal}

Given an  element $p$ of a $\ast$-algebra $\chrisA$ let  $J_p$ 
\i{$J_p$} and $I_p$
\i{$I_p$} denote the left and two-sided ideals generated by $p$ respectively. 
We will write $\irr$ for the set of all irreducible finite-dimensional $\ast$-representations of $\chrisA$. Recall that every $\pi \in \Pi$
can be decomposed as an orthogonal sum of finitely many elements from $\irr$. Recall also that $\pi(\chrisA)v=V_\pi$
for every $\pi \in \irr$ and every nonzero $v \in V_\pi.$

\begin{lem}
\label{lem:twojp}
For every element $p$ of a $\ast$-algebra $\chrisA$ we have that
$$\ihard{\vsoft{p} \cap \irr} \subseteq \rleft{J_p}.$$
If $\rleft{J_p}$ is a two-sided ideal, then  the opposite inclusion holds too.
\end{lem}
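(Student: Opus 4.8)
The plan is to prove the two inclusions separately, the first unconditionally and the second under the hypothesis that $\rleft{J_p}$ is two-sided.

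For the inclusion $\ihard{\vsoft{p} \cap \irr} \subseteq \rleft{J_p}$, I would unwind the definition of $\rleft{J_p} = \vleft{J_p}^{\lft\text{-}} = \icleft{\Pi}{\vcleft{\Pi}{J_p}}$. So take $b \in \ihard{\vsoft{p}\cap\irr}$ and a left zero $(\pi,v) \in \vcleft{\Pi}{J_p}$, meaning $\pi(p)v = 0$ (which forces $\pi(a)v = 0$ for all $a \in J_p$). The goal is $\pi(b)v = 0$. Decompose $\pi$ as an orthogonal sum of irreducibles $\pi = \oplus_j \pi_j$ and write $v = \oplus_j v_j$ accordingly. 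For each $j$ with $v_j \neq 0$, we have $\pi_j(p)v_j = 0$, so $\pi_j(p)$ is not invertible, i.e.\ $\pi_j \in \vsoft{p}\cap\irr$; hence $\pi_j(b) = 0$ since $b \in \ihard{\vsoft{p}\cap\irr}$, so $\pi_j(b)v_j = 0$. For $j$ with $v_j = 0$ the contribution is trivially zero. Summing, $\pi(b)v = 0$, giving $b \in \rleft{J_p}$.

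For the reverse inclusion under the hypothesis that $\rleft{J_p}$ is two-sided, take $b \in \rleft{J_p}$; I must show $\pi(b) = 0$ for every $\pi \in \vsoft{p}\cap\irr$. Fix such an irreducible $\pi$. Since $\pi(p)$ is singular, pick a nonzero $v \in V_\pi$ with $\pi(p)v = 0$; then $(\pi,v) \in \vcleft{\Pi}{J_p}$, so $\pi(b)v = 0$ because $b \in \rleft{J_p}$. Now invoke the ``moreover'' part of Lemma~\ref{lem:ileftx}: since $\pi$ is irreducible, $\pi(\chrisA)v = V_\pi$, and $b \in \rleft{J_p} \subseteq \icleft{\Pi}{\pi,v}$ with $\rleft{J_p}$ a two-sided ideal by hypothesis, the lemma yields $\rleft{J_p} \subseteq \ichard{\Pi}{\pi}$, i.e.\ $\pi(b) = 0$. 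Since $\pi \in \vsoft{p}\cap\irr$ was arbitrary, $b \in \ihard{\vsoft{p}\cap\irr}$.

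I expect the first inclusion and the routine reduction to irreducibles to be painless; the crux is recognizing that the ``moreover'' clause of Lemma~\ref{lem:ileftx} is exactly the tool that upgrades the vanishing of $\pi(b)$ on the single vector $v$ to vanishing on all of $V_\pi$ — and that this step is precisely where the two-sidedness of $\rleft{J_p}$ is consumed. One small point to check carefully is that $\pi(\chrisA)v = V_\pi$ genuinely holds for every nonzero $v$ when $\pi$ is irreducible (this is the standard fact recalled just before the lemma), since otherwise $\icleft{\Pi}{\pi,v}$ need not be comparable to $\ichard{\Pi}{\pi}$.
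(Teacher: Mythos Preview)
Your proof is correct and follows essentially the same approach as the paper. The first inclusion is identical (decompose into irreducibles and argue componentwise); for the reverse inclusion the paper carries out inline the short argument that $\pi(rs)v=0$ for all $s$ together with $\pi(\chrisA)v=V_\pi$ forces $\pi(r)=0$, whereas you invoke the ``moreover'' clause of Lemma~\ref{lem:ileftx}, which encapsulates exactly that argument.
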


Recall that $\rleft{J_p}=\rr{J_p}$ if $\chrisA=\F \axs$ by the left Nullstellenatz \cite[Theorem 1.6]{chmn}.

\begin{proof}
Take any $q \in \ihard{\vsoft{p} \cap \irr}$ and any $(\pi,v) \in \Pi_\lft$ such that $\pi(p)v=0$.
Let us decompose $\pi=\pi_1 \oplus \ldots \oplus \pi_k$ where $\pi_i \in \irr$
and $v=v_1 \oplus \ldots \oplus v_k$ where $v_i \in V_{\pi_i}$. It follows that $\pi_i(p)v_i=0$
for every $i=1,\ldots,k$. Therefore, either $\det \pi_i(p)=0$ or $v_i=0$, which implies that
either $\pi_i(q)=0$ or $v_i=0$ for every $i=1,\ldots,k$. Consequently, $\pi_i(q)v_i=0$ for every
$i=1,\ldots,k$, and so, $\pi(q)v=0$. This proves the first part. 

To prove the second part, take any $r \in \rleft{J_p}$ any $\pi \in \vsoft{p} \cap \irr$.
Since $\det \pi(p)=0$, there exists a nonzero $v \in V_\pi$ such that $\pi(p)v=0$.
Since $\rleft{J_p}$ is a two-sided ideal, we have that $rs \in \rleft{J_p}$
for every $s \in \chrisA$. It follows that $\pi(rs)v=0$ for every $s \in \chrisA$.
Since $\pi(\chrisA)v=V_\pi$, it follows that $\pi(r)=0$. Therefore, $r \in \ihard{\vsoft{p} \cap \irr}$.
\end{proof}

\begin{prop}
For $p \in \F \axs$ the following are equivalent:
\begin{enumerate}
\item The ideal $\rr{J_p}$ is two-sided (i.e. $\rr{J_p}=\rr{I_p}$).
\item For every $\pi \in \irr$, $\det \pi(p)=0$ implies $\pi(p)=0$ (i.e. $\vsoft{p} \cap \irr \subseteq \vhard{p}$).
\end{enumerate}
\end{prop}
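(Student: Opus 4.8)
The plan is to reduce everything to Lemma \ref{lem:twojp} together with the left Nullstellensatz \cite[Theorem 1.6]{chmn} recalled above, which gives $\rr{J_p} = \rleft{J_p} = \ileft{\vleft{J_p}}$ when $\chrisA = \F\axs$. First I would record two preliminary observations. Since $J_p = \F\axs\,p$ and $\pi(ap)v = \pi(a)\pi(p)v$, we have $\vleft{J_p} = \{(\pi,v)\in\Pi_\lft \colon \pi(p)v = 0\}$. For the parenthetical in statement (1), note that ``$\rr{J_p}$ is two-sided'' is equivalent to ``$\rr{J_p}=\rr{I_p}$'': from $J_p\subseteq I_p$ one always gets $\rr{J_p}\subseteq\rr{I_p}$, and $\rr{I_p}$ is two-sided by Lemma \ref{lem:zi}\eqref{it:zi2}; conversely, if $\rr{J_p}$ is a real two-sided ideal it contains $p$, hence contains $I_p$ and therefore $\rr{I_p}$, so the two radicals agree.

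For $(1)\Rightarrow(2)$, assume $\rr{J_p} = \rleft{J_p}$ is two-sided. Then the ``if'' clause of Lemma \ref{lem:twojp} upgrades the inclusion there to the equality $\rleft{J_p} = \ihard{\vsoft{p}\cap\irr}$. Since $p \in J_p \subseteq \rleft{J_p}$, this forces $p \in \ihard{\vsoft{p}\cap\irr}$, i.e.\ $\pi(p) = 0$ for every $\pi \in \vsoft{p}\cap\irr$, which is precisely $\vsoft{p}\cap\irr \subseteq \vhard{p}$.

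For $(2)\Rightarrow(1)$, I would prove the reverse inclusion $\rleft{J_p} \subseteq \ihard{\vsoft{p}\cap\irr}$; combined with the unconditional inclusion of Lemma \ref{lem:twojp} this yields $\rleft{J_p} = \ihard{\vsoft{p}\cap\irr}$, and the right-hand side, being a hard vanishing set, is a real $\ast$-ideal and in particular two-sided, whence $\rr{J_p}=\rleft{J_p}$ is two-sided. To establish the inclusion, take $r \in \rleft{J_p} = \ileft{\vleft{J_p}}$ and $\pi \in \vsoft{p}\cap\irr$. Hypothesis (2) gives $\pi \in \vhard{p}$, i.e.\ $\pi(p) = 0$, so $(\pi,v)\in\vleft{J_p}$ for \emph{every} $v \in V_\pi$; hence $\pi(r)v = 0$ for every $v$, i.e.\ $\pi(r) = 0$. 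As $\pi$ ranged over all of $\vsoft{p}\cap\irr$, we conclude $r \in \ihard{\vsoft{p}\cap\irr}$.

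The argument is short; there is no serious obstacle, only bookkeeping. The one point to watch is the direction in which Lemma \ref{lem:twojp} supplies an equality versus a mere inclusion. The conceptual content is that hypothesis (2) is exactly what is needed to pass, at each irreducible soft zero $\pi$ of $p$, from ``$\pi(r)$ annihilates the nonzero vector $v$ witnessing $\det\pi(p)=0$'' to ``$\pi(r)=0$'': the only thing $\vleft{J_p}$ controls a priori is $\pi(r)$ on the degeneracy subspace of $\pi(p)$, and that subspace becomes all of $V_\pi$ precisely when $\pi(p)$ is forced to vanish outright.
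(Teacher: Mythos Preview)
Your proof is correct, and for the implication $(1)\Rightarrow(2)$ it is actually more economical than the paper's. Both arguments start from Lemma~\ref{lem:twojp} and the left Nullstellensatz $\rr{J_p}=\rleft{J_p}$, but the paper takes a detour: it invokes the Real Algorithm \cite[Theorem~3.1]{chmn} to conclude that $\rr{J_p}$ is finitely generated as a left ideal, then Proposition~\ref{thm:mainfindim} to obtain finite codimension and hence $\rr{J_p}=\ihard{\vhard{p}}$, and finally compares this with $\rr{J_p}=\ihard{\vsoft{p}\cap\irr}$ via a chain of variety/ideal closures. You bypass all of this by simply observing that $p$ itself lies in $\rleft{J_p}=\ihard{\vsoft{p}\cap\irr}$, which is the statement $(2)$ verbatim. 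This is a genuine simplification: no finiteness results are needed.

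For $(2)\Rightarrow(1)$ the two arguments are close in spirit. The paper uses $(2)$ as the inclusion $\vsoft{p}\cap\irr\subseteq\vhard{p}$ to chain $\rr{I_p}\subseteq\ihard{\vhard{p}}\subseteq\ihard{\vsoft{p}\cap\irr}\subseteq\rleft{J_p}=\rr{J_p}$, directly yielding $\rr{I_p}=\rr{J_p}$. You instead prove the reverse inclusion $\rleft{J_p}\subseteq\ihard{\vsoft{p}\cap\irr}$ pointwise (using that $\pi(p)=0$ makes every $(\pi,v)$ a left zero of $J_p$), obtain equality with a two-sided ideal, and conclude. Both routes are equally short; yours makes the mechanism (degeneracy subspace becoming all of $V_\pi$) more explicit, while the paper's makes the equivalence of the parenthetical formulations in $(1)$ drop out automatically.
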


\begin{proof}
If $\vsoft{p} \cap \irr \subseteq \vhard{p}$, then clearly $\rr{I_p} \subseteq \ihard{\vhard{p}} \subseteq \ihard{\vsoft{p} \cap \irr}$.
By Lemma \ref{lem:twojp}, we have $\ihard{\vsoft{p} \cap \irr} \subseteq \rleft{J_p}=\rr{J_p}$.
It follows that $\rr{I_p} \subseteq \rr{J_p}$.

Conversely, if $\rr{I_p} = \rr{J_p}$, then $\rr{J_p}$ is a two-sided ideal by Lemma \ref{lem:zi}
and it is finitely generated as a left ideal by the Real Algorithm \cite[Theorem 3,1]{chmn}. Proposition \ref{thm:mainfindim} now implies that
$\rr{J_p}= \ihard{\vhard{p}}$. (Namely, by the first part of Proposition \ref{thm:mainfindim},
$\rr{J_p}$ has finite codimension. Therefore, the assumptions of the second part of Proposition \ref{thm:mainfindim}
are satisfied. ) On the other hand, we have that $\rr{J_p}=\ihard{\vsoft{p} \cap \irr}$ by Lemma \ref{lem:twojp}.
Finally, $$\vsoft{p} \cap \irr \subseteq \vhard{\ihard{\vsoft{p} \cap \irr}}=\vhard{\rr{J_p}}
=\vhard{\ihard{\vhard{p}}}=\vhard{p}$$ as claimed.
\end{proof}

\begin{lem}
\label{lem:twosoft}
For every element $p$ of a $\ast$-algebra $\chrisA$ we have that
$$\rleft{J_p} \subseteq \rsoft{\{p\}} = \isoft{\vsoft{p} \cap \irr}.$$
\end{lem}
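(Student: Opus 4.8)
The plan is to treat the two assertions separately, beginning with the equality $\rsoft{\{p\}} = \isoft{\vsoft{p} \cap \irr}$. One inclusion is immediate: since $\irr \subseteq \Pi$ we have $\vsoft{p} \cap \irr \subseteq \vsoft{p}$, and as $\isoft{\cdot}$ is inclusion-reversing this gives $\rsoft{\{p\}} = \isoft{\vsoft{p}} \subseteq \isoft{\vsoft{p} \cap \irr}$. For the reverse inclusion I would take $a \in \isoft{\vsoft{p} \cap \irr}$ together with an arbitrary $\pi \in \vsoft{p}$, i.e.\ some $\pi \in \Pi$ with $\det \pi(p) = 0$, and decompose $\pi = \pi_1 \oplus \cdots \oplus \pi_k$ as an orthogonal sum of irreducible subrepresentations $\pi_i \in \irr$ (as recalled just before Lemma \ref{lem:twojp}). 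In a basis adapted to this decomposition $\pi(b)$ is block diagonal with diagonal blocks $\pi_i(b)$, so $\det \pi(b) = \prod_{i=1}^k \det \pi_i(b)$ for every $b \in \chrisA$. Applying this to $p$ shows $\det \pi_j(p) = 0$ for some $j$, hence $\pi_j \in \vsoft{p} \cap \irr$ and so $\det \pi_j(a) = 0$; applying it to $a$ then gives $\det \pi(a) = 0$. Thus $a \in \isoft{\vsoft{p}} = \rsoft{\{p\}}$, which completes the equality.

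For the containment $\rleft{J_p} \subseteq \rsoft{\{p\}}$ it suffices, using $\rsoft{\{p\}} = \isoft{\vsoft{p}}$, to fix $r \in \rleft{J_p}$ and $\pi \in \Pi$ with $\det \pi(p) = 0$ and to verify $\det \pi(r) = 0$. Since $\det \pi(p) = 0$ and $V_\pi$ is finite-dimensional, there is a nonzero $v \in V_\pi$ with $\pi(p) v = 0$. The algebra $\chrisA$ is unital, so $J_p = \chrisA p$, and hence $\pi(ap)v = \pi(a)\pi(p)v = 0$ for every $a \in \chrisA$; that is, $J_p \subseteq \ileft{\pi,v}$, equivalently $(\pi, v) \in \vleft{J_p}$. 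As $r$ lies in $\rleft{J_p} = \ileft{\vleft{J_p}}$, this forces $\pi(r) v = 0$, and since $v \neq 0$ we obtain $\det \pi(r) = 0$. Therefore $r \in \isoft{\vsoft{p}} = \rsoft{\{p\}}$, as desired.

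I do not anticipate a real obstacle in either step. The points demanding the most care are the multiplicativity of the determinant across the orthogonal decomposition of a finite-dimensional $\ast$-representation into irreducibles (used for the equality) and keeping track of which representation class---$\irr$ or $\Pi$---each radical is taken over. Both are routine, so assembling the two paragraphs above proves the lemma.
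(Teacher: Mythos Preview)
Your proof is correct and follows essentially the same approach as the paper: both argue the equality via the irreducible decomposition of a finite-dimensional $\ast$-representation and multiplicativity of the determinant on block-diagonal matrices, and both prove the inclusion by picking a nonzero $v$ in $\ker \pi(p)$ and observing $(\pi,v)\in \vleft{J_p}$. Your write-up merely adds a few more details (e.g.\ spelling out $J_p=\chrisA p$ and the membership $(\pi,v)\in \vleft{J_p}$) than the paper's version.
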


\begin{proof}
Since $\vsoft{p} \cap \irr \subseteq \vsoft{p}$, we have that 
$\rsoft{\{p\}} =\isoft{\vsoft{p}} \subseteq \isoft{\vsoft{p} \cap \irr}$.
Conversely, take any $q \in \isoft{\vsoft{p} \cap \irr}$ and any $\pi \in \vsoft{p}$.
Let us decompose $\pi=\pi_1 \oplus \ldots \oplus \pi_k$ where $\pi_i \in \irr$ for all $i$.
Since $\det \pi(p)=0$, it follows that $\det \pi_i(p)=0$ for some $i$.
Therefore, $\det \pi_i(q)=0$, which implies that $\det \pi(q)=0$. 
This proves the equality.

To prove the inclusion 
take any $r \in \rleft{J_p}$ and any $\pi \in \vsoft{p}$.
Since $\det \pi(p)=0$, there exists a nonzero $v \in V_\pi$ such that $\pi(p)v=0$.
It follows that $\pi(r)v=0$ which implies that $\det \pi(r)=0$. Therefore, $r \in \rsoft{\{p\}}$.
\end{proof}

\begin{prop}
\label{prop:twosoft}
For every element $p$ of a $\ast$-algebra $\chrisA$, the following are equivalent.
\begin{enumerate}
\item The set $\rsoft{\{p\}}$ is a two-sided ideal.
\item For every $\pi \in \vsoft{p} \cap \irr$ we have that $\isoft{\pi} = \ihard{\pi}$.
(cf. Corollary \ref{cor:softhard}.)
\item $\rsoft{\{p\}}=\ihard{\vsoft{p} \cap \irr}.$
\item $\rsoft{\{p\}}=\rleft{J_p}$.
\end{enumerate}
\end{prop}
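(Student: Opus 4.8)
The plan is to prove $(2)\Rightarrow(3)\Rightarrow(1)$, $(3)\Rightarrow(4)$, $(4)\Rightarrow(1)$ and $(1)\Rightarrow(3)$ with little effort, and then to do the real work for $(3)\Rightarrow(2)$. The backbone is the chain
\[
\ihard{\vsoft{p}\cap\irr}\ \subseteq\ \rleft{J_p}\ \subseteq\ \rsoft{\{p\}}\ =\ \isoft{\vsoft{p}\cap\irr}\ =\ \bigcap_{\pi\in\vsoft{p}\cap\irr}\isoft{\pi},
\]
in which the two inclusions and the first equality come from Lemmas \ref{lem:twojp} and \ref{lem:twosoft} and the last equality is the definition of $\isoft{\cdot}$ on a set of representations. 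To this add two elementary facts: $\rsoft{\{p\}}$ is always closed under left and right multiplication by $\chrisA$ (the determinant being multiplicative), so it is a two-sided ideal exactly when it is additively closed; and $\ihard{\vsoft{p}\cap\irr}=\bigcap_{\pi\in\vsoft{p}\cap\irr}\ker\pi$ is always a $\ast$-ideal. Granting these, $(2)\Rightarrow(3)$ is immediate since $\rsoft{\{p\}}=\bigcap_\pi\isoft{\pi}=\bigcap_\pi\ihard{\pi}=\ihard{\vsoft{p}\cap\irr}$; $(3)\Rightarrow(1)$ holds because $\ihard{\vsoft{p}\cap\irr}$ is an ideal; $(3)\Rightarrow(4)$ collapses the chain to equalities; and $(4)\Rightarrow(1)$ holds because a left ideal closed under right multiplication is two-sided.

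For $(1)\Rightarrow(3)$, fix $q\in\rsoft{\{p\}}$ and $\pi\in\vsoft{p}\cap\irr$, and let $I_q$ be the two-sided ideal generated by $q$. By $(1)$ we have $I_q\subseteq\rsoft{\{p\}}\subseteq\isoft{\pi}$, so every element of $\pi(I_q)$ is a singular operator on $V_\pi$. Since $\pi$ is irreducible, $\pi(\chrisA)$ is a finite-dimensional simple algebra (it is the image of $\chrisA$ acting faithfully on the simple module $V_\pi$; cf. the proof of Proposition \ref{structure}, where $\pi(\chrisA)$ is formally real, hence semisimple, and then simple by faithfulness); thus the two-sided ideal $\pi(I_q)$ is either $\{0\}$ or all of $\pi(\chrisA)$. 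The latter would put $\operatorname{Id}_{V_\pi}=\pi(1)$ into $\pi(I_q)$, producing $r\in I_q\subseteq\isoft{\pi}$ with $\det\pi(r)=1$, which is absurd. Hence $\pi(q)=0$; as $\pi$ was arbitrary, $\rsoft{\{p\}}\subseteq\ihard{\vsoft{p}\cap\irr}$, and the chain forces $(3)$. This establishes $(1)\Leftrightarrow(3)\Leftrightarrow(4)$.

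The remaining, and principal, step is $(3)\Rightarrow(2)$. By Corollary \ref{cor:softhard} it suffices to show that $\pi(\chrisA)$ has no zero divisors for every $\pi\in\vsoft{p}\cap\irr$. Suppose some such $\pi$ has a zero divisor; then by Proposition \ref{structure} $\pi(\chrisA)$ is a full matrix algebra of size $m\geq 2$ over one of $\RR,\CC,\HH$, so it contains a nonzero singular element, and $\ker\pi$ is a maximal two-sided ideal of $\chrisA$. Under $(3)$ we have $\rsoft{\{p\}}=\ihard{\vsoft{p}\cap\irr}\subseteq\ker\sigma$ for every $\sigma\in\vsoft{p}\cap\irr$, so, to contradict $(3)$, it is enough to produce $q\in\chrisA$ with $\det\sigma(q)=0$ for all $\sigma\in\vsoft{p}\cap\irr$ but $\pi(q)\neq 0$. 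Let $K$ be the intersection of the kernels of the representations in $\vsoft{p}\cap\irr$ other than $\pi$. If $K\not\subseteq\ker\pi$, then maximality of $\ker\pi$ gives $K+\ker\pi=\chrisA$, so some $k\in K$ has $\pi(k)=1$; picking $a$ with $\pi(a)$ a nonzero singular element of $\pi(\chrisA)$ and setting $q:=kak$, one gets $\sigma(q)=0$ for every $\sigma\neq\pi$ in $\vsoft{p}\cap\irr$ and $\pi(q)=\pi(a)\neq 0$ with $\det\pi(q)=0$, the sought contradiction. The genuine obstacle is the opposite case $K\subseteq\ker\pi$, in which $\pi$ is ``absorbed'' by the other representations and no suitable $q$ arises from $K$; here the natural route is to pass to $\chrisA/\rsoft{\{p\}}$ --- in which $p$ becomes $0$, the representations of $\vsoft{p}\cap\irr$ remain a faithful family, and one is reduced to finding a globally singular, $\pi$-nonzero element --- and to combine a descending argument over $\vsoft{p}\cap\irr$ with Proposition \ref{lem:softhard} to exclude $\isoft{\pi}\supsetneq\ihard{\pi}$. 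Carrying out this last case finishes the proof that conditions $(1)$ through $(4)$ are equivalent.
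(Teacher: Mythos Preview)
Your argument for the equivalence of (1), (3) and (4) is correct, and your proof of $(1)\Rightarrow(3)$ via the simplicity of $\pi(\chrisA)$ is a clean alternative to what the paper does. The trouble is entirely in getting back to (2). You attempt $(3)\Rightarrow(2)$, split into two cases according to whether $K:=\bigcap_{\sigma\neq\pi}\ker\sigma$ is contained in $\ker\pi$, and then \emph{do not} finish Case~2: the paragraph ending ``Carrying out this last case finishes the proof'' is a description of a hoped-for strategy, not an argument. Nothing you wrote shows how to produce, in that case, an element $q\in\rsoft{\{p\}}$ with $\pi(q)\neq 0$; indeed under your hypotheses $\rsoft{\{p\}}\subseteq\ker\pi$, so no such $q$ can exist, and your plan of ``finding a globally singular, $\pi$-nonzero element'' is precisely what cannot be done along these lines. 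So the cycle $(1)\Leftrightarrow(2)\Leftrightarrow(3)\Leftrightarrow(4)$ is not closed.

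The paper takes a different route: it proves $\lnot(2)\Rightarrow\lnot(1)$ directly. Given $\pi\in\vsoft{p}\cap\irr$ and $q$ with $\pi(q)$ nonzero but singular, it uses irreducibility and compactness of the unit sphere in $V_\pi$ to build
\[
r=\sum_{i=1}^k r_{v_i}^{\ast}q^{\ast}q\,r_{v_i}
\]
with $\pi(r)$ strictly positive, hence $r\notin\isoft{\pi}\supseteq\rsoft{\{p\}}$, while each summand lies in $\chrisA\,q^{\ast}q\,\chrisA$; since $q\in\rsoft{\{p\}}$ (and $\rsoft{\{p\}}$ is stable under two-sided multiplication), this exhibits elements of $\rsoft{\{p\}}$ whose sum is not in $\rsoft{\{p\}}$, so (1) fails. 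This compactness construction is the piece of technology your proof lacks; your simplicity argument, nice as it is, does not substitute for it because it only yields $(1)\Rightarrow(3)$, not the stronger $(1)\Rightarrow(2)$. If you want to keep your structure, the cleanest repair is to abandon the direct $(3)\Rightarrow(2)$ attempt and instead insert the paper's compactness argument for $\lnot(2)\Rightarrow\lnot(1)$.
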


\begin{proof}
If (2) is false, then there exist $\pi \in \vsoft{p} \cap \irr$ and $q \in \chrisA$
such that $\det \pi(q) =0$ but $\pi(q) \ne 0$. Pick $w \in V_\pi$ such that $\pi(q)w \ne 0$.
For every $v$ in the unit sphere $S_\pi \subset V_\pi$, pick $r_v \in \chrisA$ such that $\pi(r_v)v=w$.
The sets $$U_v := \{u \in S_ \pi \colon \pi(q)\pi(r_v)u \ne 0\}$$ 
are clearly open and they cover $S_\pi$ because $v \in U_v$ for every $v \in S_\pi$. 
Since $V_\pi$ is finite-dimensional, $S_\pi$ is compact.
Pick $v_1,\ldots,v_k \in S_\pi$ such that $S_\pi=U_{v_1} \cup \ldots \cup U_{v_k}$
and consider the element $$r:= \sum_{i=1}^k r_{v_i}^\ast q^\ast q r_{v_i}.$$
By construction, $\langle \pi(r)v,v \rangle =\sum_{i=1}^k \Vert \pi(q) \pi(r_{v_k}) v \Vert^2 >0$
for every $v \in S_\pi$. Therefore $\det \pi(r) \ne 0$, and so $r \not\in \isoft{\vsoft{p} \cap \irr}$.
Since $q \in \rsoft{\{p\}}$ and $r \not\in \rsoft{\{p\}}$, it follows that (1) is false.

If (2) is true, then (3) follows from
$$\rsoft{\{p\}}=\isoft{\vsoft{p} \cap \irr}=
\!\!\bigcap_{\pi \in \vsoft{p} \cap \irr} \isoft{\pi}=
\!\!\bigcap_{\pi \in \vsoft{p} \cap \irr} \ihard{\pi}=\ihard{\vsoft{p} \cap \irr}.$$
If (3) is true, then (4) follows from Lemmas \ref{lem:twojp} and \ref{lem:twosoft}. Namely,
$$\rsoft{\{p\}} =\ihard{\vsoft{p} \cap \irr} \subseteq \rleft{J_p} \subseteq \rsoft{\{p\}}$$
shows that $\rleft{J_p}=\rsoft{\{p\}}$. 

Clearly, $\rsoft{\{p\}}$ is a two-sided semigroup ideal 
w.r.t. multiplication  in $\chrisA$. 
If (4) is true, then it is also a subgroup 
w.r.t. addition  in $\chrisA$.
Hence, (1) is true.
\end{proof}

\begin{rem}
If $\chrisA=\F \axs$, then $\rsoft{\{p\}}$ is a two-sided ideal
if and only if $\rr{J_p}=\rsoft{\{p\}}$ (since $\rleft{J_p}=\rr{J_p}$ by the left Nullstellenatz \cite[Theorem 1.6]{chmn})
if and only if $\rsoft{\{p\}}=\ihard{\psi}$ for some $\psi \in \Pi$ (by the Real Algorithm \cite[Theorem 3,1]{chmn} and Proposition \ref{thm:mainfindim}).
However, $\ihard{\psi}$ may be different from $\isoft{\psi}$ in this case because $\psi$ may not be irreducible.
\end{rem}

\begin{example}
 If $\chrisA = \CC \axs$,
then $\ihard{X} = \isoft{X}$ if and only if $\ihard{X}
= \ihard{(\lambda_1, \ldots, \lambda_g)}$ for some 
$\lambda_1, \ldots, \lambda_g \in \CC$. The polynomial $p$ defined by
\[
 p = \sum_{i=1}^g (x_i - \lambda_i)^*(x_i - \lambda_i) + \sum_{i=1}^g (x_i -
\lambda_i)(x_i - \lambda_i)^*
\]
satisfies $\rr{J_p} = \ihard{(\lambda_1, \ldots,\lambda_g)} = \isoft{(\lambda_1, \ldots,
\lambda_g)}$ and so $\rsoft{\{p\}}$ is a two-sided ideal.
\end{example}

\printindex


\begin{thebibliography}{99}

\bibitem{cim} J. Cimpri\v c,
Formally real involutions on central simple algebras,
Comm. Algebra 36 (2008), no. 1, 165–178.

\bibitem{chmn} J. Cimpri\v c, J. W. Helton, S. McCullough, C. Nelson,
A Non-commutative Real Nullstellensatz Corresponds to a Non-commutative Real Ideal; Algorithms,
to appear in Proc. London Math. Soc., preprint available on Arxiv.

\bibitem{chkmn} J. Cimpri\v c, J. W. Helton, I. Klep, S. McCullough, C. Nelson,
On real one-sided ideals in a free algebra, submitted.
     

\bibitem{c1} J. Cimpri\v c, Real algebraic geometry for matrices over commutative rings,
Journal of Algebra, Volume 359, 1 June 2012, Pages 89-103.


\bibitem{dub} D. W. Dubois, A nullstellensatz for ordered fields, Ark. Mat. 8 (1969), 111--114.

\bibitem{efr} G. Efroymson,  Local reality on algebraic varieties, J. Algebra 29 (1974),
113--142.

\bibitem{jac} N. Jacobson, 
Lectures on quadratic Jordan algebras,
Tata Institute of Fundamental Research Lectures on Mathematics, No. 45,
Tata Institute of Fundamental Research, Bombay, 1969. iv+178+3 pp.


\bibitem{lewis} D. W. Lewis, 
Involutions and anti-automorphisms of algebras,
Bull. London Math. Soc. 38 (2006), no. 4, 529–545.

\bibitem{tm} T. Mora, 
An introduction to commutative and noncommutative Gr\" obner bases, 
Theoretical Computer Science 134 (1994), no. 1, 131--173.

\bibitem{munn} W. D. Munn, 
Involutions on finite-dimensional algebras over real closed fields,
J. Aust. Math. Soc. 77 (2004), no. 1, 123–-128.

\bibitem{ob}
L. Oukhtite, M. Boulagouaz, 
Semisimple algebra with a positive involution. 
Algebras Groups Geom. 22 (2005), no. 2, 233–-240. 

\bibitem{faith} C. Faith, 
\textit{Rings and things and a fine array of twentieth century associative algebra},
Mathematical Surveys and Monographs, 65. American Mathematical Society, 
Providence, RI, 1999. xxxiv+422 pp. ISBN: 0-8218-0993-8.

\bibitem{HMP07} J. W. Helton, S. McCullough and M. Putinar, Strong Majorization
in a Free $\ast$-Algebra. Math. Z. 255 (2007), no. 3, 579-596.

\bibitem{lz} T.K. Lee, Y. Zhou, 
On irreducible and transitive subalgebras in matrix algebras. 
Linear Multilinear Algebra 57 (2009), no. 7, 659–672. 

\bibitem{lewin} J. Lewin, 
Subrings of finite index in finitely generated rings,
J. Algebra 5 (1967) 84–-88.

\bibitem{palmer} T. W. Palmer, 
Banach algebras and the general theory of *-algebras. Vol. 2. $\ast$-algebras,
Encyclopedia of Mathematics and its Applications, 79,
Cambridge University Press, Cambridge, 2001. pp. i–xii and 795–1617. ISBN: 0-521-36638-0.

\bibitem{pop1} S. Popovych, 
On O*-representability and C*-representability of *-algebras,
Houston J. Math. 36 (2010), no. 2, 591–-617.

\bibitem{pop2} S. Popovych,  
Monomial *-algebras and Tapper's conjecture,
Methods Funct. Anal. Topology 8 (2002), no. 1, 70–-74. 

\bibitem{ps} C. Procesi, M. Schacher, 
A non-commutative real Nullstellensatz and Hilbert's 17th problem,
Ann. of Math. (2) 104 (1976), no. 3, 395–-406.


\bibitem{ris} J.-J. Risler,  Une caract\'{e}risation des id\'{e}aux des vari\'{e}t\'{e}s alg\'{e}briques r\'{e}elles. C.R.A.S.
Paris, s\'{e}rie A, 271 (1970), 1171--1173.

\bibitem{rud}
 W. Rudin, \textit{Functional analysis},
 McGraw-Hill Series in Higher Mathematics. McGraw-Hill Book Co., 
 New York-Düsseldorf-Johannesburg, 1973. xiii+397 pp.


\bibitem{sch} K. Schm\" udgen,
\textit{Unbounded Operator Algebras and Representation Theory},
Birkh\" auser Verlag, Basel 1990.

\bibitem{sch1} K. Schm\" udgen, 
A strict Positivstellensatz for the Weyl algebra,
Math. Ann. 331 (2005), no. 4, 779–-794. 

\bibitem{sch2} K. Schm\" udgen,
Noncommutative real algebraic geometry -— some basic concepts and first ideas.
Emerging applications of algebraic geometry, 325–-350,
IMA Vol. Math. Appl., 149, Springer, New York, 2009.


\bibitem{ss} Y. Savchuk, K. Schm\" udgen, 
A noncommutative version of the Fej\' er-Riesz theorem,
Proc. Amer. Math. Soc. 138 (2010), no. 4, 1243–-1248. 

\end{thebibliography}
\end{document}